\documentclass[10pt,psamsfonts]{amsart}
\usepackage{amsmath}
\usepackage{amsthm}
\usepackage{amssymb}
\usepackage{amscd}
\usepackage{amsfonts}
\usepackage{amsbsy}
\usepackage{graphicx}
\usepackage[dvips]{psfrag}
\usepackage{array}
\usepackage{color}
\usepackage{epsfig}
\usepackage{url}
\usepackage{overpic}
\usepackage{tikz-cd}
\usepackage{enumitem}
\usepackage{hyperref}
\usepackage{soul}
\usepackage{faktor}
\usepackage{float}
\usepackage[normalem]{ulem}
\usepackage{nccmath}
\usepackage[foot]{amsaddr}
\usepackage{cleveref}

\usepackage{textcomp}

\newcolumntype{L}{>{\displaystyle}l}
\newcolumntype{C}{>{\displaystyle}c}
\newcolumntype{R}{>{\displaystyle}r}

\newcommand{\R}{\ensuremath{\mathbb{R}}}
\newcommand{\N}{\ensuremath{\mathbb{N}}}

\newcommand{\Z}{\ensuremath{\mathbb{Z}}}
\newcommand{\cc}{\ensuremath{\mathbb{S}}}

\newcommand{\T}{\theta}

\newcommand{\bx}{{\bf x}}

\newcommand{\Id}{\mathrm{Id}}

\newcommand{\x}{\mathbf{x}}

\newcommand{\bP}{\mathbf{P}}

\def\p{\partial}
\def\e{\varepsilon}

\newtheorem {theorem} {Theorem}
\newtheorem {definition} {Definition}

\newtheorem {corollary}{Corollary}
\newtheorem {lemma}{Lemma}

\newtheorem {remark}{Remark}

\newtheorem{wassumption}{Working Assumption}
\newtheorem {mtheorem} {Theorem}

{
	\refstepcounter{wassumption}%
	\renewcommand{\thewassumption}{#1}%
	\begin{wassumption}\label{#2}%
	}
	{
	\end{wassumption}%
}
\makeatletter
\newcommand{\nameditem}[1]{%
	\item[#1]\phantomsection\def\@currentlabel{#1}%
}
\makeatother

\usepackage{mathpazo}

\begin{document}
\renewcommand{\arraystretch}{1.5}

\title[Persistence of Weakly Normally Hyperbolic Invariant Tori]
{Weakly Normally Hyperbolic Invariant Tori:\\  Persistence and an Averaging Principle}

\author[D.D. Novaes and P.C.C.R. Pereira]
{Douglas D. Novaes$^1$  and Pedro C.C.R. Pereira$^{2,\dagger}$}

\address{Universidade Estadual de Campinas (UNICAMP), Instituto de Matemática, Estatística e Computação Científica (IMECC), Departamento de Matemática, Campinas, SP, Brasil$^1$}
\address{Universidade Estadual Paulista (UNESP), Instituto de Biociências, Letras e Ciências Exatas, São José do Rio Preto, SP, Brasil$^2$}
\email{ddnovaes@unicamp.br$^1$}
\email{pccr.pereira@unesp.br$^2$}

\keywords{invariant tori, normally hyperbolic invariant manifolds, singular perturbations, averaging theory, polynomial vector fields}

\subjclass[2020]{34C23, 34C29, 34C45, 37C70}
\thanks{$^\dagger$ Corresponding author}

\begin{abstract}
We prove a persistence theorem for attracting weakly normally hyperbolic invariant tori under small time-periodic perturbations. The theorem extends recent continuation results for weakly normally hyperbolic limit cycles to invariant tori of arbitrary dimension, providing a general analytical framework for their detection. As an application, we establish an averaging principle showing that attracting normally hyperbolic invariant $d$-tori of the averaged system give rise to attracting normally hyperbolic invariant $(d+1)$-tori of the original non-autonomous system. We further introduce a polynomiality-preserving construction that simultaneously lifts the dimensions of the phase space and the attracting normally hyperbolic invariant tori, yielding recursive lower bounds for the maximal number of codimension-$1$ normally hyperbolic invariant tori of polynomial vector fields of a given degree, thereby extending the counting aspect of Hilbert's sixteenth problem to higher-dimensional invariant tori. In particular, we prove that this number grows at least polynomially with the degree and becomes unbounded for invariant tori of higher codimension.
\end{abstract}

\maketitle


\section{Introduction and statement of results}

Identifying attracting sets of a dynamical system allows for the prediction of its long-term behavior, even when the precise internal dynamics remain unresolved. Consequently, the study of these objects is of fundamental importance, especially at the intersection of dynamical systems theory and the applied sciences (see, e.g., \cite{field1974oscillations,hutson1984theorem,taylor1978evolutionarily}). Attracting invariant tori are of particular interest due to their intrinsic connection to periodic and quasi-periodic dynamics (see, e.g., \cite{GREBOGI1985354}). However, because rigorous analytical detection is notoriously difficult, their existence in applied systems is often inferred exclusively through numerical methods (see, e.g., \cite{KUZNETSOV20101676,KKSS19}). 

In this paper, we introduce a new analytical tool to detect invariant $d$-dimensional tori---hereafter simply called $d$-tori---for a class of differential systems. This tool is derived by investigating what is best described as the problem of ``persistence of weakly normally hyperbolic manifolds''. There is no single precise formulation capturing it in its full generality; rather, it comprises a collection of different results dealing with extensions of general theorems on persistence of invariant manifolds to settings where they are not directly applicable due to the singular nature of the family of differential systems. We present a brief historical overview of the problem as it relates to our goal before formalizing it within the specific framework adopted in this work.

Its roots can be traced back to the 1970's, when theorems on persistence of \textit{normally hyperbolic invariant manifolds} under regular perturbations of the vector field were proved independently by Fenichel \cite{Fenichel1971} and Hirsch, Pugh, and Shub \cite{hirschpughshub}. In essence, normal hyperbolicity guarantees that the dynamics normal to the invariant manifold dominate the tangential dynamics, and this property proved to be fundamental in establishing their robustness with respect to perturbations of the system. This persistence property was used to investigate systems with multiple time-scales by Fenichel himself \cite{fenichel1979geometric}, originating the fruitful framework of Geometric Singular Perturbation Theory (see \cite{wechselberger2020geometric} for more details).

Efforts to find analogous persistence results for ``weaker'' forms of normal hyperbolicity then appear to have been initiated by Kopell \cite{Kopell85} (see also \cite[Section 7.3]{wigginsnormally}), motivated by an atmospheric model. However, as pointed out by Chicone and Liu in \cite[Section 3]{MR1740943}, the continuation method originally devised by Kopell contained a gap, which was subsequently rectified in the same reference, albeit only for a specific family of planar vector fields and with the substantial restriction of exclusively considering limit cycles as the invariant manifolds whose persistence can be asserted. The form of this specific family was recently generalized in \cite{PNC23}, whose main achievement was the development of a tool to detect $2$-tori in three-dimensional systems. This tool was employed in \cite{NV26} to analytically confirm the tori observed in \cite{KKSS19}.

As to the problem itself, let us follow the historical development and first discuss the original persistence theorems for regular perturbations. Consider an open set $U \in \R^n$ and suppose you have a perturbative family of differential equations of the form
\begin{equation} \label{system.statement.fenichel}
	\dot x = f(x) + \e h(x,\e),
\end{equation}
with $f$ and $h$ of class $C^1$ over $U \times (-\e_0,\e_0)$, $\e_0>0$. If $M_0$ is a compact normally hyperbolic invariant manifold (see \Cref{sec:NH} for definition) of the unperturbed system $\dot x = f(x)$, then, for each $\e$ sufficiently close to zero, there is a unique compact invariant manifold $M_\e$ of (\ref{system.statement.fenichel}) in a neighbourhood of $M$, which is furthermore normally hyperbolic and diffeomorphic to $M_0$. This is usually expressed by saying that normally hyperbolic invariant manifolds ``persist'' under regular perturbations. Henceforth, this result will referred to as the {\it Regular Persistence Theorem}, for convenience.

In contrast, weak hiperbolicity occurs, for instance, if we are faced with
\begin{equation} \label{system.statement.fenichelweak}
	\dot x = \e f(x) + \e^2 h(x,\e).
\end{equation}
In that case, we cannot immediately identify a compact normally hyperbolic invariant manifold for $\e=0$, because the system is thus reduced to $\dot x =0$. For each $\e \neq0$, the ``unperturbed'' system $\dot x = \e f(x)$ has such a manifold, but it degenerates and loses hyperbolicity as $\e$ tends to zero, being deemed ``weakly hyperbolic". It is evident that (\ref{system.statement.fenichelweak}) can be transformed into (\ref{system.statement.fenichel}) by simply re-scaling time, which then allows the Regular Persistence Theorem to be applied, but the situation can be considerably more complicated if multiple time-scales are introduced.

A natural instance of this problematic multiple time-scales phenomenon occurs when the perturbative terms are time-dependent, i.e., 
\begin{equation} \label{system.statement.timedep}
	\dot x = \e f(x) + \e^2 h(t,x,\e).
\end{equation}
Time re-scaling transforms (\ref{system.statement.timedep}) into 
\begin{equation} \label{system.statement.timedep.rescaled}
	\dot x = f(x) + \e h\left(\frac{t}{\e},x,\e\right),
\end{equation}
a system to which classical persistence theory cannot be applied, because the perturbation term becomes singular at $\e=0$. Still, one may sensibly inquire whether persistence of invariant manifolds can still be established for \eqref{system.statement.timedep.rescaled}, which would in turn guarantee invariant manifolds for \eqref{system.statement.timedep}. This is essentially the problem we will analyze in this paper, under the provision that $h$ is time-periodic. 

The introduction of non-autonomous perturbations has another consequence worth highlighting: no more are we justified in looking for ``static'' invariant manifolds of (\ref{system.statement.timedep})---they must generally be time-dependent. To visualize this object, we will frequently introduce time as an extra variable, studying instead invariant manifolds of the corresponding autonomous system
\begin{equation} \label{system.statement.timedep.ext}
	\begin{aligned}
		&\dot s = 1, \\
		&\dot x = \e f(x) + \e^2 h\left(s,x,\e\right).
	\end{aligned}
\end{equation}
The phase space of this extended differential system will be referred to as the ``extended phase space'' of (\ref{system.statement.timedep}), for convenience. 

Periodicity in time also allows us to see $s$ as an angular variable, i.e., to consider (\ref{system.statement.timedep.ext}) to be defined in $\mathbb{S}^1 \times U$. With that in mind, it is easy to see that ``time-periodic'' invariant manifolds of (\ref{system.statement.timedep}) correspond to compact invariant manifolds of (\ref{system.statement.timedep.ext}). We will always take into account this periodic extended phase space when discussing persistence of invariant manifolds. 

\subsection{Persistence of weakly normally hyperbolic $d$-tori} \label{sec:intro.wnhim}
The main result of this paper concerns the solution of the problem described above in the case of $d$-tori, generalizing the findings of \cite{MR1740943, PNC23} for limit cycles in the plane. More precisely, it provides sufficient conditions for the persistence of attracting weakly normally hyperbolic invariant $d$-tori under small time-periodic perturbations. Its precise statement is as follows.

\begin{mtheorem} \label{thm.main.wnhim}
	Let $\e_0>0$ and $\ell,n,r,r_H,d \in \mathbb{N}^*$ be such that $r\geq 3$, $r_H \leq r$, and $d <n$. Consider the system
	\begin{equation} \label{eq:thm.main1}
		\dot x = \e^\ell f(x) + \e^{\ell+1} h(t,x,\e), \quad (t,x,\e) \in \R \times U \times (-\e_0,\e_0),
	\end{equation}
	where $f$ and $h$ are of class $C^{r}$ and $U \subset \R^n$ is an open bounded set. Assume that the following hypotheses are valid:
	\begin{enumerate}[label*=$(H.\arabic*)$]
		\item \label{hyp.1-thm1} There is $T>0$ such that $h(t+T,x,\e) = h(t,x,\e)$ for all $(t,x,\e)\in \R \times U \times (-\e_0,\e_0)$;
		\item \label{hyp.2-thm1} $\dot z = f(z)$ admits an attracting $r_H$-normally hyperbolic invariant $d$-torus $\mathcal{T}^d_0$ of class $C^3$ with trivial normal bundle.
	\end{enumerate}
	Then, there is $\e_* \in (0,\e_0)$ such that, for each $\e \in (0,\e_*)$, the extended system 
	\begin{equation} \label{eq:thm.main2}
		\begin{aligned}
		&\dot \tau = 1, \quad
		&\dot x = \e^\ell f(x) + \e^{\ell+1} h(\tau,x,\e), \quad (\tau,x) \in \R / ( \mathbb{Z} T) \times U,
		\end{aligned}
	\end{equation}
	admits an attracting $r_H$-normally hyperbolic invariant $(d+1)$-torus $\mathcal{T}^{d+1}_\e$ with trivial normal bundle. The following also hold:  
	\begin{enumerate} [label*=$(\roman*)$]
		\item \label{item.thm.main.wnhim.smooth}Each $\mathcal{T}^{d+1}_\e$ is of class $C^{r_H}$;
		\item \label{item.thm.main.wnhim.F}For each $\e \in (0,\e_*)$, there is $\mathcal{F}_\e: \R \times \R^d \to \cc^1 \times \R^n$ of class $C^1$ such that $$\mathcal{T}^{d+1}_\e = \{(\tau \mod T,\mathcal{F}_\e(\tau,\theta)): (\tau,\theta) \in \R \times \R^d \};$$
		\item \label{item.thm.main.wnhim.periodic}Each $\mathcal{F}_\e$ is $T$-periodic in $\tau$ and $1$-periodic in each entry of $\theta$;
		\item \label{item.thm.main.wnhim.Fconvergence}$\mathcal{F}_\e$ converges in the $C^1$ norm to $(\tau,\theta) \mapsto A(\theta)$ as $\e \to 0^+$, where $A:\R^d \to \R^n$ is a parametrization of $\mathcal{T}_0^d$;
		\item \label{item.thm.main.wnhim.Tconvergence}$\mathcal{T}^{d+1}_\e$ converges in the $C^1$ topology to $\R / ( \mathbb{Z} T) \times \mathcal{T}_0^d$ as $\e \to 0^+$.
	\end{enumerate}
\end{mtheorem}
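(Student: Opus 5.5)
We reduce the singular problem to a regular perturbation by near-identity averaging, then apply the Regular Persistence Theorem to a rescaled autonomous system.

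\textbf{Step 1 (rescaling).} We rescale time by $s=\e^\ell t$. System \eqref{eq:thm.main1} becomes
\[
x'=f(x)+\e h(s/\e^\ell,x,\e).
\]
The perturbation is small in $C^0$ but oscillates with period $\e^\ell T$, so it is not $C^1$-small in the extended phase space. For this reason we work in the original time, where the vector field is $\e^\ell f+\e^{\ell+1}h$ and $\tau$ runs over a circle of fixed length $T$.

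\textbf{Step 2 (averaging change of variables).} We seek $x=y+\e^{\ell+1}w(\tau,y,\e)$ with $w$ $T$-periodic in $\tau$. We choose
\[
\partial_\tau w=h-\bar h,\qquad \bar h(y,\e)=\frac1T\int_0^T h(\tau,y,\e)\,d\tau .
\]
This gives $w$ of class $C^{r}$ with bounded derivatives. The transformed system reads
\[
\dot y=\e^\ell f(y)+\e^{\ell+1}\bar h(y,\e)+\e^{2\ell+1}R(\tau,y,\e),
\]
where $R$ is $C^{r-1}$ and $T$-periodic. Since $\ell\ge1$, we have $2\ell+1\ge \ell+2$. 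Rescaling time by $\e^\ell$ (now harmlessly, because we regard $\tau$ as a slow angle $\sigma=\e^\ell\tau$ only through the stroboscopic map) we obtain
\[
y'=f(y)+\e\bar h(y,\e)+\e^{\ell+1}R(\sigma/\e^\ell,y,\e).
\]
The remaining difficulty is the last term.

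\textbf{Step 3 (persistence).} We use the time-$T$ (Poincaré) map $P_\e$ of the $y$-system rather than the flow. The map $P_\e$ equals the time-$\e^\ell T$ flow map of $f+\e\bar h$, composed with an error that is $O(\e^{2\ell+1})$ in $C^1$; the $C^1$ control uses variational equations over the short interval of length $T$. The time-$\e^\ell T$ map $\phi_\e$ of the autonomous field $f+\e\bar h$ has, by the Regular Persistence Theorem applied to the flow, an attracting normally hyperbolic invariant $d$-torus $\mathcal T_\e^d$ close to $\mathcal T_0^d$. Its normal contraction rate is $e^{-\e^\ell T\lambda}$, that is, $1-O(\e^\ell)$, and its rate gap is likewise $O(\e^\ell)$. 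This is the weak hyperbolicity.

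\textbf{Main obstacle.} The key step is a quantitative persistence theorem for maps, in the Hadamard graph-transform form. It states that an invariant graph over $\mathcal T^d_\e$ with Lipschitz constant in a fixed small cone persists under a $C^1$ perturbation of size $\delta$, provided that $\delta\ll$ the gap. Here the gap is $\e^\ell$ and $\delta=\e^{2\ell+1}$, so the condition holds. We would carry out the graph transform in tubular coordinates $(\theta,\rho)$ given by the trivial normal bundle, where
\[
\rho\mapsto (1-\e^\ell T\lambda)\rho+O(\e^{\ell}|\rho|^2)+O(\e^{2\ell+1}).
\]
The contraction constants are then uniform after division by $\e^\ell$, and this yields an invariant graph $\rho=g_\e(\theta)$ with $\|g_\e\|_{C^1}=O(\e)$. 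The $r_H$-normal hyperbolicity and the $C^{r_H}$ smoothness follow from the same rate estimates, since the ratios of rates are preserved to leading order.

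\textbf{Step 4 (conclusion).} Sweeping the invariant torus of $P_\e$ by the flow over $\tau\in[0,T]$ and undoing the change of variables produces $\mathcal F_\e(\tau,\theta)=A(\theta)+O(\e)$ in $C^1$. This gives items (ii)–(v), and the trivial normal bundle is inherited from that of $\mathcal T_0^d$.
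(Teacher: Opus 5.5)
Your route is different from the paper's. You pass to the stroboscopic map and aim for a quantitative graph transform, with a gap of order $\varepsilon^\ell$ against a perturbation of order $o(\varepsilon^\ell)$. The paper never uses the time-$T$ map. Its argument runs as follows:
\begin{enumerate}
\item It embeds the equation in the two-parameter family $\dot x=\delta^\ell f(x)+\varepsilon^{\ell+1}h(t,x,\varepsilon)$ with $\varepsilon\le\delta$.
\item In tubular coordinates it obtains integral manifolds $\mathbf h=\sigma_{\delta,\varepsilon}(t,\theta)$ with $\|\sigma_{\delta,\varepsilon}\|_{C^1}\to0$ uniformly in $\delta$. This comes from Henry's Lyapunov--Perron theorem (Theorem~\ref{thm.henry} and Lemma~\ref{lemma.sigma.tdiff}), which works directly with dichotomy constants $K\ge1$.
\item Only then does it prove normal hyperbolicity (Theorem~\ref{thm:GC}). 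It compares the linearized flows with those of $(1,f)$ on windows $[-S,S]$ of fixed rescaled length, chains the estimates by submultiplicativity, and finally sets $\varepsilon=\delta$.
\end{enumerate}
Your plan can be made to work. However, the step you yourself single out as the main obstacle is not established, and the sketch you give for it fails as written.

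\textbf{The gap.} Your one-step form $\rho\mapsto(1-\varepsilon^\ell T\lambda)\rho+\cdots$ presupposes two bounds at every point. The normal part of $DP_\varepsilon$ must have norm at most $1-c\,\varepsilon^\ell$, and $DP_\varepsilon^{-1}$ on tangent directions must have norm at most $1+c'\varepsilon^\ell$, with $c>r_Hc'$.
\begin{itemize}
\item In the tubular or Euclidean metric this is false in general. Hypothesis $(H.2)$ only gives $\|B_t\|\le K_Be^{-\rho_Nt}$ and $\|A_t\|\le K_Ae^{\rho_M|t|}$, with constants that cannot in general be taken equal to $1$. The one-step factors are then of size $K_Be^{-\varepsilon^\ell T\rho_N}>1$.
\item Already for a planar hyperbolic limit cycle, the normal coefficient in $\dot\rho=\varepsilon^\ell H(\theta)\rho+\cdots$ is negative only on average. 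It can be positive on an arc, where $\rho\mapsto(1+\varepsilon^\ell TH(\theta))\rho$ expands.
\item In that case the graph transform is not a $C^0$ contraction and cone invariance is unclear. Your claim that ``the ratios of rates are preserved to leading order'' is exactly what Theorem~\ref{thm:GC} has to prove, and it has no one-step justification.
\item The classical persistence theorem cannot simply be quoted either. Its admissible perturbation size depends on the map and shrinks to zero as $P_\varepsilon\to\mathrm{Id}$.
\end{itemize}

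\textbf{How to repair it.} One option is an adapted (Lyapunov) metric in which the infinitesimal normal and tangential rates are within a prescribed $\eta$ of $-\rho_N$ and $\rho_M$. A simpler option, in the spirit of the paper's windows, is to replace $P_\varepsilon$ by $P_\varepsilon^{k}$ with $k=\lceil S/(\varepsilon^\ell T)\rceil$.
\begin{itemize}
\item By Gr\"onwall in rescaled time, $P_\varepsilon^k$ is $O(\varepsilon)$-close in $C^1$, near $\mathcal T^d_0$, to the time-$S$ map of $f$.
\item For large $S$ that map is $r_H$-normally hyperbolic at $\mathcal T^d_0$, because $K_Be^{-\rho_NS}\bigl(K_Ae^{\rho_MS}\bigr)^{r_H}<1$. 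So the classical persistence theorem for diffeomorphisms applies without any quantitative refinement.
\item $P_\varepsilon$ commutes with $P_\varepsilon^k$ and is $C^1$-close to the identity. Hence $P_\varepsilon(\mathcal M_\varepsilon)$ is again $P_\varepsilon^k$-invariant and close to $\mathcal T^d_0$, and uniqueness gives $P_\varepsilon(\mathcal M_\varepsilon)=\mathcal M_\varepsilon$.
\item Normal hyperbolicity of the swept $(d+1)$-torus then follows by submultiplicativity over blocks of $k$ periods, as in Theorem~\ref{thm:GC}.
\end{itemize}
Done this way, your approach needs only classical persistence for diffeomorphisms. It avoids the two-parameter embedding, Henry's theorem and Lemma~\ref{lemma.sigma.tdiff}. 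The paper's approach instead yields a continuation theorem valid for any compact (relatively or absolutely) normally hyperbolic manifold, once a $C^1$-uniform family is known a priori.

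\textbf{Two smaller points.} First, the averaging step is unnecessary. Without it, the time-$T$ map already differs from the time-$\varepsilon^\ell T$ map of $f$ by $O(\varepsilon^{\ell+1})=o(\varepsilon^\ell)$ in $C^1$, which is all your count needs. Second, sweeping does not give a parametrization that is $T$-periodic in $\tau$. The map $(\tau,\theta)\mapsto\Phi_{\tau,0}(\gamma_\varepsilon(\theta))$, with $\Phi_{\tau,0}$ the flow from time $0$ to $\tau$ and $\gamma_\varepsilon$ a parametrization of the invariant torus of $P_\varepsilon$, fails periodicity for fixed $\theta$ because $P_\varepsilon$ moves points along the torus. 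For items (ii)--(iii) you must rewrite each slice $\{\tau=\mathrm{const}\}$ as a graph over $\mathcal T^d_0$ in tubular coordinates. This is possible because the slices are $C^1$-close to $\mathcal T^d_0$.
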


A few remarks are warranted at this point. Although the concept of $r_H$-normal hyperbolicity is formally defined only in \Cref{sec:NH}, it intuitively dictates that the tangential dynamics on the invariant manifold are dominated (with degree $r_H$) by the normal dynamics. Throughout this paper, unless qualified otherwise, this term refers to what is usually named absolute normal hyperbolicity in the literature. Conversely, its relative counterpart---necessary for a continuation result in \Cref{sec:Continuation.General.Result} that applies to both concepts---will always be explicitly qualified.

Additionally, the usual notion of hyperbolicity of a limit cycle---which is assumed in the results found in \cite{MR1740943, PNC23} that are generalized by \Cref{thm.main.wnhim}---coincides with its $r_H$-normal hyperbolicity for any degree. Still in regard to the hypotheses, we highlight the following remarks, as they will be useful later.
\begin{remark} \label{rm:codimension1}
	Every $d$-torus embedded in $\R^{d+1}$ has trivial normal bundle due to orientability---see, e.g., \cite[Section 2, Exercise 18]{guillemin1974differential}. Hence, this part of \ref{hyp.2-thm1} need not be checked if $n=d+1$.
\end{remark}
\begin{remark}\label{rm:curves}
	 Normal bundles of \textbf{closed curves} (1-tori) embedded in $\R^n$, $n \geq 2$, are also trivial---see \cite[\textit{Theorem 7.2 (a)}]{kosinski1993differential} for the case $n\geq3$ and the Remark above for $n=2$.
\end{remark}

Finally, \Cref{thm.main.wnhim} can also be seen as a tool to reduce the problem of detecting invariant $(d+1)$-tori for systems of the form \eqref{eq:thm.main2} to the detection of $d$-tori for the simpler $\dot z = f(z)$. This tool can be made much more general if combined with the so-called method of averaging, as will be explained in \Cref{sec:IntroAveraging}. It can in particular be used to detect invariant $d$-tori in autonomous systems of differential equations, as demonstrated in \Cref{sec:IntroNHIT}.

\subsection{An averaging principle for invariant $d$-tori} \label{sec:IntroAveraging}
As mentioned, the original motivation for this paper is the analytical detection of attracting sets. Specifically, we set out to investigate the application of the method of averaging to locate invariant $d$-tori of non-autonomous differential equations. This section details this problem and establishes its connection with the weakly normally hyperbolic invariant tori discussed in the preceding section.

The roots of the method of averaging (also known as averaging theory) lie in 18th-century celestial mechanics, with the development of tools to study perturbations of the two-body problem by Clairaut, Lagrange and Laplace (for more details, the reader is referred to \cite[Appendix A]{verhulstsanders}). The objective was to isolate the ``secular'' terms of the perturbation, i.e., those related to long-term changes, from those causing only rapid fluctuations. If the perturbation terms are periodic, which is a natural requirement in celestial mechanics, it turns out that the first order secular terms correspond to the average of the perturbation over the period.  

In the following centuries, further development of the theory built upon those original methods to create a powerful set of tools to analyze non-autonomous differential systems (see, for instance, \cite{BM,Guckenheimer1983,verhulstsanders}), most prominently providing asymptotic approximations to their solutions via autonomous systems obtained by averaging the original systems over time. More germane to the specific context of this paper is the fact that averaging theory also allows us to obtain qualitative information about the behavior of non-autonomous systems, such as the existence of periodic solutions and integral manifolds. We present an overview of the relevant methods in what follows.

We start with a family of differential equations in the so-called \textit{standard form} for the application of averaging:
\begin{equation} \label{system.standardform.intro}
	\dot x = \sum_{i=1}^N \e^i F_i(t,x) + \e^{N+1} \tilde{F}_{N+1}(t,x,\e), \quad (t,x,\e) \in \R \times D \times (-\e_0,\e_0).
\end{equation}
where $N \in \mathbb{N}^*$, $\e_0>0$, and $D \subset \R^n$ is a bounded open set. Each function appearing on the right-hand side of (\ref{system.standardform.intro}) is assumed to be of class $C^r$ and $T$-periodic in time, with $r>0$ and $T>0$. The parameter $\e$ is perturbative, and thus usually seen as small. \Cref{system.standardform.intro} can thus be seen as a asymptotic expansion in $\e$ with coefficients depending on $(t,x)$.

A fundamental lemma of the theory of averaging (see, for instance, \cite[Lemma 2.9.1]{verhulstsanders}) guarantees the existence of a $T$-periodic change of variables 
\begin{equation} \label{eq:AvgCofV}
	x = y + \sum_{i=1}^N \e^i u_i(t,y)
\end{equation}
that transforms (\ref{system.standardform.intro}) into
\begin{equation}\label{system.standardform.intro.avg}
	\dot y = \sum_{i=1}^N \e^i g_i(y) + \e^{N+1} R_{N+1} (t,y,\e).
\end{equation}
Effectively, time dependence is pushed to $\mathcal{O}(\e^{N+1})$ with this transformation, and each new time-independent coefficient $g_i$ is named the \textit{averaged function} of $i$-th order. It is known (see, e.g., \cite{Novaes21b}) that $g_i$ is of class $C^{r-i+1}$ and that $R_{N+1}$ is of class $C^{r-N}$. Also, justifying the name given to the method, the first averaged function is the time average of $F_1$ over its period, i.e.,
\begin{equation*}
	g_1 (y) = \frac{1}{T} \int_0^T F_1(t,y)\, dt.
\end{equation*}

The established procedure is then to study the first non-trivial order of (\ref{system.standardform.intro.avg}) as an approximation of the non-autonomous system. Accordingly, let $\ell \in \{1,\ldots, N\}$ be the first index $i$ for which $g_i$ does not vanish identically. We will refer to $\dot z = g_\ell(z)$ as the \textit{guiding system} of (\ref{system.standardform.intro}). A classic result (see, for instance, \cite[Theorem 4.1.1]{Guckenheimer1983} or \cite{LNT14}) guarantees that each \textit{simple equilibrium point} of the guiding system corresponds to a \textit{periodic solution} of (\ref{system.standardform.intro}). This result appeared already, albeit in a restricted form, in the works of Krylov and Bogoliubov in the 1930's (see \cite{Kryloff1935}), but was further generalized in many directions by different authors. 

It acquires an elegant geometric significance if, as in \Cref{sec:intro.wnhim}, we consider the extended phase space of (\ref{system.standardform.intro}), i.e., the phase space $\mathbb{S}^1 \times D$ of
\begin{equation} \label{system.standardform.intro.ext}
	\begin{aligned}
		&\dot \tau =1, \quad
		& \dot x = \sum_{i=1}^N \e^i F_i(\tau,x) + \e^{N+1} \tilde{F}_{N+1}(\tau,x,\e).
	\end{aligned}
\end{equation}
In this setting, simple equilibria of the guiding system correspond to \textit{limit cycles} of (\ref{system.standardform.intro.ext}). This correspondence becomes even more intriguing if we take into account that, under certain conditions, \textit{limit cycles} of the guiding system correspond to \textit{invariant tori} in the extended phase space.
 
The original proof of this result---which was restricted to first order averaging only---seems to be due to Bogoliubov and Mitropolsky \cite{BM}. A generalization of the class of systems encompassed by this initial theorem was then proved by Hale \cite{hale61}, still restricted to first order averaging. More recently, it has been extended to include higher order averaging, as well as considerations of normal hyperbolicity of the resulting invariant tori in some restricted cases (see \cite{NP24,PNC23}). 

The question that sparked the development of this paper is thus if this correspondence can be generally extended to higher dimensional invariant tori. More precisely, provided that we find an invariant $d$-torus in the phase space of the guiding system of \eqref{system.standardform.intro}, are there sufficient conditions under which a corresponding invariant $(d+1)$-torus in the extended phase space of this system can be guaranteed to exist? It is easy to see, by comparing (\ref{system.standardform.intro.avg}) with (\ref{eq:thm.main1}), that this question is closely related to the already discussed persistence of weakly normally hyperbolic tori. Consequently, a corollary of \Cref{thm.main.wnhim} is the following affirmative answer.

\begin{mtheorem} \label{thm.main.avg}
	Assume $r-\ell \geq 3$ and let $d,r_H \in \mathbb{N}^*$ be such that $r_H \leq r- \ell$. If the guiding system $\dot z = g_\ell (z)$ of (\ref{system.standardform.intro}) has an attracting $r_H$-normally hyperbolic invariant $d$-torus $\mathcal{T}^d_0$ at least of class $C^{3}$ and with trivial normal bundle, then, for sufficiently small values of $\e>0$, (\ref{system.standardform.intro}) has an attracting $r_H$-normally hyperbolic invariant $(d+1)$-torus $\mathcal{T}^{d+1}_\e$ with trivial normal bundle in its extended phase space, which is, in particular, of class $C^{r_H}$. Moreover, $\mathcal{T}^{d+1}_\e$ converges to $\cc^1 \times \mathcal{T}^d_0$ as $\e \to 0^+$.
\end{mtheorem}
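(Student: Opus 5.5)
The plan is to reduce \Cref{thm.main.avg} to \Cref{thm.main.wnhim} through the averaging change of variables \eqref{eq:AvgCofV}, and then transport the invariant torus back to the original coordinates.

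First, apply the averaging lemma to \eqref{system.standardform.intro}. This gives the $T$-periodic near-identity transformation $x = y + \sum_{i=1}^N \e^i u_i(t,y)$, which turns the system into \eqref{system.standardform.intro.avg}. By definition of $\ell$, the functions $g_1,\dots,g_{\ell-1}$ vanish identically, so the transformed system can be written as
\[
\dot y = \e^\ell g_\ell(y) + \e^{\ell+1} h(t,y,\e), \qquad h(t,y,\e) := \sum_{i=\ell+1}^{N} \e^{i-\ell-1} g_i(y) + \e^{N-\ell} R_{N+1}(t,y,\e).
\]
Here $h$ is $T$-periodic in $t$, because each $g_i$ is autonomous and $R_{N+1}$ is $T$-periodic. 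This is exactly the form \eqref{eq:thm.main1} with $f = g_\ell$.

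Next comes the regularity bookkeeping. We know $g_\ell \in C^{r-\ell+1}$, each $g_i \in C^{r-i+1}$ with $i \le N$, and $R_{N+1} \in C^{r-N}$. Hence $h$ is only guaranteed to be of class $C^{r-N}$, which may fall below $r-\ell$. I would handle this by applying the averaging lemma with $N$ replaced by $\ell$, truncating the expansion and absorbing the higher-order terms $\e^{i}F_i$, $i>\ell$, into the remainder. Then $R_{\ell+1}\in C^{r-\ell}$, so $f,h$ are of class $C^{r-\ell}$ with $r-\ell\ge 3$. \Cref{thm.main.wnhim} is then applied with $r$ replaced by $r-\ell \ge 3$ and $r_H \le r-\ell$. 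One point to check is that the $\e$-dependence of $R_{\ell+1}$ is regular enough on a domain $U$ with $\mathcal{T}_0^d \subset U \Subset D$; I would shrink the domain to a bounded neighbourhood of the torus so that the change of variables is a diffeomorphism onto its image for small $\e$. Hypothesis \ref{hyp.2-thm1} is exactly the assumption on the guiding system. So for $\e\in(0,\e_*)$ the extended transformed system has an attracting $r_H$-normally hyperbolic invariant $(d+1)$-torus $\widetilde{\mathcal T}^{d+1}_\e$ with trivial normal bundle, of class $C^{r_H}$, and it $C^1$-converges to $\cc^1\times\mathcal T_0^d$.

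Finally, pull the torus back. The map $\Phi_\e(\tau,y) = (\tau, y+\sum_{i\le \ell}\e^i u_i(\tau,y))$ is well defined on $\cc^1\times U$ by $T$-periodicity of the $u_i$. It conjugates the two extended flows, and it is $C^1$-close to the identity, of class $C^{r-\ell+1}$ or at least $C^{r_H}$ given the regularity of the $u_i$. Set $\mathcal T^{d+1}_\e := \Phi_\e(\widetilde{\mathcal T}^{d+1}_\e)$. This set is invariant and is a $C^{r_H}$ torus. Its normal bundle is trivial, since a diffeomorphism carries a trivialization of the normal bundle to one of the image. It converges to $\cc^1\times\mathcal T_0^d$ because $\Phi_\e\to\Id$ in $C^1$.

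The main obstacle is showing that attractivity and $r_H$-normal hyperbolicity are preserved under $\Phi_\e$. The issue is that the hyperbolicity rates are weak: they are of order $\e^\ell$ in original time, while $\Phi_\e$ is only $\e$-close to the identity. I would argue that normal hyperbolicity in the sense of \Cref{sec:NH} is formulated through exponential dichotomy and growth-rate inequalities for the linearized flow. Conjugating by a diffeomorphism $D\Phi_\e$ that is uniformly bounded, with uniformly bounded inverse, changes the growth estimates only by multiplicative constants and leaves the exponential rates untouched. The splitting is transported by $D\Phi_\e$, so the same rate inequalities, and hence the same degree $r_H$ and the attracting property, hold for $\mathcal T^{d+1}_\e$.
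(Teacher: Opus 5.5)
Your proposal is correct and follows the same route as the paper. You bring \eqref{system.standardform.intro} to the form \eqref{system.standardform.intro.avg} via the $T$-periodic averaging transformation, apply \Cref{thm.main.wnhim} with $f=g_\ell$, and transport the resulting torus back through the extended change of variables. You also make explicit two points the paper leaves implicit: truncating the expansion at $N=\ell$ so that the remainder is of class $C^{r-\ell}$, which is exactly what the hypotheses $r-\ell\geq 3$ and $r_H\leq r-\ell$ are tailored to; and the observation that, for each fixed $\e$, conjugation by $\Phi_\e$ alters only the multiplicative constants and not the exponential rates, so attractivity and $r_H$-normal hyperbolicity are preserved.
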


\subsection{Ocurrence of normally hyperbolic invariant $d$-tori in polynomial systems} \label{sec:IntroNHIT}

The final topic explored in this paper concerns the number of normally hyperbolic invariant tori in polynomial differential systems. Besides being an interesting problem in itself, it demonstrates that the aforementioned results are applicable to settings that, at first glance, appear entirely unrelated to the perturbative non-autonomous systems introduced thus far.

Let ${\bf P} = (P_1,\ldots, P_{d+1}):\R^{d+1} \to \R^{d+1}$ be such that each $P_i$ is a polynomial function, and define $\deg({\bf P}) = \max \{ \deg(P_i): i =1,\ldots,d+1\}$. Denoting by $\tau_{r}^d({\bf P})$ the number of $r$-normally hyperbolic invariant $d$-tori of the differential system ${\bf \dot x} = {\bf P}({\bf x})$, we define, for each $m \in \N$,
\begin{equation*}	N_r^d(m) : = \sup\{\tau_r^d({\bf P}): \deg({\bf P}) \leq m\}.
\end{equation*}
If finite, $N^d_r(m)$ represents the maximum number of $r$-normally hyperbolic codimension-$1$ invariant tori that can appear in the phase space of a $(d+1)$-dimensional polynomial differential system of degree $m$. We restrict our attention to codimension-$1$ invariant tori, since, as we will show, the corresponding number is unbounded for invariant tori of higher codimension. Of course, since every $r$-normally hyperbolic invariant manifold is, in particular, $r'$-normally hyperbolic for $r'\leq r$, it follows that 
\begin{equation} \label{eq:relationNrNr'}
	r ' \leq r \implies N^d_{r'}(m) \geq N_r^d (m).
\end{equation}

Investigating whether $N_r^d(m)$ is finite can be seen as an extension of the second part of Hilbert's 16th problem (see, e.g., \cite{Ilyashenko02}), which is concerned with the maximum number $H(m)$ of limit cycles of polynomial differential systems of degree $m$ in the plane. Since each limit cycle is in particular an invariant 1-torus, and considering that the maximum number of limit cycles has been shown to be achievable using only hyperbolic limit cycles \cite{gasull2025note}, it is easy to see that 
\begin{equation} \label{eq:ineqN1H}
	N_r^1(m) \geq H(m),
\end{equation}
for any $r \geq 1$. Hence, any upper bound of $N_r^1(m)$ is also an upper bound of $H(m)$.

As a contribution to this problem, we prove that, if $r \geq 3$, the following relation between the numbers $N_r^d$ for different values of $d$ can be established: \begin{equation}\label{Nhd}
	N_r^d(m) \geq N_r^{d-1} \left(\left\lfloor \frac{m}{2} \right\rfloor - 1\right).
\end{equation}
We also investigate the asymptotic growth of $N^{d+1}_r(m)$ with respect to $m$, proving it has to grow at least as fast as $m^{d+1}$. More precisely, we show that
\begin{equation}\label{liminf}
	\liminf_{m \to + \infty} \frac{N_r^d(m)}{m^{d+1}} \geq  \frac{4}{(2^{d+1} +d - 2)^{d+1}}.
\end{equation}
Those results generalize the findings of \cite{NP25}, which were mostly restricted to 3-dimensional systems. 

Estimates \eqref{Nhd} and \eqref{liminf} are corollaries of \Cref{thm:app}, itself a direct consequence of \Cref{thm.main.avg}. It provides a mechanism to construct a family of $(d+1)$-dimensional differential systems with a prescribed number of $r$-normally hyperbolic attracting invariant $d$-tori from a given $d$-dimensional differential system possessing at least the same number of $r$-normally hyperbolic attracting invariant $(d-1)$-tori. The key feature of this construction, which allows \eqref{Nhd} and \eqref{liminf} to be deduced, is that it preserves polynomiality: the resulting family of $(d+1)$-dimensional differential system is polynomial of degree $2q+2$ whenever the original $d$-dimensional system is polynomial of degree $q \in \N$. 

\begin{mtheorem}\label{thm:app}
	Let $1 \leq d \leq n-1$ and $r_H$ be positive integers, and $P:\R \times \R^{n-1} \to\R$ and $Q:\R \times \R^{n-1} \to\R^{n-1}$ be smooth functions. Assume that the $n$-dimensional differential system
	\begin{equation}\label{eq:2dpde}
		\begin{cases}
			\dot u=P(u,V),\\
			\dot V=Q(u,V),
		\end{cases}
	\end{equation}
 	has an attracting $r$-normally hyperbolic $d$-tori $\mathcal{T}^d_0$ of class $C^3$ with trivial normal bundle, contained in a bounded region $$K=(0,b)\times U \subset \R \times \R^{n-1},$$ and consider the following one parameter family of $(n+1)$-dimensional differential systems:
	\begin{equation}\label{eq:3dpde}
		\begin{cases}
			\dot x=-y,\\
			\dot y=x+\e\,y\, P(x^2+y^2,Z),\\
			\dot Z=2\,\e\,y^2 Q(x^2+y^2,Z),
		\end{cases} \qquad (x,y,Z)\in\R\times\R\times\R^{n-1}.
	\end{equation}
Then, there exists $\bar \e>0$ such that the differential system \eqref{eq:3dpde} has an attracting $r$-normally hyperbolic invariant $(d+1)$-torus $\mathcal{T}^{d+1}_{\e}$ with trivial normal bundle for every $\e\in(0,\bar\e)$. Moreover, $\mathcal{T}^{d+1}_{\e}$ converges to $\mathbb{S}^1\times \mathcal{T}^{d}_{0}$ as $\e$ goes to $0$.
\end{mtheorem}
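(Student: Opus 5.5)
The plan is to pass to polar-type coordinates and use the angle as the new time. This turns \eqref{eq:3dpde} into a periodic system in the standard form \eqref{system.standardform.intro}, and \Cref{thm.main.avg} can then be applied with $\ell=1$. Write $x=\sqrt{u}\cos\theta$, $y=\sqrt{u}\sin\theta$, with $u>0$. From $\dot x=-y$ and $\dot y = x+\e y P$ one gets
\begin{equation*}
\dot u = 2(x\dot x+y\dot y)=2\e\, u\sin^2\theta\, P(u,Z), \qquad \dot\theta=\frac{x\dot y-y\dot x}{u}=1+\e\sin\theta\cos\theta\, P(u,Z).
\end{equation*}
Since $y^2=u\sin^2\theta$, the last equation becomes $\dot Z = 2\e\, u\sin^2\theta\, Q(u,Z)$.

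For $|\e|$ small, $\dot\theta>0$ on the compact closure of a neighbourhood of $\overline K$, so we may take $\theta$ as the independent variable. This gives
\begin{equation*}
\frac{du}{d\theta}=\e\, 2u\sin^2\theta\, P(u,Z)+\e^2\tilde F_u(\theta,u,Z,\e),\qquad \frac{dZ}{d\theta}=\e\, 2u\sin^2\theta\, Q(u,Z)+\e^2\tilde F_Z(\theta,u,Z,\e).
\end{equation*}
This is $2\pi$-periodic in $\theta$ and smooth on a bounded open set $D\supset \overline{\mathcal T^d_0}$ with $D\subset\{u>0\}$. Such a $D$ exists because $\mathcal T^d_0$ is compact and lies in $K$. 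Averaging $2\sin^2\theta$ over $[0,2\pi]$ gives $1$, so the first averaged function is
\[
g_1(u,Z)=\bigl(uP(u,Z),\,uQ(u,Z)\bigr),
\]
which is not identically zero. The guiding system is therefore $\dot z = g_1(z)$, and on $D$ it is the original system \eqref{eq:2dpde} reparametrized in time by the positive smooth factor $u$.

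The first key step, and the one I expect to be the main obstacle, is checking that $\mathcal T^d_0$ remains an attracting $r$-normally hyperbolic invariant torus for the guiding system. The torus is compact and invariant, it has the same trivial normal bundle, and attraction is clearly preserved. The issue is normal hyperbolicity under multiplication of the field by a positive function $\mu=u$ bounded between positive constants on a neighbourhood of the torus.

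I would argue through Fenichel's characterization via generalized Lyapunov-type numbers, or equivalently via the invariant splitting and exponential rate estimates. The time change $t\mapsto \int\mu$ is bi-Lipschitz along orbits and preserves the invariant splitting up to a bounded shear along the flow direction, which lies inside $T\mathcal T^d_0$. The key point is that the normal and tangential rates are rescaled by the same orbitwise time change, so the asymptotic dominance ratios that define $r$-normal hyperbolicity are preserved. If \Cref{sec:NH} provides a formulation in terms of such rates, I would verify this there directly. Failing that, I would use the fact that the inequalities are strict and open to absorb the bounded distortion.

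With this in hand, \Cref{thm.main.avg} applies with $\ell=1$ and $r$ as large as needed, since everything is smooth. It yields, for small $\e>0$, an attracting $r$-normally hyperbolic invariant $(d+1)$-torus with trivial normal bundle in the extended space $\cc^1_\theta\times D$, converging to $\cc^1\times\mathcal T^d_0$.

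Finally, I transport this torus back to the original variables. The map $(\theta,u,Z)\mapsto(\sqrt u\cos\theta,\sqrt u\sin\theta,Z)$ is a diffeomorphism from $\cc^1\times D$ onto its image, because $u$ is bounded away from $0$ on $D$. The autonomous system in $(\theta,u,Z)$, with $\dot\theta=1$, is the pullback of \eqref{eq:3dpde} divided by the positive factor $\dot\theta=1+O(\e)$. Hence the image torus $\mathcal T^{d+1}_\e$ is invariant for \eqref{eq:3dpde} and has trivial normal bundle. It is again attracting and $r$-normally hyperbolic by the same reparametrization argument as above, now even easier since the factor is $C^1$-close to $1$. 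Its convergence to (the embedded copy of) $\cc^1\times\mathcal T^d_0$ follows from the convergence in \Cref{thm.main.avg} and the continuity of the diffeomorphism.
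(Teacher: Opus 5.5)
Your route is the paper's. The paper also uses $\theta$ as time, obtains the guiding field $(\tfrac12 rP(r^2,Z),\,r^2Q(r^2,Z))$, passes to $\rho=r^2$ to get exactly your $g_1=(\rho P,\rho Q)$, and invokes \Cref{thm.main.avg}.

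\textbf{The gap is the step you flagged.} Your orbitwise argument only shows that \emph{relative} normal hyperbolicity survives multiplying the field by $u$. That works because $\sigma(p)$ is computed along a single orbit, where both rates are rescaled by the same time change. The notion used throughout is \emph{absolute}: \Cref{thm.main.avg} needs it through the uniform condition $\alpha>\beta$ of \Cref{thm.henry}, and the conclusion asserts it. Absolute normal hyperbolicity compares the worst tangential rate $\rho_M$ anywhere on the torus with the worst normal rate $\rho_N$ anywhere, and multiplying by $u$ rescales these at different points by different factors. Your fallback of absorbing a bounded distortion by openness fails for the same reason. The factor $u_{\max}/u_{\min}$, the ratio of the extreme values of $u$ on $\mathcal{T}^d_0$, need not be close to $1$. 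What one can prove uniformly is normal rate $\geq u_{\min}\rho_N$ and tangential rate $\leq u_{\max}\rho_M$ (the shear along the flow obeys the same bound). That suffices only if $u_{\min}\rho_N>r\,u_{\max}\rho_M$. Your final step, dividing by $\dot\theta=1+O(\e)$, is fine, since a multiplicative distortion $1+O(\e)$ preserves a strict inequality between rates.

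The paper does not close this gap either. It settles the step in one line (``by re-scaling time to divide the right-hand side by $\rho$''), relying on the remark in \Cref{sec:NH} that normal hyperbolicity is invariant under time reparametrization. That remark is false for the absolute notion, as the following example shows.
\begin{itemize}
\item Take $n=2$, $d=1$, and let $\mathcal{T}^1_0$ be the circle of radius $3$ centred at $(u,V)=(5,0)$. Give it uniform radial contraction rate $3$ and angular motion $\dot\phi=-\sin\phi$, with $\phi$ measured from $(8,0)$. It is absolutely $2$-normally hyperbolic, with $\rho_M=1$ and $\rho_N=3$.
\item For the field $u\,(P,Q)$, the (tangential, normal) eigenvalues are $(2,-6)$ at $(2,0)$ and $(-8,-24)$ at $(8,0)$. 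This forces $\rho_M\geq 8>6\geq\rho_N$.
\item Along \eqref{eq:3dpde}, with $u=x^2+y^2$, one has $(\dot u,\dot Z)=2\e y^2(P,Q)(u,Z)$. Hence $\{(x,y,Z):(x^2+y^2,Z)\in\mathcal{T}^d_0\}$ is invariant for every $\e$; by the local uniqueness in \Cref{thm.henry}, it is the torus the construction yields.
\item In the example this torus contains the periodic orbits $\{x^2+y^2=2,\ Z=0\}$ and $\{x^2+y^2=8,\ Z=0\}$. Their (tangential, normal) Floquet exponents are $(2\e,-6\e)$ and $(-8\e,-24\e)$, so the torus is not even absolutely $1$-normally hyperbolic.
\end{itemize}
The step therefore cannot be bridged as stated. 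You need an additional hypothesis, such as absolute $r$-normal hyperbolicity of $\mathcal{T}^d_0$ for the field $u\,(P,Q)$ itself, or the margin condition $u_{\min}\rho_N>r\,u_{\max}\rho_M$ above.
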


\subsection{Structure of the paper}
The paper is organized as follows. \Cref{sec:NH} establishes the necessary terminology and recalls standard properties of normally hyperbolic invariant manifolds. \Cref{sec:Continuation.General.Result} contains the proof of a general technical result concerning continuation of weakly normally hyperbolic manifolds, while \Cref{sec:existingresultshenry} reviews an important theorem on the existence of invariant manifolds in a class of singularly perturbed systems. The conclusions of these two sections are then employed in \Cref{sec:proofthmA} to prove \Cref{thm.main.wnhim}. Next, \Cref{sec:proofsBC} provides the proofs of Theorems \ref{thm.main.avg} and \ref{thm:app}. Finally, \Cref{sec:ToriPoly} applies the machinery developed in the preceding sections to investigate normally hyperbolic invariant $d$-tori in polynomial systems.

\section{Relative and absolute normal hyperbolicity} \label{sec:NH}
Until now, we have mentioned normal hyperbolicity without formally defining it. Since the concept frequently appears with different meanings in the literature - Hirsch, Pugh and Shub, which apparently originated the term, provided no less than four definitions (see \cite{hirschpughshub}) - we find it appropriate to explicitly state the notions of normal hyperbolicity that we will adopt. For convenience, we will only define normal hyperbolicity for attracting manifolds, as those are the only ones effectively considered in this paper.

Let $n,r \in \mathbb{N}^*$ and $\phi_t$ be the flow of the $C^r$ differential system $\dot x = F(x)$ in $\R^n$ having a compact connected invariant manifold $M$ of class $C^1$. If $N$ is the normal bundle with respect to the standard metric on $\R^n$, the tangent bundle of $\R^n$ over $M$ admits a continuous splitting
\begin{equation*}
	T\R^n |_M = TM \oplus N.
\end{equation*}
Denote by $\Pi$ the orthogonal projection of $T\R^n|_M$ onto $N$. Following Fenichel (see \cite{Fenichel1971} or \cite{wigginsnormally}), for each $t \in \R$ and $p \in M$, define the operators $A_t(p): T_p M \to T_{\phi_{-t}(p)}M$ and $B_t(p): N_{\phi_{-t}(p)} \to N_p$ by 

\begin{equation} \label{eq:defABFenichel}
	\begin{aligned}
		& A_t(p) : = D\left(\phi_{-t}|_M\right) (p) \quad \text{and} \quad B_t(p) : = \Pi \cdot D\phi_{t} (\phi_{-t}(p))|_N.
	\end{aligned}
\end{equation}
They are designed to trace, respectively, variations of vectors tangent and transversal to $M$, so that a comparison can be established. 

 We begin by defining \textbf{absolute} normal hyperbolicity, which is of greater significance in the context of this paper. We will mimic the homonymous notion introduced for diffeomorphisms in \cite{hirschpughshub} (see also \cite{Eldering2013Normally}), by requiring uniform control of the rates of contraction on the manifold. More precisely, we assume that there are $K_A,K_B>1$ and $\rho_M,\rho_N>0$ such that
\begin{itemize}
	\item $\| A_t(p) \| \leq K_A e^{\,\rho_M |t|}$, for all $t \in \R$ and all $p \in M$; 
	\item $\|B_t(p)\| \leq K_B e^{-\rho_N t }$, for all $t\geq0$ and all $p \in M$.
\end{itemize}
We then ask that the contraction along the normal direction is $r$-times stronger than the tangential one, i.e.,
\begin{definition} \label{def:AbsNH}
	Let $r \geq1$. $M$ is \textbf{absolutely} $r$-normally hyperbolic if $\rho_N > r \rho_M$.
\end{definition}
\noindent Incidentally, we remark that the requirement of compactness of $M$ can be relaxed if one introduces uniform estimates in its place \cite{Eldering2013Normally}.

In contrast with \Cref{def:AbsNH}, \textbf{relative} normal hyperbolicity---adopted, for example, by Fenichel in \cite{Fenichel1971} (see also \cite{hirschpughshub})---requires domination of the normal contraction over its tangential counterpart \textit{for each individual choice of orbit} on the manifold $M$.  This can be achieved in terms of the so-called generalized Lyapunov type numbers (see \cite{Fenichel1971,wigginsnormally}): 
\begin{equation*} \label{eq:defLyapunovtypenumbers}
	\begin{aligned}
		&\nu(p) : = \inf \left\{a\geq0: \frac{\| B_t(p)\| }{a^t} \to 0 \; \text{as} \; t \to \infty\right\}, \\[2pt]
		&\sigma(p) : = \inf \left\{s\geq0: \|A_t(p)\| \, \| B_t(p)\|^s \to 0 \; \text{as} \; t \to \infty\right\},
	\end{aligned}
\end{equation*}
where $p \in M$.
\begin{definition} \label{def:RelNH}
	$M$ is \textbf{relatively} $r$-normally hyperbolic if $\nu(p)<1$ and $\sigma(p) < 1/r$ for all $p \in M$.
\end{definition}

As in the absolute case, the number $\nu(p)$ is used only to ensure that normal vectors contract. However, $\sigma(p)$ provides a comparison between normal and tangential contraction rates for the orbit starting at $p$. Hence, relative normal hyperbolicity may apply even if the normal contraction rate along one orbit does not dominate the tangential rate along another. By contrast, the absolute notion certainly implies the relative one.

A few remarks are in order concerning \Cref{def:AbsNH,def:RelNH}. The first is that these definitions are independent of the metric in $\R^n$, provided that the splitting and the projection are also taken with respect to chosen metric (see, e.g., \cite{Fenichel1971,hirschpughshub}). Furthermore, both properties are invariant under coordinate transformation and time reparametrization (see, e.g., \cite{Arakaki2026}). Finally, since the vector field $F$ is of class $C^r$, if the $C^1$ manifold $M$ is (absolutely or relatively) $r$-normally hyperbolic, it is known that $M$ must be \textit{a fortiori} of class $C^r$ (see, e.g., \cite[Theorem 4.1]{hirschpughshub}).

The weaker notion of relative normal hyperbolicity is crucial due to its sufficiency for persistence of the invariant manifold under $C^1$ perturbations of the vector field, i.e.,
\begin{theorem}[\cite{Fenichel1971}] \label{thm:fenichelper}
	If $M$ is relatively $r$-normally hyperbolic, then, for any $C^r$ vector field $G$ that is sufficiently $C^1$-close to $F$, there is a compact manifold $M_G$ of class $C^r$ that is invariant under $G$, $C^r$-diffeomorphic to $M$, and converges $C^1$ to $M$ as $G \to F$.
\end{theorem}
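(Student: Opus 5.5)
This is Fenichel's persistence theorem, cited from \cite{Fenichel1971}. The plan is therefore to follow the classical graph-transform argument rather than derive it from anything earlier in the paper.

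The first step is to set up coordinates. Since $M$ is compact and $C^1$, I would fix a smooth approximating manifold $\tilde M$ that is $C^1$-close to $M$, together with a tubular neighbourhood $\mathcal{U}$ of $\tilde M$ carrying coordinates $(m,\xi)$, where $m\in\tilde M$ and $\xi$ lies in a normal fibre. In these coordinates $M$ is the graph of a $C^1$-small section $\sigma_0$. For $G$ $C^1$-close to $F$, the time-$t_0$ maps $\phi^G_{t_0}$ and $\phi^F_{t_0}$ are $C^1$-close on $\mathcal{U}$, for a fixed large $t_0$.

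Next, relative normal hyperbolicity with $\nu<1$ and $\sigma<1/r$ is converted into uniform estimates. Using compactness and a standard adapted-metric or large-time argument, I would choose $t_0$ so that, at every $p\in M$, $\|B_{t_0}(p)\|<\lambda<1$ and $\|A_{t_0}(p)\|\,\|B_{t_0}(p)\|^{k}<\lambda$ for $1\le k\le r$. These inequalities are open conditions, so they survive on a neighbourhood of $M$ and under $C^1$-small perturbation of the flow.

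The main construction is the graph transform. Let $\Sigma$ be the space of Lipschitz sections $\sigma$ of the normal disc bundle with small $C^0$ norm and Lipschitz constant at most $\delta$. Define $\Gamma_G(\sigma)$ by $\operatorname{graph}\Gamma_G(\sigma)=\phi^G_{t_0}(\operatorname{graph}\sigma)$; equivalently, since $M$ is attracting, use the forward map. To show $\Gamma_G$ is well defined, I would argue that the base component $m\mapsto\pi\phi^G_{t_0}(m,\sigma(m))$ is a $C^1$-small perturbation of the diffeomorphism $\phi^F_{t_0}|_M$, hence itself a diffeomorphism of $\tilde M$. The normal contraction $\lambda$ then gives $\Gamma_G(\Sigma)\subset\Sigma$ and makes $\Gamma_G$ a $C^0$ contraction. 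Its fixed point $\sigma_G$ yields an invariant Lipschitz manifold $M_G$, and invariance under the whole flow follows because the flow commutes with $\phi^G_{t_0}$ and the fixed point is unique.

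\textbf{The main obstacle is regularity.} I would handle it with the fibre contraction theorem on the bundle of candidate tangent planes. On graphs of linear maps $L:T_mM\to N$, the induced map is roughly $L\mapsto B\,L\,A$. The condition $\|A\|\|B\|<1$ makes this a fibre contraction, so $\sigma_G$ is $C^1$. Iterating on $k$-jets, the condition $\|A\|^k\|B\|<1$ (from $\sigma<1/r$) gives $C^k$ for $k\le r$. This is exactly where the pointwise, relative condition rather than the absolute one must be carefully used, via uniformity over orbits through compactness.

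Finally, as $G\to F$ in $C^1$, the contraction constants are uniform. The fixed points $\sigma_G$ therefore converge to $\sigma_0$ in $C^0$, and the fibre-contraction fixed points converge as well, giving $C^1$ convergence. The map $m\mapsto(m,\sigma_G(m))$ composed with the diffeomorphism between $\tilde M$ and $M$ provides the required $C^r$-diffeomorphism between $M_G$ and $M$.
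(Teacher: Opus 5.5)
The paper gives no proof of this statement; it quotes it from Fenichel \cite{Fenichel1971}. Your outline is the classical argument behind that citation: uniformity over the compact $M$ to fix one time $t_0$, a forward graph transform on Lipschitz sections, fibre contraction on $k$-jets for $C^r$ regularity, and continuity of the fixed points in $G$. So the approach is sound, apart from one slip. The uniform estimate you extract in the second step should read $\|A_{t_0}(p)\|^k\,\|B_{t_0}(p)\|<\lambda$ for $1\le k\le r$: this is the form you actually use in the regularity step, and it is what follows from $\sigma(p)<1/r$, whereas $\|A\|\,\|B\|^k$ with $\|B\|<1$ is already implied by the case $k=1$. It is also this $k=1$ rate condition, not the normal contraction alone, that gives $\Gamma_G(\Sigma)\subset\Sigma$.
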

\noindent In \cite{hirschpughshub}, an equivalent result is proved for a manifold invariant under the action of a diffeomorphism, in which case relative normal hyperbolicity is proved to be not only sufficient, but also necessary for persistence \cite{Ma1978}.

Due to its prevalence in the results proved in this paper, we will generally refer to absolute normal hyperbolicity simply as normal hyperbolicity, omitting further qualification. We chose to introduce relative normal hyperbolicity not only as a means of contrasting the absolute notion, but also because \Cref{thm:GC}, a technical result that formalizes the continuation method introduced in \cite{Kopell85} to a broad class of systems, encompasses both notions.

\subsection{Verifying normal hyperbolicity}
Verifying absolute or relative normal hyperbolicity can prove itself challenging if one works directly with their definition. In this section, we present alternative definitions, which will be helpful in \Cref{sec:Continuation.General.Result}, as well as a classic result concerning an uniform bound for the generalized Lyapunov type numbers, known as Fenichel's Uniformity Lemma. More importantly, by analogy with the relative case, we define a third generalized Lyapunov type number to facilitate the verification of absolute normal hyperbolicity.

We begin with the following lemma, concerning the calculation of the numbers $\nu$ and $\sigma$ appearing in the definition of relative normal hyperbolicity. It is a classic result in the theory of normally hyperbolic invariant manifolds (see \cite{wigginsnormally}), and thus will be stated without proof. 
\begin{lemma}\label{lemma.lypaunovnumberseqdef}
	The generalized Lyapunov type numbers $\nu$ and $\sigma$ can be equivalently defined by
	\begin{equation*}
		\nu(p) = \limsup_{t\to +\infty} \|B_t(p)\|^{\frac{1}{t}}, \quad \sigma(p) = \limsup_{t\to +\infty} \frac{\ln \|A_t(p)\|}{-\ln\|B_t(p)\|}.
	\end{equation*}
\end{lemma}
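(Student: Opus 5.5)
Both identities follow from the definitions of $\nu$ and $\sigma$ as infima once we note that the two defining conditions are monotone in the parameter. We treat $\nu$ first, then $\sigma$. Throughout, $t\mapsto\|B_t(p)\|$ is positive and finite, since $D\phi_t$ is invertible and $\Pi$ restricted to the image of $N$ is nondegenerate.

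\textbf{The number $\nu$.} Let $L=\limsup_{t\to+\infty}\|B_t(p)\|^{1/t}$. If $a>L$, pick $a'\in(L,a)$. For large $t$ we have $\|B_t(p)\|\le a'^t$, so $\|B_t(p)\|/a^t\le (a'/a)^t\to0$. Hence $\nu(p)\le a$, and letting $a\downarrow L$ gives $\nu(p)\le L$. Conversely, suppose $a<L$ and $a>0$. Then there are $t_k\to\infty$ with $\|B_{t_k}(p)\|^{1/t_k}>a$, that is, $\|B_{t_k}(p)\|/a^{t_k}>1$. So the quotient does not tend to $0$, and $a$ is not admissible. The admissible set is upward closed, so $\nu(p)\ge L$. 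If $L=0$ the two arguments already give equality.

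\textbf{The number $\sigma$.} This uses the same monotonicity. Assume $\nu(p)<1$, which is the regime where $\sigma$ is meaningful. Then $\|B_t(p)\|\to0$ exponentially and $-\ln\|B_t(p)\|\to+\infty$. Write
\[
\|A_t(p)\|\,\|B_t(p)\|^s=\exp\!\bigl(-\ln\|B_t(p)\|\,(q(t)-s)\bigr),\qquad q(t)=\frac{\ln\|A_t(p)\|}{-\ln\|B_t(p)\|}.
\]
Let $S=\limsup_{t\to+\infty} q(t)$.
\begin{itemize}
\item If $s>S$, pick $s'\in(S,s)$. For large $t$, $q(t)-s\le s'-s<0$. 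Since $-\ln\|B_t(p)\|\to\infty$, the exponent tends to $-\infty$ and the product tends to $0$. Hence $\sigma(p)\le S$.
\item If $s<S$, then along a sequence $t_k\to\infty$ we have $q(t_k)-s>\delta>0$. The exponent is then positive and the product is at least $1$, so $s$ is not admissible. Hence $\sigma(p)\ge S$.
\end{itemize}
The infimum in the definition is over $s\ge0$, so the correct statement is $\sigma(p)=\max(S,0)$. Since $A_t$ is invertible, $\|A_t\|\ge\|A_{-t}\|^{-1}$, which only gives $S\ge -\limsup(\cdots)$. Because of this, and since only upper bounds on $\sigma$ matter in applications, the case $S<0$ has to be read with the convention $\sigma=\max(S,0)$. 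The only real subtlety is the standing assumption $\nu(p)<1$: without it the sign of $-\ln\|B_t\|$ is not controlled and the ratio formula can fail.
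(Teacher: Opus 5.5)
The paper does not prove this lemma; it quotes it as a classical fact from Fenichel and Wiggins. Your argument is the standard one behind it, and it is correct. For $\nu$, both inequalities follow directly from the definition of $\limsup$. For $\sigma$, the rewriting $\|A_t(p)\|\,\|B_t(p)\|^s=\exp\bigl((-\ln\|B_t(p)\|)(q(t)-s)\bigr)$, together with $-\ln\|B_t(p)\|\to+\infty$, settles both directions.

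The two caveats you raise are genuine corrections to the statement as printed, not mere conventions.

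\emph{The hypothesis $\nu(p)<1$ is needed.} Without it the ratio formula can fail. Take a source equilibrium of a normally repelling manifold with $\|A_t(p)\|=e^{-t/2}$ and $\|B_t(p)\|=e^{t}$. The ratio is identically $1/2$, while $\sigma(p)=0$.

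\emph{The value is $\sigma(p)=\max(S,0)$.} Since the infimum is over $s\ge 0$, this is the correct formula, and $S<0$ really occurs. For example, at an equilibrium of an attracting manifold that is a source for the restricted flow, $\|A_t(p)\|=e^{-t/2}$ and $\|B_t(p)\|=e^{-t}$ give $S=-1/2$ but $\sigma(p)=0$.

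Neither issue affects the paper:
\begin{enumerate}
\item Relative normal hyperbolicity already presupposes $\nu<1$.
\item The proof of Theorem~\ref{thm:GC} only uses $\nu(p)\le\limsup_{t\to+\infty}\|B_t(p)\|^{1/t}$, and it bounds $\sigma$ directly from its definition.
\item On $\mathbb{S}^1\times M_0$ the field $(1,f)$ never vanishes, so $\|A_t\|$ is bounded below and $S\ge 0$ there anyway.
\end{enumerate}
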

As seen above, relative normal hyperbolicity is verified by assigning certain bounds for those quantities. A classic result concerning such bounds is the following Uniformity Lemma, whose proof can be found in \cite{Fenichel1971}.
\begin{lemma}[Uniformity Lemma]\label{lemma.uniformitylemma}
	If $a \in (0,1]$ and $s>0$ are such that $\nu(p) < a$ and $\sigma(p) < s$ for all $p \in M$, then there are $\hat{a}\in (0,a)$ and $\hat{s} \in (0,s)$ such that $\nu(p) <\hat{a}$ and $\sigma(p) < \hat{s}$ for all $p \in M$.
\end{lemma}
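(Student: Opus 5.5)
The plan is a compactness-plus-cocycle argument. Neither lemma quantity is continuous in $p$. However, for each fixed $t$ the operators $A_t(p)$ and $B_t(p)$ depend continuously on $p$, and they satisfy the cocycle identities
$$A_{t+u}(p)=A_u(\phi_{-t}(p))\,A_t(p), \qquad B_{t+u}(p)=B_t(p)\,B_u(\phi_{-t}(p)),$$
the latter because $N$ is invariant under the linearized flow along $M$, so that $\Pi$ can be pulled through. Hence $\|A_t\|$ and $\|B_t\|$ are submultiplicative along backward orbits. The idea is to turn the pointwise asymptotic information at each $p$ into a strict estimate at a single finite time $t_p$. That estimate persists on a neighbourhood of $p$ by continuity, and compactness of $M$ leaves finitely many such neighbourhoods. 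Concatenating the finite-time estimates along an arbitrary orbit then yields uniform exponential bounds.

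\textbf{Step 1 ($\nu$).} Fix $p$. By \Cref{lemma.lypaunovnumberseqdef}, $\nu(p)<a$ gives some $a_p\in(\nu(p),a)$ and a time $t_p\ge 1$ with $\|B_{t_p}(p)\|<a_p^{t_p}$. By continuity in the base point, the same strict inequality holds on an open neighbourhood $V_p$ of $p$. Choose a finite subcover $V_{p_1},\dots,V_{p_k}$ of $M$. Set $a_*=\max_i a_{p_i}<a$ and $\bar t=\max_i t_{p_i}$.

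Now take any $q\in M$ and $t\ge 0$. Walk along the backward orbit of $q$: at each stage the current point lies in some $V_{p_i}$, so we peel off a block of length $t_{p_i}$ with norm less than $a_*^{t_{p_i}}$. We stop when fewer than $\bar t$ time units remain; that remainder is bounded by $C=\sup\{\|B_u(x)\|: u\in[0,\bar t],\ x\in M\}<\infty$. Submultiplicativity then gives
$$\|B_t(q)\|\le C\,a_*^{\,t-\bar t}\quad\text{for all } q\in M,\ t\ge 0.$$
Consequently $\nu(q)\le a_*$, and any $\hat a\in(a_*,a)$ works.

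\textbf{Step 2 ($\sigma$).} Fix $p$ and choose $s'_p\in(\sigma(p),s)$. From the definition, $\|A_t(p)\|\,\|B_t(p)\|^{s'_p}\to 0$. Shrinking $s'_p$ slightly if needed (still above $\sigma(p)$), and using Step 1 to ensure $\|B_t\|<1$ for large $t$, we find $t_p$ with
$$\|A_{t_p}(p)\|\,\|B_{t_p}(p)\|^{s'}<\tfrac12 \quad\text{and}\quad \|B_{t_p}(p)\|<1,$$
where $s'=\max_i s'_{p_i}<s$ is taken after passing to the finite cover. This is legitimate because raising a number in $(0,1)$ to a larger power only decreases it. The quantity $F_t(q)=\|A_t(q)\|\,\|B_t(q)\|^{s'}$ is submultiplicative along backward orbits, being a product of submultiplicative factors with positive exponents. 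The same covering argument as in Step 1 therefore yields
$$F_t(q)\le C'\lambda^t\quad\text{for some } \lambda<1 \text{ and all } q\in M.$$

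Taking logarithms and using Step 1, which gives $-\ln\|B_t(q)\|\ge t\,|\ln a_*|-C''\to\infty$, we obtain
$$\frac{\ln\|A_t(q)\|}{-\ln\|B_t(q)\|}\;\le\; s'+\frac{\ln C'+t\ln\lambda}{-\ln\|B_t(q)\|}.$$
The last term is eventually negative. Hence $\sigma(q)\le s'$, and any $\hat s\in(s',s)$ works.

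\textbf{Main obstacle.} The delicate point is Step 2's bookkeeping. The exponent $s'$ must be uniform over the cover, while the finite-time inequality at $p_i$ was obtained for $s'_{p_i}$. This is resolved by first securing $\|B_{t_p}(p)\|<1$ (via Step 1) and then replacing every exponent by the maximum $s'$. A second point to check is that the remainder blocks contribute only bounded constants. This holds because $t\mapsto A_t(x)$ and $t\mapsto B_t(x)$ are continuous on the compact set $[0,\bar t]\times M$.
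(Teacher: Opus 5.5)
Your proof is correct. It is the standard compactness-plus-cocycle argument from Fenichel, to which the paper defers without giving a proof; the same finite-time submultiplicativity device appears in the paper's proof of Theorem~\ref{thm:GC}.

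Two points in the write-up need fixing. First, the identity $B_{t+u}(p)=B_t(p)\,B_u(\phi_{-t}(p))$ does not hold because $N$ is invariant under the linearized flow, which in general it is not. It holds because $TM$ is invariant under $D\phi_t$ and $\ker\Pi=TM$, so that $\Pi\,D\phi_t=\Pi\,D\phi_t\,\Pi$ along $M$. Second, in Step~2 you should first state the finite-time inequality with exponent $s'_p$ on $V_p$, together with $\|B_{t_p}\|<1$ there, and only after passing to the finite cover upgrade it to $s'=\max_i s'_{p_i}$; your closing paragraph in effect does this.
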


Finally, in order to more easily verify absolute normal hyperbolicity, we define a third generalized Lyapunov type number:
\begin{equation*}
	\xi_{\pm}(p) : = \inf \left\{a\geq 0 : \frac{\|A_t(p)\|}{a^{|t|}} \to 0 \; \text{as} \; t \to \pm \infty \right\}.
\end{equation*}The argument used in \cite{wigginsnormally} to prove \Cref{lemma.lypaunovnumberseqdef} can easily be adapted to guarantee that 
\begin{equation*}
	\xi_{\pm}(p) = \limsup_{t \to \pm \infty} \|A_t(p)\|^{\frac{1}{|t|}}.
\end{equation*}With that in mind, we define the absolute Lyapunov type numbers 
\begin{equation*}
	\nu(M) : = \sup_{p \in M} \nu(p) \quad \text{and} \quad \xi_{\pm}(M) = \sup_{p \in M} \xi_{\pm}(p).
\end{equation*}
We are now able to prove a result that aids in verifying absolute normal hyperbolicity.
\begin{lemma} \label{lemma.verifyabsNH}
	If $M$ is a compact manifold of class $C^1$ invariant under the flow of $\dot x = F(x)$ for which $\nu(M)<1$ and there is $r\geq 1$ such that $ \nu (M) \cdot \left(\xi_{\pm}(M)\right)^r<1$, then $M$ is absolutely $r$-normally hyperbolic.
\end{lemma}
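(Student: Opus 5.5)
The plan is to produce the constants $K_A,K_B,\rho_M,\rho_N$ of \Cref{def:AbsNH} directly from the absolute Lyapunov type numbers. Write $\nu:=\nu(M)$ and $\xi:=\max\{\xi_+(M),\xi_-(M)\}$. First I reduce to $\nu>0$ and $\xi\ge 1$. Since $A_t(p)A_{-t}(\phi_{-t}(p))=\mathrm{Id}$ on the tangent spaces, necessarily $\xi\geq 1$. If $\nu=0$, the strict inequalities remain true after replacing $\nu$ by a small positive number, so nothing is lost. Because $\nu\xi^r<1$ and $\nu<1$, I can choose $a\in(\nu,1)$ and $b>\xi\ge1$ with $a b^r<1$. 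I then set $\rho_N:=-\ln a>0$ and $\rho_M:=\ln b>0$. The inequality $ab^r<1$ is exactly $\rho_N>r\rho_M$.

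The core step is to upgrade the pointwise limsup bounds to uniform exponential bounds. Concretely, I want $K_B$ with $\|B_t(p)\|\le K_Ba^t$ for all $t\ge0$ and $p\in M$, and $K_A$ with $\|A_t(p)\|\le K_Ab^{|t|}$ for all $t$ and $p$. This is where I expect the main obstacle: $\nu(M)<a$ only says that for each $p$ there is a $t_p$ beyond which $\|B_t(p)\|\le a'^t$ for some $a'<a$. The uniformity must come from compactness together with the cocycle property. I would use the Fenichel-style argument behind \Cref{lemma.uniformitylemma}, proceeding as follows.

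For each $p$, pick $a_1\in(\nu,a)$ and a time $t_p$ with $\|B_{t_p}(p)\|<a_1^{t_p}$. By continuity of $(t,q)\mapsto B_t(q)$, this inequality holds on a neighbourhood $V_p$ of $p$, with the same $t_p$. Compactness gives finitely many $V_{p_i}$ with times bounded by some $\bar t$. For arbitrary $q$ and $t$, I decompose the orbit segment backwards into blocks of lengths $t_{p_i}$, each contracting by $a_1^{t_{p_i}}$, plus a remainder of length at most $\bar t$. The remainder is bounded by $C:=\sup_{|s|\le\bar t,\,q\in M}\|B_s(q)\|$, which is finite by compactness. The projected normal cocycle satisfies $B_{t+s}(p)=B_t(p)B_s(\phi_{-t}(p))$, since $D\phi$ preserves $TM$ and we project onto $N$. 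Hence $\|B_t(q)\|\le C a_1^{t-\bar t}\le K_Ba^t$.

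The identical argument, applied to the genuine cocycle $A_t$ on $TM$ separately for $t\to+\infty$ and $t\to-\infty$ using $\xi_\pm<b$, gives $\|A_t(q)\|\le K_Ab^{|t|}$. Enlarging $K_A,K_B$ makes both exceed $1$. With these constants, \Cref{def:AbsNH} holds, so $M$ is absolutely $r$-normally hyperbolic.
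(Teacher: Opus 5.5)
Your proof is correct and follows the same route as the paper: pick rates just above $\nu(M)$ and $\xi_\pm(M)$ with $ab^r<1$, turn them into uniform exponential bounds on $\|B_t\|$ and $\|A_t\|$, and take logarithms to obtain $\rho_N>r\rho_M$. The one difference is that the paper asserts the uniform constants $K_1,K_2$ directly ``by definition,'' whereas your compactness-plus-cocycle argument actually justifies this step. (Your side claim $\xi\ge 1$ also needs those uniform bounds to be justified, but it is dispensable, since you can simply take $b>\max\{\xi,1\}$.)
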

\begin{proof}
	Let $a > \xi_{\pm}(M)$. Then, by definition of $A_t(p)$ and $\xi_{\pm}$, it follows that there is $K_1>0$ such that 
	\begin{equation*}
		\|A_t(p)\| \leq K_1 a^{|t|} = K_1 e^{\ln a \, |t|}.
	\end{equation*}	for all $t \in \R$ and all $p \in M$. Hence, it is clear that the tangential requirement for absolute normal hyperbolicity is satisfied for some $\rho_M\leq \ln a$.
	Likewise, if $b< \nu(M)<1$, there is $K_2>0$ such that 
	\begin{equation*}
		\|B_t(p)\| \leq K_2 b^{t} = K_2 e^{- (-\ln b) \, t}.
	\end{equation*}	for all $t\geq 0$ and all $p \in M$. Thus, the requirement for absolute normal hyperbolicity concerning the normal direction is satisfied for some $\rho_N \geq - \ln b$.
	
	Since $ \nu (M) \cdot \left(\xi_{\pm}(M)\right)^r<1$, it is easy to see that $a$ and $b$ can be chosen such that $b \cdot a^r <1$. In that case, by taking the logarithm on both sides, it follows that $\ln b + r \ln a <0$, that is, $$\rho_N \geq -\ln b > r \ln a \geq r \rho_M,$$
	as wished.
\end{proof}

\section{The continuation method: a general result} \label{sec:Continuation.General.Result}
In \cite{Kopell85}, a method to prove a result similar to \Cref{thm.main.wnhim} was proposed. This method was later given a more general treatment in \cite[Section 7.3]{wigginsnormally}. However, a gap in its proof strategy was identified in \cite{MR1740943}. In what follows, we provide an overview of the method, its gap and, more importantly, how we bridge this gap in the case of $d$-tori investigated in \Cref{thm.main.wnhim}.

Let $r_H$ be a positive integer. To understand the proposed approach, we turn our attention to the one-parameter family (\ref{eq:thm.main1}), assuming that hypotheses \ref{hyp.1-thm1} holds and that $\dot z = f(z)$ has an attracting, relatively $r_H$-normally hyperbolic, $d$-dimensional, compact invariant manifold $M_0$ (not necessarily a $d$-torus).

We embed (\ref{eq:thm.main1}) into the two-parameter family
\begin{equation*} \label{eq:continuationfamily}
	\dot x = \delta^\ell f(x) + \e^{\ell+1} h(t,x,\e),
\end{equation*}
with $(\delta,\e) \in (0,\e_0) \times (0,\e_0)$. By re-scaling time to $s = \delta^\ell t$ and adding $s$ as an extra angular variable modulo the time period $T$, we obtain
\begin{equation} \label{eq:continuationfamilyResAut}
	\begin{aligned}
		&\dot s =1, \\
		&\dot x =  f(x) + \frac{\e^{\ell+1}}{\delta^\ell} \, h\left(\frac{s}{\delta^\ell},x,\e\right).		
	\end{aligned}
\end{equation}
By fixing $\delta$ and restricting $\e \leq \delta$, \cref{eq:continuationfamilyResAut} becomes a \textit{regular} $\mathcal{O}(\e)$-perturbation of
\begin{equation} \label{eq:continuationfamilyResAutUnp}
	\begin{aligned}
		&\dot s =1,\\
		&\dot x =  f(x),
	\end{aligned}
\end{equation} 
which admits an attracting, relatively $r_H$-normally hyperbolic manifold $\cc^1 \times M_0$. Then, \Cref{thm:fenichelper} guarantees that, for each $\delta\in (0,\e_0)$, there is $\e_1(\delta) \in (0,\e_0)$ such that, if $\e \leq \e_1(\delta)$, \cref{eq:continuationfamilyResAut} admits an invariant manifold $\mathcal{M}_{\delta,\e}$, which is $C^1$-near $\cc^1 \times M_0$.  

The idea of the proposed method of proof is to show, via direct estimation of the generalized Lyapunov type numbers $\nu^{\delta,\e}$ and $\sigma^{\delta,\e}$ (see \Cref{sec:NH}) of the perturbed manifold, that, if $\delta$ is chosen sufficiently small, then $\nu^{\delta,\e}(p)<1$ and $\sigma^{\delta,\e}(p)<1/r_H$ for any $p \in \mathcal{M}_{\delta,\e}$, even as $\e$ approaches $\delta$.  This in turn, would guarantee that $\mathcal{M}_{\delta,\delta}$ is relatively $r_H$-normally hyperbolic, which amounts to the existence of a relatively $r_H$-normally hyperbolic invariant manifold in the extended phase space of \cref{eq:continuationfamily} for $\e=\delta$ or, equivalently, of \cref{eq:thm.main1}.

The gap in the proof is not in the specific manner in which the generalized Lyapunov type numbers are bounded, but in the incorrect implicit assumption that the family of ``smoothly deforming manifolds'' parameterized by $\e$ can only lose relative $r_H$-normal hyperbolicity at some $\e_*$ by having $\sup \{\nu^{\delta,\e}(p): p \in \mathcal{M}_{\delta,\e}\} \to 1$ or $\sup \{\sigma^{\delta,\e}(p): p \in \mathcal{M}_{\delta,\e}\} \to 1/r_H$ as $\e \to \e_*$. 

In fact, what can occur is that those numbers are bounded away from those thresholds as $\e$ nears $\e^*$, yet the family of manifolds does not converge to a $C^1$ manifold. An illustrative example---provided by Chicone and Liu in \cite{MR1740943}---is that of hyperbolic limit cycles approaching a homoclinic loop at a hyperbolic saddle with distinct eigenvalues on the plane. In that case, loss of normal hyperbolicity occurs by simple loss of smoothness of the ``deforming manifold'' as described, the behavior of the generalized Lyapunov type numbers notwithstanding.

In \cite{MR1740943}, Chicone and Liu corrected the method above as follows: for each $\delta \in (0,\e_0)$, let $A^\delta \subset [0,\delta]$ be the maximal interval with left endpoint at $0$ for which, if $\e \in A^\delta$, then \cref{eq:continuationfamilyResAut} admits a relatively $r_H$-normally hyperbolic invariant manifold $\mathcal{M}_{\delta,\e}$ as described above. $A^\delta$ is non-empty because $0 \in A^\delta$. It is also open due to persistence of relatively normally hyperbolic manifolds under regular perturbations (see \Cref{thm:fenichelper}). Thus, we can conclude that $A^\delta = [0,\delta]$ provided we show that it is also closed. To do so, they suggest we define $\e_*(\delta) = \sup A^\delta$ and attempt to complete two steps: show that $\mathcal{M}_{\delta,\e_*(\delta)}$ exists as a $C^1$ invariant manifold---for instance, by realizing it as the graph of a function which is the limit of an equicontinuous family; and verify the definition of relative $r_H$-normal hyperbolicity of $\mathcal{M}_{\delta,\e_*(\delta)}$ directly. If that is done, then $A^\delta$ is indeed closed, so that $\e_*(\delta) = \delta$.

Here, we suggest a slightly different approach, which works when existence of the persisting family of $C^1$ invariant manifolds $\mathcal{M}_{\delta,\e}$ extending up to $\e=\delta$ can be asserted \textit{a priori} for sufficiently small $\delta>0$. If we can do so, we are justified in calculating the generalized Lyapunov type numbers of those manifolds. Thus, estimation of those quantities alone suffices to guarantee that every $\mathcal{M}_{\delta,\e}$, including when $\e =\delta$, is relatively $r_H$-normally hyperbolic. This approach leads to \Cref{thm:GC} below, which also addresses the absolute case. Its employment in the case of tori is the main subject of this paper and will be discussed in detail in \Cref{sec:proofthmA}.

\begin{theorem} \label{thm:GC} Let $\ell, n$, and $r_H$ be positive integers. Consider the family
	\begin{equation} \label{eq:main-thmGC}
		\dot x = \e^\ell f(x) + \e^{\ell+1} h(t,x,\e), \quad (t,x,\e) \in \R \times U \times (0,\e_0),
	\end{equation}
with $\e_0>0$, $f$ and $h$ of class $C^{r_H}$, and satisfying the following hypotheses:
	\begin{enumerate}
		\nameditem{$(H.1)$} \label{hyp.1-thmGC} There is $T>0$ such that $h(t+T,x,\e) = h(t,x,\e)$ for all $(t,x,\e)\in \R \times U \times (0,\e_0)$;
		\nameditem{$(H.2')$} \label{hyp.2'-thmGC} $\dot z = f(z)$ has an attracting, relatively (resp. absolutely) $r_H$-normally hyperbolic, $d$-dimensional, compact invariant manifold $M_0$;
		\nameditem{$(H.P)$} \label{hyp.P-thmGC} The family (\ref{eq:continuationfamilyResAut}) associated to (\ref{eq:main-thmGC}) admits, for $\delta$ sufficiently small and $\e \leq \delta$, a compact invariant manifold $\mathcal{M}_{\delta,\e}$ of class $C^1$, given as the image $\mathcal{D}_{\delta,\e} (\cc^1 \times M_0)$ of a family of embeddings $\mathcal{D}_{\delta,\e}: \cc^1 \times M_0 \to \cc^1 \times \R^{n}$ such that $\| \mathcal{D}_{\delta,\e} - \mathcal{I}\|_{C^1} \to 0$ uniformly on $\delta$ as $\e \to 0^+$, where $\mathcal{I}: \cc^1 \times M_0 \to \cc^1 \times \R^n$ is the standard inclusion.
	\end{enumerate}
	Then, there is $\e_1 \in (0,\e_0)$ such that the extended system 
	\begin{equation*} \label{eq:thm.GC-ext}
		\begin{aligned}
			&\dot \tau = 1, \quad
			&\dot x = \e^\ell f(x) + \e^{\ell+1} h(\tau,x,\e), \quad (\tau,x) \in \R / ( \mathbb{Z} T) \times U,
		\end{aligned}
	\end{equation*} admits an attracting, relatively (resp. absolutely) $r_H$-normally hyperbolic, compact invariant manifold $\mathcal{N}_\e$ in for each $\e \in (0,\e_1)$.
\end{theorem}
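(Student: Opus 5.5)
The plan is to take the manifolds $\mathcal{M}_{\delta,\e}$ supplied by \ref{hyp.P-thmGC} at face value and to estimate their generalized Lyapunov type numbers directly, uniformly in $\delta$. We then set $\e=\delta$ and undo the time rescaling $s=\delta^\ell t$.

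The first step is to write the variational equation of \eqref{eq:continuationfamilyResAut} along $\mathcal{M}_{\delta,\e}$. The vector field is $(1, f(x)+\e^{\ell+1}\delta^{-\ell}h(s/\delta^\ell,x,\e))$. Since $\e\le\delta$, the coefficient $\e^{\ell+1}\delta^{-\ell}$ is at most $\e$. The key observation is that the $x$-derivative of the perturbation, $\e^{\ell+1}\delta^{-\ell}D_xh$, is $\mathcal{O}(\e)$ uniformly in $\delta$. Its $s$-derivative, which carries an extra factor $\delta^{-\ell}$, is not small, but it only enters the equation for the $s$-direction, and $\dot s=1$ has trivial linearization. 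Hence, in coordinates $(s,x)$, the linearized flow has block form: the $\delta s$ component is constant, and the $\delta x$ component satisfies $\dot{\delta x}=(Df(x)+\mathcal{O}(\e))\,\delta x+ g(s,x)\,\delta s$, where $g$ may be large.

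The second step is to compute the operators $A_t,B_t$ of \eqref{eq:defABFenichel} for $\mathcal{M}_{\delta,\e}$ using the splitting induced by $\mathcal{D}_{\delta,\e}$. Take as normal bundle the push-forward under $D\mathcal{D}_{\delta,\e}$ of $\{0\}\times N_0$, where $N_0$ is the normal bundle of $M_0$ in $\R^n$. This choice is legitimate because normal hyperbolicity does not depend on the choice of complementary bundle/metric, as remarked in \Cref{sec:NH}. The large term $g\,\delta s$ is problematic for the tangential operator only through the direction $\partial_s$. I intend to deal with this as follows. The vector field itself, $(1,\ast)$, is tangent to $\mathcal{M}_{\delta,\e}$ and invariant under the flow, so its growth is bounded by $\sup|F|$. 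Together with the $C^1$-closeness $\|\mathcal{D}_{\delta,\e}-\mathcal{I}\|_{C^1}\to0$, this lets me decompose $T\mathcal{M}_{\delta,\e}$ as $\mathrm{span}(F)\oplus D\mathcal{D}_{\delta,\e}(\{0\}\times TM_0)$. On the second summand the dynamics is an $\mathcal{O}(\e)$ perturbation, uniform in $\delta$, of the dynamics on $TM_0$. The off-diagonal coupling is then handled by the variation of constants formula.

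The third step is a Gronwall/roughness argument. The normal block is an $\mathcal{O}(\e)$ perturbation of $\Pi\,D\phi^f_t|_{N_0}$ along orbits on $\mathcal{M}_{\delta,\e}$, which are themselves uniformly $C^0$-close to orbits of $f$ on $M_0$. Roughness of exponential dichotomies/growth bounds therefore gives $\|B_t\|\le K_B' e^{-(\rho_N-C\e)t}$ and $\|A_t\|\le K_A' e^{(\rho_M+C\e)|t|}$, with $C$ independent of $\delta$. In the relative case I would instead use \Cref{lemma.uniformitylemma} to obtain uniform margins $\hat a,\hat s$ for $M_0$ and propagate them in the same way. By \Cref{lemma.verifyabsNH}, or by Definition~\ref{def:RelNH} directly, there is $\e_1$, independent of $\delta$, such that $\mathcal{M}_{\delta,\e}$ is attracting and $r_H$-normally hyperbolic for all $\e\le\min(\delta,\e_1)$. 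Taking $\e=\delta<\e_1$ and rescaling time back ($\mathcal{N}_\e$ being $\mathcal{M}_{\e,\e}$ with $s=\e^\ell\tau$) then gives the conclusion. Here I rely on the invariance of both notions under time reparametrization and coordinate change noted in \Cref{sec:NH}. The rescaling only multiplies the rates by $\e^\ell$, so the ratio conditions are preserved.

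The main obstacle is the second step: the $\partial_s h$ term is of size $\e^{\ell+1}\delta^{-2\ell}$, which is unbounded as $\delta\to0$. I must show rigorously that it cannot spoil the estimates. My first attempt would be to use the invariant direction $F$ and the block-triangular structure, so that this term only shifts tangent vectors by multiples of $F$. If that fails, I would fall back on working in the original time $t$ with $\tau$ periodic, where $\partial_\tau h$ appears with coefficient $\e^{\ell+1}$ but all rates are scaled by $\e^\ell$.
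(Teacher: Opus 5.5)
Your strategy is the paper's. You take $\mathcal{M}_{\delta,\e}$ from (H.P) at face value and use a transversal bundle with vanishing $s$-component. You split off the invariant direction $F$, so that the singular term $\e^{\ell+1}\delta^{-2\ell}\partial_1 h$, which only multiplies the $s$-component of a vector, never enters the estimates. You conclude with the Uniformity Lemma or \Cref{lemma.verifyabsNH}, then set $\e=\delta$ and rescale time.

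Your ``first attempt'' on the main obstacle is exactly what works. The paper makes it cleaner by taking as complement of $J^{\delta,\e}=\operatorname{span}(F)$ the bundle $Q^{\delta,\e}$ of tangent vectors with zero $s$-component. This bundle is flow-invariant because the foliation $\{s=\text{const}\}$ is, so $T\mathcal{M}_{\delta,\e}=Q^{\delta,\e}\oplus J^{\delta,\e}$ is block-diagonal and no variation of constants is needed; meanwhile $A_tF(q)=F(\phi_{-t}(q))$ is trivially bounded. Likewise, the transversal bundle should be $I^{\delta,\e}_q:=N^0_{\pi(q)}$, with $\pi$ the tubular projection. A push-forward of $\{0\}\times N_0$ by $D\mathcal{D}_{\delta,\e}$ is not defined, since $\mathcal{D}_{\delta,\e}$ only lives on $\cc^1\times M_0$.

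The step that does not work as written is the third one. Orbits on $\mathcal{M}_{\delta,\e}$ are \emph{not} uniformly $C^0$-close to orbits of $f$ on $M_0$ for all time. Their projection $\pi(\phi^{\delta,\e}_t(q))$ onto $\cc^1\times M_0$ is only a pseudo-orbit, and nearby orbits on $M_0$ may separate at rate $\rho_M$; a slightly perturbed rotation on a limit cycle already drifts in phase linearly in $t$. Hence there is no single reference linear system along which a roughness theorem for exponential dichotomies can be applied. Moreover, (H.P) gives only $o(1)$ closeness, uniform in $\delta$, with no rate, so bounds like $K_B'e^{-(\rho_N-C\e)t}$ are not justified.

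The missing ingredient is a finite-window argument.
\begin{enumerate}
\item Fix $S$ first. Choose it large enough that $\|B^0_S\|^{1/S}<1$, or that $\|A^0_S(p)\|\,\|B^0_S(p)\|^b\le 1/4$ for some $b<1/r_H$ given by the Uniformity Lemma, or, in the absolute case, that $K_B^{1/S}e^{-\rho_N}$ is close to $e^{-\rho_N}$.
\item Prove, via Gr\"onwall and \Cref{lemma.flowcloseness}, that $\|B^{\delta,\e}_t(q)\|_{\delta,\e}\le C_B(\delta)\|B^0_t(\pi(q))\|$ and $\|A^{\delta,\e}_t(q)\|_{\delta,\e}\le C_A(\delta)\|A^0_t(\pi(q))\|$. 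These hold only for $t\in[-S,S]$, with $C_A,C_B\to1$ as $\delta\to0^+$.
\item Chain the windows by submultiplicativity, re-basing the comparison at $p_i=\pi(\phi^{\delta,\e}_{-iS}(q))$ on each window.
\end{enumerate}
Because $S$ is fixed before $\delta$ is shrunk, the accumulated factor $C(\delta)^{(b+1)Q}$ is beaten by $4^{-Q}$, or by the gap between $\tilde\rho_N$ and $r_H\tilde\rho_M$ in the absolute case. This yields $\nu^{\delta,\e}<1$ and $\sigma^{\delta,\e}<1/r_H$ (resp.\ absolute $r_H$-normal hyperbolicity) uniformly in $\delta$ and $\e\le\delta$, including $\e=\delta$.
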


\subsection{Proof of \Cref{thm:GC}}
	
	The proof will follow a main line of argument that will be occasionally interposed with lemmas tackling some of its technical aspects. In summary, we consider \cref{eq:continuationfamilyResAut} and prove that $\mathcal{M}_{\delta,\e}$ is relatively (resp. absolutely) $r_H$-normally hyperbolic if $\e\leq \delta$ and $\delta$ is made sufficiently small. Having done that, it suffices to re-scale time back to $t = s/ \delta^{\ell}$ and change variables from $s$ to $\tau$ via $\tau = s / \delta^{\ell}$ to obtain a family $\mathcal{N}_{\delta,\e}$ of invariant manifolds of
	$$
	\dot \tau = 1 , \quad \dot x = \delta^\ell f(x) + \e^{\ell+1}h(\tau,x,\e).
	$$
	One can thus define $\mathcal{N}_{\e}:= \mathcal{N}_{\e,\e}$, as it is easily verified to satisfy the stated conditions. Hence, all that is left to prove is that each $\mathcal{M}_{\delta,\e}$ is indeed relatively (resp. absolutely) $r_H$-normally hyperbolic, which is done below.

	Hypothesis \ref{hyp.P-thmGC} ensures that, if $\delta$ is chosen sufficiently small and $\e\leq \delta$, the invariant manifolds $\mathcal{M}_{\delta,\e}$ can be made arbitrarily $C^1$-near to $\mathcal{M}_0:= \cc^1 \times M_0$. In particular, for sufficiently small $\delta$, the normal bundle $N^0$ of $\mathcal{M}_0$ is still transversal to $\mathcal{M}_{\delta,\e}$. More precisely, take a tubular neighborhood $U$ of $\mathcal{M}_0$ and the bundle projection $\pi:U \to \mathcal{M}_0$ associated to $N^0$. Then, $\pi|_{\mathcal{M}_{\delta,\e}}$ is a diffeomorphism for small values of $\delta$, and we can define the vector bundle $I^{\delta,\e}$ on $\mathcal{M}_{\delta,\e}$ by setting $I^{\delta,\e}_p : = N^0_{\pi(p)}$. One can verify that $I^{\delta,\e}$ is transversal to $T\mathcal{M}_{\delta,\e}$ if $\delta$ is chosen sufficiently small and $\e \leq \delta$. 
	
	For $q \in \mathcal{M}_{\delta,\e}$, one can thus decompose any vector $v \in T_q (\cc^1 \times \R^n)$ as a unique sum of $v_T \in T_q\mathcal{M}_{\delta,\e}$ and $v_I \in I_q^{\delta,\e}$. We can then define an adapted metric $\langle\cdot,\cdot\rangle_q^{\delta,\e}$ over $\mathcal{M}_{\delta,\e}$ by setting
	\begin{equation*}
		\langle u, v \rangle_q^{\delta,\e} = \langle u_T, v_T\rangle_q + \langle u_I,v_I \rangle_q,
	\end{equation*}	for any $q \in \mathcal{M}_{\delta,\e}$ and $u,v \in T_q(\cc^1 \times \R^n)|_{\mathcal{M}_{\delta,\e}}$, where $\langle \cdot, \cdot \rangle$ denotes the standard flat metric on $\cc^1 \times \R^n$. It is easy to see that $T\mathcal{M}_{\delta,\e}$ and $I^{\delta,\e}$ are orthogonal under this metric, whose corresponding norm will be denoted by $\|\cdot\|_q^{\delta,\e}$. As remarked in \Cref{sec:NH}, the use of this metric does not change the definitions of normal hyperbolicity.
	
	Moreover, we define $\Pi_{\delta,\e} : T(\cc^1 \times \R^n)|_{\mathcal{M}_{\delta,\e}} \to I_{\delta,\e}$ to be the bundle projection with respect to the splitting $T(\cc^1 \times \R^n) = T\mathcal{M}_{\delta,\e} \oplus I_{\delta,\e}$. Analogously, $\Pi_0:T(\cc^1 \times \R^n)_{\mathcal{M}_0} \to N^0$ is the bundle projection with respect to $T(\cc^1 \times \R^n)_{\mathcal{M}_0} = T\mathcal{M}_0 \oplus N^0$.
	
	For convenience, we will work in the coordinate space $(s,x) \in \R \times \R^{n} = \R^{n+1}$ to calculate the generalized Lyapunov type numbers, which can be done because the covering map $(s,x) \mapsto (s \mod T , x)$ is a local isometry. In those coordinates and considering the identification $T\R^{n+1} = \R^{n+1}$, the fact that $\mathcal{M}_0 = \R \times M_0$ ensures that $N^0 \subset \{0\} \times \R^n$. Hence, each $I^{\delta,\e}$ has only vectors with vanishing first component. 
	
	On $\R^{n+1}$, we can define the pullback norm corresponding to $\| \cdot \|_q^{\delta,\e}$, and we shall compare it with the standard norm. Lifting the projections $\Pi_{\delta,\e}$ and $\Pi_0$, we obtain functions from $\R^{n+1}$ into the space of linear operators defined on $\R^{n+1}$. We conveniently maintain the notation henceforth applied to objects in $\cc^1 \times \R^n$ to their lifted counterparts. 
	
 	As linear subspaces of $\R^{n+1}$, \ref{hyp.P-thmGC} ensures that, for each $p \in \mathcal{M}_0$,	$$T_{\pi|_{\mathcal{M}_{\delta,\e}}^{-1}(p)}\mathcal{M}_{\delta,\e} \to T_{p} \mathcal{M}_0$$ uniformly on $\delta$ as $\e \to 0^+$, provided that $\e \leq \delta$. Hence, by letting $ q = \pi|_{\mathcal{M}_{\delta,\e}}^{-1}(p)$, choosing $\delta$ sufficiently small, and restricting $\e\leq \delta$, the projection $\Pi_{\delta,\e} (\pi|_{\mathcal{M}_{\delta,\e}}^{-1}(p))$ can be made arbitrarily close to $\Pi_0(p)$. Since $p$ is in the compact manifold $\mathcal{M}_0$, we conclude that there is a non-decreasing function $\eta(\delta)>0$ such that $\eta(\delta) \to 0$ as $\delta \to 0^+$ and \begin{equation*}
 		\|(\Pi_{\delta,\e}(q) - \Pi_0(p)) \cdot v \| \leq \eta(\delta) \cdot \|v\|,
 	\end{equation*}
	 for $p \in \mathcal{M}_0$, $v \in \R^{n+1}$, and $\e\leq \delta$, where $\|\cdot\|$ denotes the usual norm in $\R^{n+1}$.
 
 	\begin{lemma}
 	  	Let $\delta_0>0$ be small enough to allow the construction above to be performed for $0\leq \e \leq \delta \leq \delta_0$ and $\delta \neq 0$. In that case, the norms $\|\cdot\|^{\delta,\e}$ and $\|\cdot\|$ satisfy
 		\begin{equation} \label{eq:thmGC-NormEq}
 			\frac{1}{\left(1 + 2 \eta (\delta) \right)^2} \|v\|^2 \leq \left(\| v \|_q^{\delta,\e}\right)^2 \leq \left(1 + 2 \eta (\delta) \right)^2 \|v\|^2,
 		\end{equation}
 		for all $ q \in \mathcal{M}_{\delta,\e}$ and all $v \in \R^{n+1}$
 	\end{lemma}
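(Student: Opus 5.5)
The plan is to compare the two norms through the decomposition that defines $\|\cdot\|^{\delta,\e}$. Fix $q\in\mathcal{M}_{\delta,\e}$ and set $p=\pi(q)$. Write $v=v_T+v_I$ with $v_I=\Pi_{\delta,\e}(q)v$ and $v_T=(\Id-\Pi_{\delta,\e}(q))v$. By definition, $(\|v\|_q^{\delta,\e})^2=\|v_T\|^2+\|v_I\|^2$. Since $\mathcal{M}_0=\R\times M_0$ and $N^0$ is its Euclidean normal bundle, $\Pi_0(p)$ is an \emph{orthogonal} projection, and so is $\Id-\Pi_0(p)$. Put $a=\|\Pi_0(p)v\|$ and $b=\|(\Id-\Pi_0(p))v\|$. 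Then $a^2+b^2=\|v\|^2$, and hence $a+b\le\sqrt2\|v\|$.

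Using the estimate $\|(\Pi_{\delta,\e}(q)-\Pi_0(p))v\|\le\eta(\delta)\|v\|$ established just before the lemma, I would write
\[
v_I=\Pi_0(p)v+e_1,\qquad v_T=(\Id-\Pi_0(p))v+e_2,\qquad e_2=-e_1,
\]
with $\|e_1\|=\|e_2\|\le\eta\|v\|$, where $\eta=\eta(\delta)$.

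\textbf{Upper bound.} By the triangle inequality,
\[
\|v_I\|^2+\|v_T\|^2\le (a+\eta\|v\|)^2+(b+\eta\|v\|)^2\le\bigl(1+2\sqrt2\,\eta+2\eta^2\bigr)\|v\|^2 .
\]
This is at most $(1+4\eta+4\eta^2)\|v\|^2=(1+2\eta)^2\|v\|^2$.

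\textbf{Lower bound.} Here I would avoid the crude estimate $\|v\|\le\|v_T\|+\|v_I\|$, which loses a factor $2$. Instead, expand
\[
\|v\|^2=\|v_T\|^2+\|v_I\|^2+2\langle v_T,v_I\rangle .
\]
The main terms $\Pi_0(p)v$ and $(\Id-\Pi_0(p))v$ are orthogonal, so the cross term reduces to
\[
\langle(\Id-\Pi_0(p))v,e_1\rangle+\langle e_2,\Pi_0(p)v\rangle+\langle e_2,e_1\rangle .
\]
Its absolute value is at most $\eta(a+b)\|v\|+\eta^2\|v\|^2\le(\sqrt2\eta+\eta^2)\|v\|^2$. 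Therefore
\[
(\|v\|_q^{\delta,\e})^2\ge(1-2\sqrt2\,\eta-2\eta^2)\|v\|^2 .
\]

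\textbf{Main obstacle.} The remaining step is to check that
\[
(1-2\sqrt2\,\eta-2\eta^2)(1+2\eta)^2\ge1 .
\]
To first order the left side is $1+(4-2\sqrt2)\eta+O(\eta^2)$, so the inequality holds for all sufficiently small $\eta$. I would handle this by shrinking $\delta_0$ once more, which is permitted since $\eta(\delta)\to0$ as $\delta\to0^+$ and $\eta$ is non-decreasing. The bound is then uniform in $q$, $v$ and $\e\le\delta\le\delta_0$.

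The only delicate point in the argument is this constant bookkeeping. One must use the orthogonality of $\Pi_0$ so that the errors enter linearly in $\eta$ with a coefficient below $4$. Any cruder triangle-inequality estimate yields a constant like $2$ that does not tend to $1$ as $\delta\to0^+$.
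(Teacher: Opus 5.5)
Your proposal is correct and follows the paper's route. You split $v$ with $\Pi_{\delta,\e}(q)$, compare it with the orthogonal splitting given by $\Pi_0(p)$ via $\|(\Pi_{\delta,\e}(q)-\Pi_0(p))v\|\le\eta(\delta)\|v\|$, and use $\|\Pi_0(p)v\|^2+\|(\Id-\Pi_0(p))v\|^2=\|v\|^2$. Your upper bound is the paper's, slightly sharpened, and your cross-term expansion spells out the lower bound that the paper only calls ``a similar argument''.

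The extra shrinking of $\delta_0$ is harmless, and in fact unnecessary. Since $-2\langle v_T,v_I\rangle=2\langle(2\Pi_0(p)-\Id)v,e_1\rangle+2\|e_1\|^2$ and $2\Pi_0(p)-\Id$ is an isometry, you get $(\|v\|_q^{\delta,\e})^2\ge(1-2\eta)\|v\|^2$, which suffices whenever $\eta\le(\sqrt5-1)/4$. For $\eta\ge(\sqrt2-1)/2$, the trivial bound $\|v\|^2\le 2(\|v\|_q^{\delta,\e})^2$ already gives the claim, and these two ranges of $\eta$ overlap.
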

 
\begin{proof}
	By definition of $\| \cdot \|_q^{\delta,\e}$, the norm of any $v \in \R^{n+1}$ satisfies
	\begin{equation*}
		\begin{aligned}
			\left(\| v \|_q^{\delta,\e}\right)^2 &= \|\Pi_{\delta,\e}(q) \cdot v\|^2 + \|(I-\Pi_{\delta,\e}(q))\cdot  v\|^2 \\& \leq \|\Pi_0(p)\cdot v\|^2 + \|(I-\Pi_0(p)) \cdot v\|^2 + ( 4\eta(\delta) + 2 \eta^2(\delta) ) \|v\|^2.
		\end{aligned}
	\end{equation*}	Considering that the splitting associated to $\Pi_0(p)$ is orthogonal with respect to $\|\cdot \|$, it follows that $\|v\|^2 = \|\Pi_0(p) \cdot v\|^2 + \|(I-\Pi_0(p)) \cdot v\|^2$. Hence, 
	\begin{equation*}
			\left(\| v \|_q^{\delta,\e}\right)^2 \leq \left(1 + 4 \eta (\delta) + 2 \eta^2(\delta )\right) \|v\|^2  \leq (1+2\eta(\delta))^2 \|v\|^2,
	\end{equation*}	for any $q \in \mathcal{M}_{\delta,\e}$. A similar argument ensures also that $$\|v\|^2 \leq \left(1 + 2 \eta (\delta) \right)^2 \left(\| v \|_q^{\delta,\e}\right)^2,$$ so that we finally obtain
	\begin{equation*} \label{eq:thmGC-NormEqproof}
		\frac{1}{\left(1 + 2 \eta (\delta) \right)^2} \|v\|^2 \leq \left(\| v \|_q^{\delta,\e}\right)^2 \leq \left(1 + 2 \eta (\delta) \right)^2 \|v\|^2,
	\end{equation*}
	for all $ q \in \mathcal{M}_{\delta,\e}$.
\end{proof}

	For convenience, we henceforth drop the subscript in $\|\cdot \|_q^{\delta,\e}$, instead denoting it by ${\|\cdot \|_{\delta,\e}}$, with the point of application being given implicitly. A similar abuse of notation may be adopted for the projections $\Pi_{\delta,\e}$ and $\Pi_0$, with the omission of the base-point.
	
	Let $A^{\delta,\e}_t$ and $B^{\delta,\e}_t$ denote the operators defined in \eqref{eq:defABFenichel} acting on $\mathcal{M}_{\delta,\e}$ with respect to the flow $\phi^{\delta,\e}_t$ associated to the vector field \begin{equation*}\label{eq:thmGC-Fdefin}
		F(s,x,\delta,\e) = \left(1,f(x) + \frac{\e^{\ell+1}}{\delta^\ell} \, h\left(\frac{s}{\delta^\ell},x,\e\right) \right)
	\end{equation*} of \cref{eq:continuationfamilyResAut}. Hence, if $q \in \mathcal{M}_{\delta,\e}$, then
	\begin{equation*} \label{eq:thmGC-AandB}
		\begin{aligned}
			& A^{\delta,\e}_t(q) : = D\left(\phi^{\delta,\e}_{-t}|_{\mathcal{M}_{\delta,\e}}\right) (q) \quad \text{and} \quad B^{\delta,\e}_t(q) : = \Pi_{\delta,\e} \cdot D\phi^{\delta,\e}_{t} (\phi^{\delta,\e}_{-t}(q))|_{I_{\delta,\e}}.
		\end{aligned}
	\end{equation*} 
	We also define the generalized Lyapunov type numbers $\nu^{\delta,\e}(q)$ and $\sigma^{\delta,\e}(q)$ of $\mathcal{M}_{\delta,\e}$ as in \Cref{sec:NH}. Accordingly, for $p \in \mathcal{M}_0$, we define $A_t^0(p)$ and $B_t^0(p)$ to be the same operators with respect to the flow $\phi_t^0$ associated to the vector field $F_0(x) = (1,f(x))$ of (\ref{eq:continuationfamilyResAutUnp}). The corresponding generalized Lyapunov type numbers are denoted by $\nu^0(p)$ and $\sigma^0(p)$. 
	
	Considering that we cannot simply take $\e = \delta=0$ in \eqref{eq:continuationfamilyResAut} to obtain \eqref{eq:continuationfamilyResAutUnp}, it is not immediately obvious that their associated flows must remain close. Thus, we show a very useful proximity result for $\phi_t^{\delta,\e}$ and $\phi_t^0$.
	
	\begin{lemma} \label{lemma.flowcloseness}
		Suppose that $p_0$, $p_1 \in \R^{n+1}$ and $S>0$ are such that $\phi_t^{\delta,\e}(p_1), \, \phi_t^0(p_0)$ remain in a compact set $\mathcal{K}$ for all $t \in [-S,S]$. Then, there is $C_0>0$ such that
		\begin{equation*}
			\| \phi^{\delta,\e}_t (p_1) - \phi_t^0(p_0)\| \leq C_0 \left(\|p_1 - p_0\| + \e\right),
		\end{equation*}		for $t \in [-S,S]$ and $0 \leq \e \leq \delta$. 
	\end{lemma}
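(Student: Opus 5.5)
The plan is a Gronwall-type comparison in which the only Lipschitz constant used is that of the unperturbed field $F_0(s,x)=(1,f(x))$. The perturbed field $F(s,x,\delta,\e)$ is not uniformly Lipschitz in $s$: its $s$-derivative carries a factor $\e^{\ell+1}/\delta^{\ell}\cdot\delta^{-\ell}$, which may blow up. So the difference $F-F_0$ should be treated purely as a bounded forcing term.

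\textbf{Step 1 (constants on $\mathcal{K}$).} Let $\mathcal{K}_x$ be the projection of $\mathcal{K}$ onto the $x$-coordinates, a compact subset of $U$.
\begin{itemize}
\item Since $f$ is $C^1$, it is locally Lipschitz. A locally Lipschitz map is Lipschitz on a compact set, by a finite covering argument together with boundedness of $f$ on $\mathcal{K}_x$. This gives $L>0$ with $\|F_0(p)-F_0(p')\|\le L\|p-p'\|$ for $p,p'\in\mathcal{K}$. Note that $F_0$ does not depend on $s$.
\item By \ref{hyp.1-thmGC}, $h$ is $T$-periodic in $t$ and continuous. Hence $H:=\sup\{\|h(t,x,\e)\|: t\in\R,\ x\in\mathcal{K}_x,\ \e\in[0,\e']\}$ is finite, where we shrink to a compact $\e$-range if necessary. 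Periodicity means the supremum is taken over $[0,T]\times\mathcal{K}_x\times[0,\e']$, which is compact.
\item For $0\le\e\le\delta$ we have $\e^{\ell+1}/\delta^\ell\le\e$. Therefore $\|F(p,\delta,\e)-F_0(p)\|\le \e H$ for every $p\in\mathcal{K}$, uniformly in $\delta$.
\end{itemize}

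\textbf{Step 2 (integral inequality).} For $t\in[0,S]$, write
\begin{equation*}
\phi^{\delta,\e}_t(p_1)-\phi^0_t(p_0)=p_1-p_0+\int_0^t\big[F(\phi^{\delta,\e}_u(p_1))-F_0(\phi^{\delta,\e}_u(p_1))\big]du+\int_0^t\big[F_0(\phi^{\delta,\e}_u(p_1))-F_0(\phi^0_u(p_0))\big]du .
\end{equation*}
Both orbits stay in $\mathcal{K}$, so the bounds of Step 1 apply. Setting $w(t)=\|\phi^{\delta,\e}_t(p_1)-\phi^0_t(p_0)\|$, we get
\begin{equation*}
w(t)\le \|p_1-p_0\|+\e HS+L\int_0^t w(u)\,du .
\end{equation*}
Gronwall's inequality then yields $w(t)\le(\|p_1-p_0\|+\e HS)e^{LS}$. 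The case $t\in[-S,0]$ is identical, integrating backwards.

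\textbf{Step 3 (conclusion).} Take $C_0=\max\{1,HS\}\,e^{LS}$. This constant depends only on $\mathcal{K}$, $S$, $f$ and $h$, and not on $\delta$, $\e$, $p_0$ or $p_1$, which gives the claim.

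The only delicate point is avoiding any Lipschitz estimate on $F$ itself. This is handled by the splitting in Step 2, together with the inequality $\e^{\ell+1}/\delta^\ell\le\e$, which is exactly where the restriction $\e\le\delta$ is used.
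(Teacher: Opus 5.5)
Your proposal is correct and follows essentially the same route as the paper: you treat the $\frac{\e^{\ell+1}}{\delta^\ell}h$ term as a bounded forcing of size at most $\e$ times a supremum of $h$ (obtained from periodicity, compactness and $\e\le\delta$), you use only the Lipschitz constant of $f$ on $\mathcal{K}$, and you close with Gr\"onwall. The only cosmetic difference is that the paper estimates the $x$-components separately and handles the $s$-component through $|(s_1+t)-(s_0+t)|\le\|p_1-p_0\|$, whereas you work directly with the full field $F_0=(1,f)$ in $\R^{n+1}$, which amounts to the same estimate.
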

	\begin{proof}
		Denoting $\phi_t^{\delta,\e} (p_1) = (s_1+t,x_t^{\delta,\e}(p_1))$ and $\phi_t^0(p_0)(s_0+t,x_t^0(p_0))$, and considering the differential equations associated to those flows, it follows that
		\begin{equation*}
			\begin{aligned}
				\|x_t^{\delta,\e}(p_1) - x_t^0(p_0)\| &\leq \|p_1 - p_0\|+ \int_0^t \|f(x_s^{\delta,\e}(p_1)) - f(x_s^0(p_0))\| ds \\ &+ \frac{\e^{\ell+1}}{\delta^\ell} \int_0^t \left\|h\left(\frac{s_1+s}{\delta^\ell},x_s^{\delta,\e}(p_1),\e\right)\right\| ds.
			\end{aligned}
		\end{equation*}		Since $f$ is of class $C^1$, the hypothesis that the flows are contained in $\mathcal{K}$ ensures there is $L>0$ such that 
		\begin{equation*}
			\| f(x_s^{\delta,\e}(p_1)) - f(x_s^0(p_0))\| \leq L \|x_s^{\delta,\e}(p_1) - x_s^0(p_0)\|,
		\end{equation*}		for $s \in [-S,S]$. Moreover, periodicity and compactness ensures that there is $M>0$ such that 
		\begin{equation*}
			\left\|h\left(\frac{s_1+s}{\delta^\ell},x_s^{\delta,\e}(p_1),\e\right)\right\| \leq M,
		\end{equation*}		for $s \in [-S,S]$. Therefore, if $\e\leq \delta$, it follows that
		\begin{equation*}
			\|x_t^{\delta,\e}(p_1) - x_t^0(p_0)\| \leq \|p_1 - p_0\| + \int_0^t L \|x_s^{\delta,\e}(p_1) - x_s^0(p_0)\| ds + \e M t.
		\end{equation*}		Hence, an application of Grönwall's inequality yields
		\begin{equation*}
			\|x_t^{\delta,\e}(p_1) - x_t^0(p_0)\| \leq (\|p_1 - p_0\| + \e M S) e^{LS}.
		\end{equation*}		Since $\|s_1 + t - s_0 +t\| = \|s_1 - s_0\| \leq \|p_1 - p_0\|$, it is easy to see that one can choose $C_0$ as indicated in the statement of the Lemma.
	\end{proof}
	
	 Take $p \in \mathcal{M}_0$ and let $q \in \mathcal{M}_{\delta,\e}$ be the unique point for which $\pi(q) = p$. In order to prove that $\mathcal{M}_{\delta,\e}$, we will estimate its generalized Lyapunov type numbers $\nu^{\delta,\e}$ and $\sigma^{\delta,\e}$ by arguing they must be close to their unperturbed counterparts $\nu^0$ and $\sigma^0$. To do so, we first estimate $\|B_t^{\delta,\e}(q)\|_{\delta,\e}$, by comparing it with $\|B_t^0(p)\|$.
	\begin{lemma} \label{lemma.Bestimate}
		For each $S>0$, there is a non-decreasing function $C_B(\delta)\geq 1$ such that $C_B(\delta) \to 1$ as $\delta \to 0^+$ and 
		\begin{equation*}
			\|B_t^{\delta,\e}(q)\|_{\delta,\e} \leq C_B(\delta) \|B_t^0(p)\|,
		\end{equation*}		for all $t \in [-S,S]$ and all $p \in \mathcal{M}_0$, where $q \in \mathcal{M}_{\delta,\e}$ is uniquely defined by $\pi(q)=p$.
	\end{lemma}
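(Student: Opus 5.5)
The plan is to prove an \emph{additive} estimate $\|B^{\delta,\e}_t(q)-B^0_t(p)\|\lesssim \kappa(\delta)$, uniform for $|t|\le S$ and $p\in\mathcal{M}_0$, with $\kappa(\delta)\to0$, and then convert it into the multiplicative form using a uniform lower bound on $\|B^0_t(p)\|$.

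That lower bound comes first. For the unperturbed flow, $B^0_t(p)$ is a linear isomorphism $N^0_{\phi^0_{-t}(p)}\to N^0_p$. Indeed, $D\phi^0_t$ is invertible and maps $T\mathcal{M}_0$ onto $T\mathcal{M}_0$, so $\Pi_0 D\phi^0_t$ restricted to $N^0$ is injective. Its smallest singular value depends continuously on $(t,p)\in[-S,S]\times\mathcal{M}_0$. This set is compact because $\mathcal{M}_0=\R\times M_0$ is periodic in $s$ and the flow commutes with $s$-translations. Hence there is $m_S>0$ with $\|B^0_t(p)\|\ge m_S$ for all such $(t,p)$.

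Next comes the additive comparison, which has three ingredients.
\begin{enumerate}
\item[(a)] Base points. By \ref{hyp.P-thmGC} we have $\|q-p\|\le\rho(\delta)$, with $\rho(\delta)\to0$ uniformly for $\e\le\delta$. Then \Cref{lemma.flowcloseness}, applied with $\mathcal{K}$ a compact neighbourhood of a fundamental domain of $\mathcal{M}_0$, gives $\|\phi^{\delta,\e}_{-t}(q)-\phi^0_{-t}(p)\|\le C_0(\rho(\delta)+\delta)$. Combined with the closeness of $\pi$ to the identity on $\mathcal{M}_{\delta,\e}$, the fibers $I^{\delta,\e}_{\phi^{\delta,\e}_{-t}(q)}=N^0_{\pi(\phi^{\delta,\e}_{-t}(q))}$ and $N^0_{\phi^0_{-t}(p)}$ are close as subspaces of $\{0\}\times\R^n$. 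This uses the continuity of the $C^1$ bundle $N^0$ over the compact $M_0$.
\item[(b)] Derivatives of the flows. This is the step I expect to be the main obstacle. The vector field $F$ contains the term $\frac{\e^{\ell+1}}{\delta^\ell}h(s/\delta^\ell,x,\e)$, whose $s$-derivative is of size $\e^{\ell+1}/\delta^{2\ell}$ and is \emph{not} small. The way around it is to restrict to vectors with vanishing $s$-component, which is exactly where $I^{\delta,\e}$ and $N^0$ live. Since $\dot s=1$, the variational equation preserves $\delta s=0$. On such vectors it reduces to $\dot w=\big(Df(x)+\frac{\e^{\ell+1}}{\delta^\ell}D_xh\big)w$, where the extra term is $O(\e)$. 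A Gr\"onwall argument, paralleling the proof of \Cref{lemma.flowcloseness} and using (a) together with the uniform continuity of $Df$ on $\mathcal{K}$, yields $\|D\phi^{\delta,\e}_t w-D\phi^0_t w'\|\le C_1(\omega(\rho(\delta)+\delta)+\|w-w'\|)$ for $|t|\le S$. Here $\omega$ is a modulus of continuity of $Df$.
\item[(c)] Projections and norms. We have $\|\Pi_{\delta,\e}-\Pi_0\|\le\eta(\delta)$ at corresponding points, and the norm comparison \eqref{eq:thmGC-NormEq} gives factors $(1+2\eta(\delta))^{\pm1}$.
\end{enumerate}

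To combine these, take a unit vector $v\in I^{\delta,\e}$ at $\phi^{\delta,\e}_{-t}(q)$ and let $v'$ be its orthogonal projection onto $N^0_{\phi^0_{-t}(p)}$, so that $\|v-v'\|$ is small by (a). Then (b) and (c) give $\|B^{\delta,\e}_t(q)v\|\le\|B^0_t(p)v'\|+\kappa(\delta)$, with $\kappa(\delta)\to0$ built from $\rho$, $\eta$, $\omega$ and $\delta$. Passing to $\|\cdot\|_{\delta,\e}$ via \eqref{eq:thmGC-NormEq} and dividing by $m_S$, we obtain
\[
\|B^{\delta,\e}_t(q)\|_{\delta,\e}\le (1+2\eta(\delta))^2\Big(1+\frac{\kappa(\delta)}{m_S}\Big)\|B^0_t(p)\| .
\]
Finally, replace the right-hand factor by its supremum over $\delta'\le\delta$ (and over $\e\le\delta'$) and call the result $C_B(\delta)$. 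This makes $C_B$ non-decreasing, at least $1$, and tending to $1$ as $\delta\to0^+$.
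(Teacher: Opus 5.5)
Your proposal is correct and follows essentially the same route as the paper. Both arguments prove an additive bound by splitting into two terms: a flow-derivative term, handled by Gr\"onwall on the variational equation restricted to vectors with vanishing $s$-component, and a projection term $\Pi_{\delta,\e}-\Pi_0$. Both then convert this into the multiplicative constant $(1+2\eta(\delta))^2(1+\kappa(\delta)/m_S)$ using the uniform lower bound on $\|B^0_t(p)\|$ over $[-S,S]\times\mathcal{M}_0$. Your version is slightly more careful than the paper's in three places: you justify $m_S>0$, you compare vectors in the genuinely different fibres $N^0_{\pi(\phi^{\delta,\e}_{-t}(q))}$ and $N^0_{\phi^0_{-t}(p)}$ via the projection $v\mapsto v'$, and you need only uniform continuity of $Df$ rather than $C^2$ regularity of $F_0$.
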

	\begin{proof}
		
		Let $S>0$ be fixed. Let $p \in \mathcal{M}_0$ and take $q \in \mathcal{M}_{\delta,\e}$ such that $\pi(q) =p$. For any $X \in I^{\delta,\e}_{q} \subset \R^{n+1}$, consider
		\begin{equation*}
			B^{\delta,\e}_t(q) \cdot X = \Pi_{\delta,\e} \cdot D\phi^{\delta,\e}_{t} (\phi^{\delta,\e}_{-t}(q)) \cdot X,
		\end{equation*} 
		which shall be compared with $B^{0}_t(p) \cdot X$. We remind the reader that, on account of the coordinates used, $X \in I^{\delta,\e}_q$ implies that $X = (0,\tilde{X}) \in \R\times \R^{n}$,
		
		It follows from \eqref{eq:thmGC-NormEq} and the triangle inequality that, for any $t \in \R$,
		\begin{equation*} \label{eq:thmGC-norms}
			\|B_t^{\delta,\e} (q) \cdot X\|_{\delta,\e} \leq \left(1+ 2 \eta (\delta)\right) \left[\|B_t^{\delta,\e} (q) \cdot X - B^{0}_t(p) \cdot X \| + \| B^{0}_t(p) \cdot X\|\right].
		\end{equation*}
		Considering the definitions of the operators $B_t^{\delta,\e}$ and $B_t^0$, and applying once more the triangle inequality, we also have 
		\begin{equation} \label{eq:thmGC-triangleS1andS2}
			\begin{aligned}
				\|B^{\delta,\e}_t(q) \cdot X - B^{0}_t(p) \cdot X\| \leq&\| \Pi_{\delta,\e} ( D\phi^{\delta,\e}_{t} (\phi^{\delta,\e}_{-t}(q)) -  D\phi^{0}_{t} (\phi^{0}_{-t}(p))) \cdot X \| \\ 
				&+ \| (\Pi_{\delta,\e} - \Pi_0)  D\phi^{0}_{t} (\phi^0_{-t}(p)) \cdot X \|,
			\end{aligned}
		\end{equation}		where $\Pi_0$ is applied on $p = \pi(q)$.
		
		Define $I(t,p,\delta,\e)$ and $II(t,p,\delta,\e)$ as the first and second summands on the right-hand side of this \eqref{eq:thmGC-triangleS1andS2}. As remarked before, by taking $\delta>0$ sufficiently small, we can make $\Pi_{\delta,\e}$ and $\Pi_0$ become arbitrarily close. Thus, by potentially redefining $\eta(\delta)$ and considering that $\mathcal{M}_0$ is compact, we can guarantee that $\|II(t,p,\delta,\e)\| \leq \eta(\delta)\|X\|$ if $\e\leq \delta$ and $t \in [-S,S]$, where $\eta(\delta) \to 0$ as $\delta \to 0^+$.
		
		For $I(t,p,\delta,\e)$, we need to show that the difference between derivatives of the flow also goes to zero with $\delta$. First, we observe that, since $\|\Pi_0\| = 1$ and $\Pi_{\delta,\e}$ is close to $\Pi_0$, there is $C_\Pi>1$ such that
		\begin{equation*}
			\|\Pi_{\delta,\e}\| \leq C_\Pi
		\end{equation*}		holds uniformly with respect to the base-point if $\delta \in (0,\delta_0)$ and $\e\leq \delta$. Hence, an application of the triangle inequality yields
		\begin{equation*} \label{eq:thmGC-ineqS1}
			\begin{aligned}
				I(t,p,\delta,\e) \leq& C_\Pi \|  D\phi^{\delta,\e}_{t} (\phi^{\delta,\e}_{-t}(q)) \cdot X -  D\phi^{0}_{t} (\pi(\phi^{\delta,\e}_{-t}(q))) \cdot X \| \\
				&+ C_\Pi \|  D\phi^{0}_{t} (\pi(\phi^{\delta,\e}_{-t}(q))) \cdot X - D\phi^{0}_{t} (\phi^{0}_{-t}(p)) \cdot X \|.
			\end{aligned}
		\end{equation*}

		We proceed to estimating the difference between the linearized flows in the perturbed and unperturbed case. Let $\p_1 h$ and $\p_2h$ denote, respectively, the partial derivatives of $h$ with respect to its first and second entry. Also, let $s_q+t$ and $x^{\delta,\e}(q)$ denote the first and second entry of the flow $\phi_t^{\delta,\e}(q)$.
		
		Observe, on the one hand, that $D\phi^{\delta,\e}_{t} (q) \cdot X$ is the solution of the first variational equation
		\begin{equation*}
			\dot z = \left[\begin{array}{cc}
				0 & 0 \\
				\frac{\e^{\ell+1}}{\delta^{2\ell}} \; \p_1 h \left(\frac{s_{q}+t}{\delta^\ell},x_t^{\delta,\e}(q),\e\right) & Df(x_t^{\delta,\e}(q)) + \frac{\e^{\ell+1}}{\delta^\ell}\, \p_2 h\left(\frac{s_{q}+t}{\delta^\ell},x_t^{\delta,\e}(q),\e\right)
			\end{array}\right] z,
		\end{equation*}		with initial condition $z(0) = X$. Therefore, since $X = (0,\tilde{X}) \in \R\times \R^n$, the first entry of $z(t)$ must also vanish for all $t \in \R$. In that case, $z(t) = (0,\tilde{z}(t)) \in \R \times \R^n$, where
		\begin{equation*}
			\dot {\tilde{z}} = \left(Df(x_t^{\delta,\e}(q)) + \frac{\e^{\ell+1}}{\delta^\ell} \partial_2 h\left(\frac{s_{q}+t}{\delta^\ell},x_t^{\delta,\e}(q),\e\right)\right) \tilde {z}, \quad \tilde {z}(0) = \tilde{X}.
		\end{equation*}		
		On the other hand, again since $X=(0,\tilde{X})$, an analogous argument ensures that $D\phi_t^0(p) \cdot X$ is of the form $w(t) = (0,\tilde{w}(t))$, with 
		\begin{equation*}
			\dot {\tilde{w}} = Df (x_t^0(p)) \tilde w, \quad \tilde{w}(0) = \tilde{X}.
		\end{equation*}		In particular, continuity and compactness yield $C_w>0$ such that 
		\begin{equation*}
			\| \tilde{w}(t)\| = \|w(t)\| = \| D\phi_t^0(p) \cdot X\| \leq C_w \|X\|,
		\end{equation*}		for all $t \in [-S,S]$ and all $p \in \mathcal{M}_0$.
		
		Defining $\tilde{u} = \tilde{z} - \tilde{w}$, it is easy to see that it must satisfy the equation
		\begin{equation*} \label{eq:thmGC-uDifEq}
			\dot{\tilde{u}} = W(t,s_{q},x_t^{\delta,\e}(q),\delta,\e) \cdot \tilde{u} + E(t,s_{q},x_t^{\delta,\e}(q),x_t^0(p),\delta,\e) \cdot \tilde{w},
		\end{equation*}
		where 
		\begin{equation*}
			W(t,s,x,\delta,\e) : = Df(x) + \frac{\e^{\ell +1}}{\delta^\ell} \p_2 h \left(\frac{s+t}{\delta^\ell},x,\e\right) 
		\end{equation*}		and 
		\begin{equation*}
			E(t,s,x,x_0,\delta,\e) : = Df(x) - Df(x_0) +  \frac{\e^{\ell +1}}{\delta^\ell} \p_2 h \left(\frac{s+t}{\delta^\ell},x,\e\right).
		\end{equation*}		Considering that $q \in \mathcal{M}_{\delta,\e}$, $p \in \mathcal{M}_0$, and that $\p_2 h$ is periodic in its first entry, it follows from the fact that the family of manifolds $\{\mathcal{M}_{\delta,\e}\}_{0\leq\e\leq \delta\leq \delta_0}$ can be contained inside a compact set that there are $L,\, M>0$, independent of the choice of $p \in \mathcal{M}_0$, such that, if $\e \leq \delta$, then 
		\begin{equation*}
			\begin{aligned}
				&\|  W(t,s_{q},x_t^{\delta,\e}(q),\delta,\e)\| \leq M, \\ &\|E(t,s_{q},x_t^{\delta,\e}(q),x_t^0(p),\delta,\e)\| \leq L \left(\|x_t^{\delta,\e}(q) - x_t^0(p)\| + \e\right).
			\end{aligned}
		\end{equation*}
		The second of those inequalities implies, using \Cref{lemma.flowcloseness}, that there is $C_1>0$ such that
		\begin{equation*}
			\|E(t,s_{q},x_t^{\delta,\e}(q),x_t^0(p),\delta,\e)\| \leq L \left[ C_0 \|q- p\| + (1+C_0) \e \right] C_w \|X\|,
		\end{equation*}
		for $t \in [-S,S]$ and $\e \leq \delta$. 
		
		Hence, by integrating \cref{eq:thmGC-uDifEq} and applying Grönwall's inequality, it follows that there is $C_1>0$ such that
		\begin{equation*}
			\|\tilde{u}(t)\| \leq C_1 \left(\|q - p\| + \e \right) \|X\|,
		\end{equation*}
		for $t \in [-S,S]$ and $\e \leq \delta$. Since $q \to p$ as $\e \to 0^+$ uniformly on $\delta$ on account of \ref{hyp.P-thmGC}, by potentially redefining $\eta(\delta)$, we guarantee that
		\begin{equation*} \label{eq:thmGC-derivativeofflowproximityX}
			\|D\phi_t^{\delta,\e}(q) \cdot X - D\phi_t^0(p)\cdot X\| \leq \eta(\delta) \|X\|
		\end{equation*}
		if $\e \leq \delta$ and $t \in [-S,S]$, where $\eta(\delta) \to 0$ as $\delta \to 0^+$. Thus, by choosing $q \to \phi_{-t}^{\delta,\e} (q)$, we can bound the first summand on the right-hand side of \cref{eq:thmGC-ineqS1} for $t \in [-S,S]$:
		\begin{equation*}
			\|  D\phi^{\delta,\e}_{t} (\phi^{\delta,\e}_{-t}(q)) \cdot X -  D\phi^{0}_{t} (\pi(\phi^{\delta,\e}_{-t}(q))) \cdot X \| \leq \eta(\delta) \|X\|.
		\end{equation*}
		
		To estimate its second summand, we observe that $\phi_t^0$ is the flow of the $C^2$ vector field $F_0(x)$, so that $D\phi^0_t(p)$ is of class $C^1$ with respect to $(t,p)$. Thus, since $\pi(\phi^{\delta,\e}_{t}(q))$ and $\phi_t^0(p)$ are in the compact set $\mathcal{M}_0$ for any $t \in \R$, it follows that there is $L'>0$, independent of $p$, such that
		\begin{equation*}
			\|  D\phi^{0}_{t} (\pi(\phi^{\delta,\e}_{-t}(q))) \cdot X - D\phi^{0}_{t} (\phi^{0}_{-t}(p)) \cdot X \| \leq L' \| \pi(\phi^{\delta,\e}_{-t}(q)) - \phi^{0}_{-t}(p)\| \|X\|,
		\end{equation*}
		for all $t \in [-S,S]$ and $p \in \mathcal{M}_0$. Then, an application of the triangle inequality yields
		\begin{equation*}
			\| \pi(\phi^{\delta,\e}_{-t}(q)) - \phi^{0}_{-t}(p)\| \leq \| \pi(\phi^{\delta,\e}_{-t}(q)) - \phi^{\delta,\e}_{-t}(q)\| + \| \phi^{\delta,\e}_{-t}(q) - \phi^{0}_{-t}(p)\|,
		\end{equation*}
		both summands tending to zero uniformly on $p$ and $\delta$ as $\e \to 0^+$, the first on account of \ref{hyp.P-thmGC} alone and the second on account of the same hypothesis combined with \Cref{lemma.flowcloseness}. Thus, by potentially redefining $\eta(\delta)$ one last time, we can guarantee that 
		\begin{equation*}
			\|I(t,p,\delta,\e) \| \leq \eta(\delta) \|X\|
		\end{equation*}
		if $\e \leq \delta$ and $t \in [-S,S]$, where $\eta(\delta) \to 0$ as $\delta \to 0^+$.
		
		Taking into account \cref{eq:thmGC-norms}, \cref{eq:thmGC-triangleS1andS2} and the estimates found for $I$ and $II$, we obtain
		\begin{equation*} \label{eq:thmGC-Best1}
			\begin{aligned}
				\|B_t^{\delta,\e} (q) \cdot X\|_{\delta,\e} &\leq \left(1+ 2 \eta (\delta)\right) \left[3 \eta(\delta) \|X\| + \| B^{0}_t(p) \cdot X\|\right],
			\end{aligned}
		\end{equation*}
		for $t \in [-S,S]$ and $p \in \mathcal{M}_0$.
		
		Define
		\begin{equation*}
			m^0_{B,S} := \inf \{\|B_t^0(p)\|: t \in [-S,S], p \in \mathcal{M}_0 \} > 0 
		\end{equation*}
		and observe that it follows from \cref{eq:thmGC-Best1} that
		\begin{equation*}\label{eq:thmGC-BboundC}
			\frac{\|B_t^{\delta,\e} (q) \cdot X\|_{\delta,\e}}{\|X\|_{\delta,\e}} \leq (1+2\eta(\delta))^2 \left[3\eta(\delta) + \|B_t^0(p)\| \right] \leq C_B(\delta) \|B_t^0(p)\|,
		\end{equation*}
		for all $t \in [-S,S]$ and all $p \in \mathcal{M}_0$, where 
		\begin{equation*}
			C_B(\delta) : = (1+2\eta(\delta))^2 \left(1 + \frac{3 \eta(\delta)}{m_{B,S}^0}\right)
		\end{equation*}
		is such that $C(\delta)\to 1$ as $\delta \to 0^+$.
	\end{proof}

	We continue the proof by estimating the generalized Lyapunov type number $\nu^{\delta,\e}$. Considering that our proximity results only hold provided that we choose a compact interval $[-S,S]$ for $t$ to be in, we need to estimate the limits with $t \to \infty$ appearing in the definitions of normal hyperbolicity with finite-time expressions. This is achieved via submultiplicativity, as follows.
	
	Since the tangent bundle $T\mathcal{M}_{\delta,\e}$ is invariant under $D\phi^{\delta,\e}_t$, it follows by an application of the chain rule that
	\begin{equation*} \label{eq:thmGC-Bcomp}
		B^{\delta,\e}_{t+s}(q) = \Pi_{\delta,\e} \cdot D\phi^{\delta,\e}_{t} (\phi^{\delta,\e}_{-t}(q)) \cdot \Pi_{\delta,\e} \cdot  D\phi_s^{\delta,\e}(\phi^{\delta,\e}_{-t-s}(q))|_{I_{\delta,\e}},
	\end{equation*}
	for any $t,s \in \R$. Thus, by letting $\|B_t^{\delta,\e}\|_{\delta,\e} :=\sup\{\|B_t^{\delta,\e}(q)\|_{\delta,\e}:q \in \mathcal{M}_{\delta,\e}\}$, we obtain
	\begin{equation*} \label{eq:thmGC-Bchainrule}
		\|B^{\delta,\e}_{t+s}\|_{\delta,\e} \leq \|B_t^{\delta,\e}\|_{\delta,\e} \times \|B_s^{\delta,\e}\|_{\delta,\e},
	\end{equation*}
	for any $t,s \in \R$.
	
	Fix $S>0$ and write any $t>0$ as $ t = Q S + R$, where $ Q \in \mathbb{N}$ and $R \in [0,S)$. Then, applying \cref{eq:thmGC-Bchainrule} multiple times, we have
	\begin{equation*}
		\|B_t^{\delta,\e}\|_{\delta,\e}^{\frac{1}{t}} \leq \|B_S^{\delta,\e}\|_{\delta,\e}^{\frac{Q}{QS + R}} \, \|B_{R}^{\delta,\e}\|_{\delta,\e}^{\frac{1}{QS + R}}.
	\end{equation*}
	Taking the $\limsup$ as $t \to +\infty$, i.e., as $Q \to +\infty$, it follows from \Cref{lemma.lypaunovnumberseqdef} that
	\begin{equation} \label{eq:thmGC-ineqmu}
		\nu^{\delta,\e}(q) = \limsup_{t\to +\infty} \|B_t^{\delta,\e}(q)\|_{\delta,\e}^{\frac{1}{t}} \leq \|B_S^{\delta,\e}\|_{\delta,\e}^{\frac{1}{S}},
	\end{equation}
	for any $q \in \mathcal{M}_{\delta,\e}$. We remark that this inequality holds for any choice of $S>0$. Instead of calculating $\nu^{\delta,\e}(q)$ directly, we will find $S>0$ such that the estimate above already guarantees relative normal hyperbolicity for small $\delta$.

	Since $M_0$ is relatively $r_H$-normally hyperbolic by hypothesis, so must be $\mathcal{M}_0$. Thus, $\nu^0(p) <1$ for every $p \in \mathcal{M}_0$, and the \hyperref[lemma.uniformitylemma]{Uniformity Lemma} guarantees that there are $\kappa \in (0,1)$ and $c_0>0$ such that $\|B_t^0\| \leq c_0 \kappa^t$ for all $t \geq 0$. In particular, by taking $S>1$ sufficiently large, we can ensure that 
	\begin{equation*}
		\|B_S^0\|^{\frac{1}{S}} \leq c_0^{\frac{1}{S}} \kappa < 1,
	\end{equation*}
	where $\|B_t^{0}\| :=\sup\{\|B_t^0(p)\|:p \in \mathcal{M}_{0}\}$.	
	
	Hence, applying \Cref{lemma.Bestimate} to \eqref{eq:thmGC-ineqmu} with this choice of the constant $S$, it follows at once that, for any $q \in \mathcal{M}_{\delta,\e}$, if $\e \leq \delta$ and $\delta$ is sufficiently small, then
	\begin{equation*}
		\nu^{\delta,\e}(q) \leq C(\delta)^{\frac{1}{S}} \|B_S^0\|^{\frac{1}{S}} \leq \left(C(\delta) c_0\right)^{\frac{1}{S}} \kappa.
	\end{equation*}
	It is easy to see that the right-hand side of this equation can be made arbitrarily close to $c^\frac{1}{S}_{0} \kappa$ by taking smaller $\delta$. Consequently, if $\e \leq \delta$ and $\delta$ is sufficiently small, then
	\begin{equation*}
		\nu^{\delta,\e} (q) < 1,
	\end{equation*}
	for all $q \in \mathcal{M}_{\delta,\e}$. We remark that the upper bound for $\nu^{\delta,\e}$ can be made as close to $\kappa$ as wanted by first choosing $S>1$ and then adequately restricting the size of $\delta$.

	Henceforth, we consider the second Lyapunov type number $\sigma^{\delta,\e}(q)$. For this, we need concern ourselves with vectors that are tangent to $\mathcal{M}_{\delta,\e}$. Let $Q^{\delta,\e} \subset T\mathcal{M}_{\delta,\e}$ be the bundle of vectors having zero as their first entry. Considering that the foliation whose leaves are given by $s = \textit{constant}$ is invariant under the flow of \cref{eq:continuationfamilyResAut}, it follows that $Q^{\delta,\e}$ is invariant under the linearised flow. Furthermore, by letting $J^{\delta,\e}$ denote the vector bundle generated by the vector field $F(s,x,\delta,\e)$ appearing in \cref{eq:continuationfamilyResAut}, it is easy to see that 
	\begin{equation*}
		T \mathcal{M}_{\delta,\e} = Q^{\delta,\e} \oplus J^{\delta,\e}. 
	\end{equation*}

	As before, let $\Pi^Q_{\delta,\e}:T\mathcal{M}_{\delta,\e} \to Q^{\delta,\e}$ and $\Pi^J_{\delta,\e}: T\mathcal{M}_{\delta,\e} \to J^{\delta,\e}$ denote the projections corresponding to the above-mentioned splitting. It is clear that, since $Q_q^{\delta,\e}$ and $J^{\delta,\e}$ tend to $Q^0_p$ and $J^0_p$ uniformly because of \ref{hyp.P-thmGC} and compactness of $\mathcal{M}_0$, there is $C_{\|\cdot\|}>0$ such that, for any $\delta \in (0,\delta_0)$, $\e \leq \delta$, $q \in \mathcal{M}_{\delta,\e}$, $Y \in Q_q^{\delta,\e}$, and $Z \in J_q^{\delta,\e}$, the following holds:
	\begin{equation*} \label{eq:thmGC-normestimate}
		\|Y\| +\|Z\| \leq \|\Pi^Q_{\delta,\e}(q) \cdot (Y+Z)\| +  \|\Pi^J_{\delta,\e}(q) \cdot (Y+Z)\| \leq C \| Y+Z\|.
	\end{equation*}
	
	In order to proceed, we prove an estimate similar to the one presented in \Cref{lemma.Bestimate} but concerning the operator $A^{\delta,\e}_t$.
	
	\begin{lemma} \label{lemma.Aestimate}
		For each $S>0$, there is a non-decreasing function $C_A(\delta)\geq 1$ such that $C_A(\delta) \to 1$ as $\delta \to 0^+$ and 
		\begin{equation*}
			\|A_t^{\delta,\e}(q)\|_{\delta,\e} \leq C_A(\delta) \|A_t^0(p)\|,
		\end{equation*}
		for all $t \in [-S,S]$ and all $p \in \mathcal{M}_0$, where $q \in \mathcal{M}_{\delta,\e}$ is uniquely defined by $\pi(q)=p$.
	\end{lemma}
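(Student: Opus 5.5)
We follow the structure of the proof of \Cref{lemma.Bestimate}, now applied to tangent vectors and the backward flow. Fix $S>0$, $p\in\mathcal{M}_0$ and $q=\pi|_{\mathcal{M}_{\delta,\e}}^{-1}(p)$, and take $Y\in T_q\mathcal{M}_{\delta,\e}$. By \eqref{eq:thmGC-NormEq} we have
\begin{equation*}
\|A_t^{\delta,\e}(q)Y\|_{\delta,\e}\le (1+2\eta(\delta))\bigl[\|D\phi^{\delta,\e}_{-t}(q)Y - D\phi^0_{-t}(p)Y_0\| + \|A^0_t(p)Y_0\|\bigr],
\end{equation*}
where $Y_0\in T_p\mathcal{M}_0$ is a tangent vector associated with $Y$. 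The natural choice is $Y_0=D\pi(q)Y$, which is an isomorphism $T_q\mathcal{M}_{\delta,\e}\to T_p\mathcal{M}_0$. By \ref{hyp.P-thmGC}, this choice gives $\|Y-Y_0\|\le \eta(\delta)\|Y\|$ after enlarging $\eta$ if necessary. We also use the fact, from the proof of \Cref{lemma.Bestimate}, that $T_q\mathcal{M}_{\delta,\e}$ converges to $T_p\mathcal{M}_0$ uniformly.

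It is important that $A_t$ is just the restriction of $D\phi_{-t}$ to the invariant tangent bundle. Unlike the $B$ case, no projection enters, and $A_t^{\delta,\e}(q)Y$ is exactly $D\phi^{\delta,\e}_{-t}(q)Y$.

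The core step is to bound $\|D\phi^{\delta,\e}_{-t}(q)Y - D\phi^0_{-t}(p)Y\|$ for $t\in[-S,S]$. This is the main obstacle, and it is why the splitting $T\mathcal{M}_{\delta,\e}=Q^{\delta,\e}\oplus J^{\delta,\e}$ and the constant $C_{\|\cdot\|}$ were introduced. For vectors with zero first entry, namely $Y\in Q^{\delta,\e}$, the variational equation decouples exactly as in \Cref{lemma.Bestimate}. The same Grönwall argument, combined with \Cref{lemma.flowcloseness} run backward in time, gives a bound of $\eta(\delta)\|Y\|$. The $J$-component is different, because the first column of the variational matrix contains the term $\frac{\e^{\ell+1}}{\delta^{2\ell}}\partial_1 h$, which is of order $\e/\delta^\ell$ and is not small. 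We avoid linearizing this component. Invariance gives $D\phi_{-t}F(q)=F(\phi_{-t}(q))$ exactly for both flows, so for $Z=cF(q,\delta,\e)\in J^{\delta,\e}$ we have
\begin{equation*}
D\phi^{\delta,\e}_{-t}(q)Z - D\phi^0_{-t}(p)\,cF_0(p) = c\bigl[F(\phi^{\delta,\e}_{-t}(q)) - F_0(\phi^0_{-t}(p))\bigr].
\end{equation*}
The right-hand side is bounded by $|c|(L\|\phi^{\delta,\e}_{-t}(q)-\phi^0_{-t}(p)\| + M\e^{\ell+1}/\delta^\ell)$. Using \Cref{lemma.flowcloseness} and $\e\le\delta$, this is at most $\eta(\delta)|c|$ once $\ell\ge1$. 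To compare $Z$ with $cF_0(p)$, and $Y_0$ with its decomposition, we again use $|F(q)-F_0(p)|\le \eta(\delta)$.

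Summing the $Q$ and $J$ pieces and applying $C_{\|\cdot\|}$, we obtain $\|A_t^{\delta,\e}(q)Y - A^0_t(p)Y_0\|\le \eta(\delta)\|Y\|$ uniformly for $t\in[-S,S]$ and $p\in\mathcal{M}_0$, after redefining $\eta$. Finally, set
\begin{equation*}
m^0_{A,S}=\inf\{\|A^0_t(p)\|: |t|\le S,\ p\in\mathcal{M}_0\}.
\end{equation*}
This infimum is positive because $A^0_t(p)$ is invertible and depends continuously on $(t,p)$ over a compact set. Dividing by $\|Y\|_{\delta,\e}$ and using \eqref{eq:thmGC-NormEq} gives
\begin{equation*}
\|A_t^{\delta,\e}(q)\|_{\delta,\e}\le C_A(\delta)\|A^0_t(p)\|, \qquad C_A(\delta)=(1+2\eta(\delta))^2\Bigl(1+\frac{c\,\eta(\delta)}{m^0_{A,S}}\Bigr),
\end{equation*}
for a suitable constant $c>0$. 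This $C_A$ is nondecreasing in $\delta$ and tends to $1$ as $\delta\to0^+$.
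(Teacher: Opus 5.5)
Your proposal is correct and follows essentially the same route as the paper. Like the paper, you split $T\mathcal{M}_{\delta,\e}=Q^{\delta,\e}\oplus J^{\delta,\e}$, treat the $Q$-component through the decoupled variational equation and Gr\"onwall as in \Cref{lemma.Bestimate}, treat the $J$-component through the exact identity $D\phi_{-t}(q)F(q)=F(\phi_{-t}(q))$ (which avoids the non-small entry $\frac{\e^{\ell+1}}{\delta^{2\ell}}\p_1 h$), and conclude via $C_{\|\cdot\|}$, \eqref{eq:thmGC-NormEq} and $m^0_{A,S}>0$. Your comparison vector $Y_0=D\pi(q)Y\in T_p\mathcal{M}_0$ is a welcome refinement: the paper applies $A^0_t(p)$ directly to $Y+\lambda F_0(p)$, which in general does not lie in $T_p\mathcal{M}_0$, and your choice makes the final operator-norm comparison with $\|A^0_t(p)\|$ legitimate.
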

	\begin{proof}
	Let $S>0$ be fixed. Let $p \in \mathcal{M}_0$ and take $q \in \mathcal{M}_{\delta,\e}$ such that $\pi(q) =p$. Since every vector in $Q^{\delta,\e}$ has vanishing first entry, an argument similar to the one employed in \Cref{lemma.Bestimate} can be used to obtain
	\begin{equation*} \label{eq:thmGC-derivativeofflowproximityY}
		\|D\phi_t^{\delta,\e}(q) \cdot Y - D\phi_t^0(p)\cdot Y\| \leq \eta(\delta) \|Y\|
	\end{equation*}
	if $\e \leq \delta$, $t \in [-S,S]$ and $Y \in Q_q^{\delta,\e}$, where $\eta(\delta) \to 0$ as $\delta \to 0^+$. It is easy to see that, by definition of $A_t^{\delta,\e}$, this is equivalent to 
	\begin{equation*} \label{eq:thmGC-AestiY}
		\|A_t^{\delta,\e}(q) \cdot Y - A_t^0(p)\cdot Y\| \leq \eta(\delta) \|Y\|,
	\end{equation*}
	for $t \in [-S,S]$.

	All that remains is investigating how $A_t^{\delta,\e}$ acts on $J^{\delta,\e}$. Observe that any vector $Z \in J_q^{\delta,\e}$ must be of the form $\lambda F(q,\delta,\e)$, with $\lambda \in \R$. Likewise, any vector $Z_0 \in J_p^0$ is of the form $\lambda F_0(p)$, with $\lambda \in \R$. Thus we will focus on studying the action of $A_t^{\delta,\e}$ on $F$, comparing it with the action of $A_t^0$ on $F_0$.
	
	Since the vector field is always a solution of the corresponding first variational equation, it follows that
	\begin{equation*}
		A_{t}^{\delta,\e}(q) \cdot F(q,\delta,\e) = F(\phi^{\delta,\e}_{-t}(q),\delta,\e), \quad  \text{and} \quad A_{t}^{\delta,\e}(p) \cdot F_0(p) = F_0(\phi^{0}_{-t}(p)).
	\end{equation*}
	for any $t \in \R$. Therefore,
	\begin{equation*}
		\begin{aligned}
			\left\| A_{t}^{\delta,\e}(q) \cdot F(q,\delta,\e) - A_{t}^{\delta,\e}(p) \cdot F_0(p)  \right\| \leq& \|F(\phi^{\delta,\e}_{-t}(q),\delta,\e) - F_0(\phi^{\delta,\e}_{-t}(q))\| \\ 
			&+\| F_0(\phi^{\delta,\e}_{-t}(q)) -  F_0(\phi^{0}_{-t}(p))\|
		\end{aligned}
	\end{equation*}
	which, on account of compactness of $\{\mathcal{M}_{\delta,\e}\}_{0\leq\e\leq \delta\leq \delta_0}$, periodicity of $h$ in its first entry, and \Cref{lemma.flowcloseness}, implies that there is $L''>0$, independent of $p \in \mathcal{M}_0$ such that
	\begin{equation*}
		\begin{aligned}
				\left\| A_{t}^{\delta,\e}(q) \cdot F(q,\delta,\e) - A_{t}^{\delta,\e}(p) \cdot F_0(p)  \right\| \leq L'' \left(\frac{\e^{\ell+1}}{\delta^\ell} + \e + \|q-p\| \right),
		\end{aligned}
	\end{equation*}
	for any $t \in [-S,S]$ and any $p \in \mathcal{M}_0$. Finally, considering \ref{hyp.P-thmGC} and by potentially redefining $\eta(\delta)$ once again, we obtain
	\begin{equation*}\label{eq:thmGC-AestiJ}
			\left\| A_{t}^{\delta,\e}(q) \cdot F(q,\delta,\e) - A_{t}^{\delta,\e}(p) \cdot F_0(p)  \right\| \leq \eta(\delta) \leq \eta(\delta) \|F(q,\delta,\e)\|,
	\end{equation*}
	if $\e \leq \delta$ and $t \in [-T,T]$, because $\|F(q,\delta,\e)\| \geq 1$ for all $q \in \mathcal{M}_{\delta,\e}$.
	
	We can thus estimate $\|A_t^{\delta,\e}\|_{\delta,\e}$ for $t \in [-S,S]$, by taking $Y+\lambda F(q,\delta,\e) \in Q_q^{\delta,\e} \oplus J^{\delta,\e}$, with $\lambda \in \R$, and writing
	\begin{equation*}
		\begin{aligned}
			\|A_t^{\delta,\e}(q) \cdot (Y+ \lambda F(q,\delta,\e)) \|_{\delta,\e} \leq& (1+2\eta(\delta)) \|A_t^{\delta,\e}(q) \cdot Y - A_t^0(p) \cdot Y\| \\
			&+(1+2\eta(\delta))\lambda \|A_t^{\delta,\e}(q)\cdot F(q,\delta,\e) - A_t^0(p) F_0(p) \| \\
			&+\|A_t^0(p) \cdot (Y+ \lambda F_0(p))\|.
		\end{aligned}
	\end{equation*}
	Taking into account the estimates \cref{eq:thmGC-AestiY,eq:thmGC-AestiJ}, it follows that 
	\begin{equation*}
		\begin{aligned}
			\|A_t^{\delta,\e}(q) \cdot (Y+ \lambda F(q,\delta,\e)) \|_{\delta,\e} \leq& (1+2\eta(\delta)) \eta(\delta) \left(\|Y\| + \|\lambda F(q,\delta,\e)\|\right)   \\ &+ \|A_t^0(p) \cdot (Y+ \lambda F_0(p))\|.
		\end{aligned}
	\end{equation*}
	Then, by dividing by $\|Y+\lambda F(q,\delta,\e)\|_{\delta,\e}$ and considering \cref{eq:thmGC-NormEq,eq:thmGC-normestimate}, it follows that
	\begin{equation*} \label{eq:thmGC-estimateA.1}
		\|A_t^{\delta,\e}(q) \|_{\delta,\e} \leq \eta(\delta) (1+2\eta(\delta))^2 + \|A_t^0(p)\| \frac{\|Y+\lambda F_0(p)\|}{\|Y+\lambda F(q,\delta,\e)\|}.
	\end{equation*}
	The fraction on the right-hand side satisfies
	\begin{equation*}
		\frac{\|Y+\lambda F_0(p)\|}{\|Y+\lambda F(q,\delta,\e)\|} \leq 1+ \frac{\|\lambda F_0(p) - \lambda F(q,\delta,\e)\|}{\|Y + \lambda F(q,\delta,\e)\|} \leq 1 + \lambda \frac{\| F_0(p) - F(q,\delta,\e)\| }{\lambda},
	\end{equation*}
	which certainly tends to 1 uniformly on $p$ and $\delta$ as $\e \to 0^+$. Therefore, since $m^0_{A,S}:=\inf \{\|A_t^0(p)\| : t \in [-S,S], p \in \mathcal{M}_0\} >0$, we can proceed as in \Cref{lemma.Bestimate} and conclude from \cref{eq:thmGC-estimateA.1} that, if $\e \leq \delta$, then
	\begin{equation*} \label{eq:thmGC-AboundC}
		\|A_t^{\delta,\e}(q) \|_{\delta,\e} \leq C_A(\delta) \|A_t^0(p)\|,
	\end{equation*}
	for all $t \in [-S,S]$ and all $p \in \mathcal{M}_0$, where $C_A(\delta) \to 1$ as $\delta \to 0^+$.
	\end{proof}
	
	We proceed to estimating $\sigma^{\delta,\e}(q)$. Since $\sigma^0(p) < 1/r_H$ for any $p \in \mathcal{M}_0$ because of \ref{hyp.2'-thmGC}, the \hyperref[lemma.uniformitylemma]{Uniformity Lemma} ensures that there is $b < 1/r_H$ such that
	\begin{equation*}
		\lim_{t \to \infty} \|A_t^0(p)\| \|B_t^0(p)\|^b = 0
	\end{equation*} 
	uniformly for $p \in \mathcal{M}_0$. Hence, there is $S>1$ such that 
	\begin{equation*} \label{eq:thmGC-Tchoicesigma}
		\|A_S^0(p)\| \|B_S^0(p)\|^b \leq \frac{1}{4},
	\end{equation*}
	for all $p \in \mathcal{M}_0$.
	
	For $i \in \mathbb{N}$, let $q_i := \phi_{-iT}(q) \in \mathcal{M}_{\delta,\e}$ and $p_i:=\pi(q_i)$. Once again, we write any $t>0$ as $t = QS +R$, where $Q \in \N$ and $R \in [0,S)$. Reasoning as we did in \cref{eq:thmGC-Bcomp}, we obtain
	\begin{equation*}
		\|B_t^{\delta,\e}(q)\|_{\delta,\e} \leq \|B_S^{\delta,\e}(q_0)\|_{\delta,\e} \cdots \|B_S^{\delta,\e}(q_{Q-1})\|_{\delta,\e} \|B_R^{\delta,\e}(q_Q)\|_{\delta,\e}.
	\end{equation*}
	A similar argument guarantees that 
	\begin{equation*}
		\|A_t^{\delta,\e}(q)\|_{\delta,\e} \leq \|A_S^{\delta,\e}(q_0)\|_{\delta,\e} \cdots \|A_S^{\delta,\e}(q_{Q-1})\|_{\delta,\e} \|A_R^{\delta,\e}(q_Q)\|_{\delta,\e}.
	\end{equation*}
	Therefore, taking into account \Cref{lemma.Bestimate,lemma.Aestimate}, it follows that 
	\begin{equation*} \label{eq:thmGC-sigmaestimate1}
		\|A_t^{\delta,\e}(q)\|_{\delta,\e} \|B_t^{\delta,\e}(q)\|^b_{\delta,\e}  \leq \left(C(\delta)\right)^{(b+1)(Q+1)} \|A_R^0(p_Q)\| \|B_R^0(p_Q)\|^b \, P^b_{p,Q},
	\end{equation*}
	where $P^b_{p,Q}$ is defined by
	\begin{equation*}
		P^b_{p,Q}: = \prod_{i=0}^{Q-1} \|A_S^0(p_i)\| \|B_S^0(p_i)\|^b.
	\end{equation*}
	
	Considering \cref{eq:thmGC-Tchoicesigma}, it follows at once that 
	\begin{equation*}
		P^b_{p,Q} \leq \frac{1}{4^Q}.
	\end{equation*}
	Moreover, since $p_Q$ is in the compact set $\mathcal{M}_0$, and since $R \in [0,S]$, there is $C_{b}>0$ such that
	\begin{equation*}
		\|A_R^0(p)\| \|B_R^0(p)\|^b \leq C_b,
	\end{equation*}
	for all $p \in \mathcal{M}_0$ and all $R \in [0,S]$.
	Thus, it follows from \cref{eq:thmGC-sigmaestimate1} that
	\begin{equation*}
		\|A_t^{\delta,\e}(q)\|_{\delta,\e} \|B_t^{\delta,\e}(q)\|^b_{\delta,\e}  \leq C_b C(\delta)^{(b+1)} \left(\frac{C(\delta)^{(b+1)}}{4}\right)^Q.
	\end{equation*}
	By choosing $\delta$ sufficiently small, we can guarantee $C(\delta)^{(b+1)} <2$. Hence, considering that $Q \to +\infty$ as $t \to +\infty$, we obtain
	\begin{equation*}
		\lim_{t \to +\infty} \|A_t^{\delta,\e}(q)\|_{\delta,\e} \|B_t^{\delta,\e}(q)\|^b_{\delta,\e} = 0,
	\end{equation*}
	which ensures that $\sigma^{\delta,\e}(q) \leq b <1/r_H$ for all $q \in \mathcal{M}_{\delta,\e}$. 
	
	We have thus proved that $\nu^{\delta,\e}(q) <1$ and $\sigma^{\delta,\e}(q)<1/r_H$ for all $q \in \mathcal{M}_{\delta,\e}$ if $\delta$ is sufficiently small and $\e \leq \delta$. By definition, $\mathcal{M}_{\delta,\e}$ is thus relatively $r_H$-normally hyperbolic. This concludes the proof of the relative case by the argument laid out in the first paragraph of the proof.
	
	The case of absolute normal hyperbolicity is proved similarly, by considering the inequality 
	\begin{equation*}
		\nu^{\delta,\e}(q) \leq \|B_S^{\delta,\e}\|_{\delta,\e}^{\frac{1}{S}} \leq \left(C(\delta)\right)^{\frac{1}{S}} \|B_S^0\|^{\frac{1}{S}} \leq \left(C(\delta)\right)^{\frac{1}{S}} K_B^{\frac{1}{S}} e^{-\rho_N},
	\end{equation*}
	which follows from \cref{eq:thmGC-BboundC}, \Cref{lemma.Bestimate} and the definition of absolute normal hyperbolicity. Then, for each $\tilde{\rho}_N<\rho_N$, we can choose $S>0$ sufficiently large and $\delta>0$ sufficiently small such that $\nu^{\delta,\e}(q) \leq e^{-\tilde{\rho}_N}$ for all $q \in \mathcal{M}_{\delta,\e}$. Reasoning likewise for $A_t^{\delta,\e}$, we conclude that, for any $\tilde{\rho}_M >\rho_M$, we can choose $S$ and $\delta$ such that $\xi_{\pm}^{\delta,\e}(q) \leq e^{\tilde{\rho}_M}$ as well. Since \ref{hyp.2'-thmGC} ensures that $r_H \rho_M<\rho_N$, the new exponents can be chosen to satisfy $r_H \tilde{\rho}_M < \tilde{\rho}_N$, in which case $\mathcal{M}_{\delta,\e}$ is absolutely $r_H$-normally hyperbolic if $\e\leq\delta$ by \Cref{lemma.verifyabsNH}.

\section{Invariant manifolds in singular families of differential systems} \label{sec:existingresultshenry}

The application of \Cref{thm:GC} is contingent on \textit{a priori} knowledge of the existence of a certain family of invariant manifolds. In the case of the tori treated in this paper, this knowledge is provided by a result ultimately due to Henry \cite{Henry1981}, but which can be traced back to the works of Bogolyubov and Mitropolsky \cite{BM}, and Hale \cite{hale61,Hale}. In this section, we present a version adapted to our setting.

Let $k,n \in \mathbb{N}^*$ and denote by $\mathcal{B}_n(x_0,\rho) \subset \R^n$ the open ball centered at $x_0$ with radius $\rho>0$. Consider the following family of differential systems:
\begin{equation} \label{system.statement}
	\begin{aligned}
		&\dot \theta = w(\theta) + \Theta (t,\theta, {\bf h} ,\delta,\e),\\
		&\dot {\bf h} = H(\theta)\cdot {\bf h} + \zeta (t,\theta, {\bf h},\delta,\e),
	\end{aligned}
\end{equation}
where $\rho_0,\delta_0,\e_0>0$, $E(\delta_0,\e_0) = \{(\delta,\e) \in (0,\delta_0) \times (0,\e_0):\e \leq \delta \}$, and $(\theta,{\bf h}, \delta, \e) \in \R^k \times \mathcal{B}_{n}(0,\rho_0) \times E(\delta_0,\e_0)$. For convenience, we will let $Y(t,t_0,\theta_0)$ denote the solution of the initial value problem
\begin{equation*}
	\dot \theta = w(\theta), \quad \theta(t_0) = \theta_0.
\end{equation*} 
The following hypotheses are assumed to hold for (\ref{system.statement}):

\begin{enumerate}[label=$(E_\arabic*$)]
	\item \label{hyp.thmhenry.thetaperiodic}  $w$, $H$, $\Theta$, and $\zeta$ are periodic in each entry of $\theta \in \R^k$, with period vector given by $$\omega =(\omega_1,\ldots,\omega_k) \in \R_+^k;$$
	\item \label{hyp.thmhenry.C2andbounded} For each $(\delta,\e) \in E(\delta_0,\e_0)$ fixed, the functions $w$, $H$, $\Theta$, and $\zeta$ are of class $C^{2}$ on $$\R \times \R^k \times \mathcal{B}_n(0,\rho_0);$$
	\item \label{hyp.thmhenry.uniformbound} $w$, $H$, $\Theta$, and $\zeta$, as well as their partial derivatives up to the second order with respect to $\theta$ and ${\bf h}$, are uniformly bounded by a constant $M>0$ on the domain $$(t,\theta,{\bf h},\delta,\e) \in \R \times \R^k \times \mathcal{B}_n(0,\rho_0) \times E(\delta_0,\e_0);$$
	\item \label{hyp.thmhenry.tangentbehavior} There are $\beta\geq0$ and $K_\theta\geq 1$ such that, for any $\theta_1, \theta_2 \in \R^k$, the inequality 
	\begin{equation*}
		\|\Upsilon(t,t_0,\theta_1) - \Upsilon(t,t_0,\theta_2)\| \leq K_\theta e^{\beta |t-t_0|} \|\theta_1 - \theta_2\|
	\end{equation*}
	holds for any $t, t_0 \in \R$;
	\item \label{hyp.thmhenry.normalbehavior}	There are $\alpha>0$ and $K_H \geq 1$ such that, for each $(t_0,\theta_0) \in \R \times \R^k$, the matrix solution $\Phi(t,s,t_0,\theta_0)$ of the non-autonomous linear differential equation $\dot {\bf v} = H(\Upsilon(t,t_0,\theta_0)) \cdot {\bf v}$ having initial condition $\Phi(s,s,t_0,\theta_0) = \Id \in \R^{n \times n}$ satisfies
	$$\|\Phi(t,s,t_0,\theta_0)\| \leq K_H e^{-\alpha (t-s)}, \quad \text{for} \; t\geq s. $$
	\item \label{hyp.thmhenry.perturbationsize} There is a continuous non-decreasing function $q:(0,\e_0) \to \R_+$ such that $q(\e) \to 0$ as $\e \to 0^+$ and, for each fixed $\e_* \in (0,\e_0)$, the following inequalities hold over $(t,\theta,\delta,\e) \in \R \times \R^k \times E(\delta_0,\e_*)$:
	\begin{enumerate}[label=\roman*.]
		\item $\|\Theta(t,\theta,0,\e)\|, \left \|\frac{\p \Theta}{\p \theta}(t,\theta,0,\e)\right\|  \leq q(\e_*),$ 
		\item $\|\zeta(t,\theta,0,\e)\|,\left \|\frac{\p \zeta}{\p \theta}(t,\theta,0,\e)\right\|, \left \|\frac{\p \zeta}{\p {\bf h}}(t,\theta,0,\e)\right\| \leq q(\e_*).$
	\end{enumerate}
\end{enumerate}

\begin{theorem}[Theorem 9.1.1, \cite{Henry1981}] \label{thm.henry}
	Suppose $\alpha>\beta$ and let $\mu>0$  be such that $\beta(1+\mu) <\alpha$. There is $\e_1 \in (0,\e_0)$ such that:
	\begin{enumerate} [label=\textit{(\alph*)}]
		\item For each $(\delta,\e) \in E(\delta_0,\e_1)$, there is a continuous function $\sigma_{\delta,\e}: \R \times \R^k \to \mathcal{B}_n(0,\rho_0)$, differentiable in its second entry, such that $\mathcal{C}_{\delta,\e}:= \{ (\theta, \sigma_{\delta,\e}(t,\theta)) :(t,\theta) \in \R \times \R^k\}$ is an invariant manifold of (\ref{system.statement});
		\item $\mathcal{C}_{\delta,\e}$ is asymptotically stable and unique in the following sense: there is $\rho_U>0$ such that any larger invariant subset containing $\mathcal{C}_{\delta,\e}$ also contains a solution for which $\|{\bf h}(t)\| \geq \rho_U$ for some $t \in \R$;
		\item $ \frac{\p \sigma_{\delta,\e}}{\p \theta}$ is uniformly Hölder continuous with exponent $\mu$;
		\item \label{thmitem.BDelta} There are continuous functions $B, \Delta: (0,\e_1] \to \R_+$ approaching $0$ as $\e \to 0^+$ such that $\|\sigma_{\delta,\e}\| \leq B(\e)$ and $\left\|\frac{\p \sigma_{\delta,\e}}{\p \theta}\right\| \leq \Delta(\e)$;
		\item \label{thmitem.thetaperiodic} $\sigma_{\delta,\e}$ is periodic in each entry of $\theta$, with the same period vector $\omega$ given in \ref{hyp.thmhenry.thetaperiodic};
		\item \label{thmitem.periodic} If $T_{\delta,\e}>0$ is such that each function appearing in (\ref{system.statement}) is $T_{\delta,\e}$-periodic in $t$, then so is 
		$\sigma_{\delta,\e}$. 		
	\end{enumerate}
\end{theorem}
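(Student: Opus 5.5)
The plan is the classical Lyapunov--Perron (integral manifold) construction in the spirit of Bogoliubov--Mitropolsky, Hale and Henry: realize $\mathcal{C}_{\delta,\e}$ as the graph of a fixed point of a contraction on a space of bounded, Lipschitz functions $\sigma:\R\times\R^k\to \mathcal{B}_n(0,\rho_0)$. The hypotheses \ref{hyp.thmhenry.tangentbehavior}--\ref{hyp.thmhenry.normalbehavior} will be used with $\Upsilon=Y$, the flow of $\dot\theta=w(\theta)$.

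\emph{Setup and operator.} Fix $\rho<\rho_0$, $\Delta>0$, and let $\mathcal{S}(\rho,\Delta)$ be the set of continuous $\sigma$ with $\|\sigma\|\leq \rho$, $\operatorname{Lip}_\theta\sigma\leq\Delta$, and $\omega$-periodic in $\theta$. Given $\sigma\in\mathcal{S}$ and $(t_0,\theta_0)$, let $\theta_\sigma(t)=\theta_\sigma(t;t_0,\theta_0)$ solve $\dot\theta = w(\theta)+\Theta(t,\theta,\sigma(t,\theta),\delta,\e)$ with $\theta(t_0)=\theta_0$. By Gronwall and \ref{hyp.thmhenry.tangentbehavior}, its $\theta_0$-dependence grows at most like $K_\theta e^{(\beta+c)|t-t_0|}$, where $c=O(q(\e)+\rho+\Delta)$ collects the bounds from \ref{hyp.thmhenry.uniformbound} and \ref{hyp.thmhenry.perturbationsize}. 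I define
\[
(\mathcal{L}\sigma)(t_0,\theta_0)=\int_{-\infty}^{t_0}\Phi_\sigma(t_0,s)\,\zeta\bigl(s,\theta_\sigma(s),\sigma(s,\theta_\sigma(s)),\delta,\e\bigr)\,ds ,
\]
with $\Phi_\sigma$ the transition matrix of $\dot{\bf v}=H(\theta_\sigma(t)){\bf v}$; equivalently, write $\dot{\bf h}=H(\theta){\bf h}+\tilde\zeta$ and keep the nonlinearity. Roughness of exponential dichotomies, applied to \ref{hyp.thmhenry.normalbehavior} along the nearby path $\theta_\sigma$, gives $\|\Phi_\sigma(t,s)\|\leq K_H' e^{-(\alpha-c')(t-s)}$. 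Hence the integral converges and $\|\mathcal{L}\sigma\|\leq K q(\e)/\alpha+K(\text{quadratic in }\rho)$. Invariance of the graph of a fixed point follows by the usual variation-of-constants argument.

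\emph{Invariance of $\mathcal{S}$ and contraction.} Differentiating in $\theta_0$, the Lipschitz constant of $\mathcal{L}\sigma$ is bounded by
\[
\int_{-\infty}^{t_0} e^{-(\alpha-c')(t_0-s)}\,e^{(\beta+c)(t_0-s)}\,\bigl(q(\e)+O(\rho)+O(\Delta)\bigr)\,ds .
\]
This is finite and small precisely because $\alpha>\beta$. So for $\e$ small one can choose $\rho=B(\e)$ and $\Delta=\Delta(\e)$ tending to $0$ with $\mathcal{L}(\mathcal{S})\subset\mathcal{S}$. In the same way $\|\mathcal{L}\sigma_1-\mathcal{L}\sigma_2\|_{C^0}\leq \tfrac12\|\sigma_1-\sigma_2\|_{C^0}$, and since $\mathcal{S}$ is closed in $C^0$, Banach's theorem yields a unique $\sigma_{\delta,\e}$. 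This gives (a) (apart from differentiability) and (d).

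\emph{Symmetries, regularity and uniqueness.} Periodicity in $\theta$ (item \ref{thmitem.thetaperiodic}) and $T_{\delta,\e}$-periodicity in $t$ (item \ref{thmitem.periodic}) follow by uniqueness: $\mathcal{L}$ commutes with the translations $\theta\mapsto\theta+\omega_i e_i$ and $t\mapsto t+T_{\delta,\e}$, so the translated fixed point is again the fixed point. For differentiability and (c), I run the fiber contraction on the formal derivative equation for $\partial_\theta\sigma$ on the space of $\mu$-Hölder linear-map-valued functions. The Hölder seminorm produces a factor $e^{(1+\mu)\beta(t_0-s)}$ against $e^{-\alpha(t_0-s)}$, which is exactly where $\beta(1+\mu)<\alpha$ is needed. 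For (b), attractivity follows from the variation-of-constants estimate: ${\bf h}(t)-\sigma(t,\theta(t))$ decays like $e^{-(\alpha-c)t}$ for initial data in $\mathcal{B}(0,\rho_U)$. Hence any invariant set strictly larger than $\mathcal{C}_{\delta,\e}$ and contained in that ball would contain a full orbit off the graph, whose backward continuation grows and leaves the ball.

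\emph{Main obstacle.} The delicate step is the $C^{1,\mu}$ regularity, i.e.\ the Hölder estimate for $\partial_\theta\sigma$, together with making every constant uniform in $(\delta,\e)\in E(\delta_0,\e_1)$. The perturbation $\Theta$ alters the base flow, so the tangential growth rate becomes $\beta+c$ and the normal rate $\alpha-c'$. I would first fix $\mu$ with slack, $\beta(1+\mu)<\alpha-\eta$, and then choose $\e_1$ so that $c,c'<\eta/4$, using only the uniform bounds of \ref{hyp.thmhenry.uniformbound} and \ref{hyp.thmhenry.perturbationsize}. This keeps the result independent of how large $1/\delta$ is.
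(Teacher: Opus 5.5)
The paper does not prove this statement itself. It imports Henry's Theorem~9.1.1 \cite{Henry1981}, which Henry proves by a Lyapunov--Perron contraction in a Banach-space setting, and comments only on how the hypotheses were specialized: uniformity in $(\delta,\e)$, the H\"older condition obtained from $C^2$ regularity plus periodicity, and the pointwise bounds $(E_6)$ replacing Henry's integral smallness. Your reconstruction follows that same method. You correctly locate where $\alpha>\beta$ and $\beta(1+\mu)<\alpha$ enter, and why only $\theta$- and $\mathbf{h}$-derivatives may be used.

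There is, however, a genuine gap in the estimate everything else rests on: the rates $\beta+c$ and $\alpha-c'$ along the perturbed base flow $\theta_\sigma$. Hypotheses $(E_4)$ and $(E_5)$ control $\partial_{\theta_0}\Upsilon$ and the transition matrix of $\dot{\mathbf{v}}=H(\Upsilon(t,t_0,\theta_0))\mathbf{v}$ only along genuine orbits of $\dot\theta=w(\theta)$. The curve $\theta_\sigma$ is merely a pseudo-orbit, with defect $\|\Theta(t,\theta,\sigma(t,\theta),\delta,\e)\|\le q+M\rho$. It need not stay close to any single true orbit on $(-\infty,t_0]$, because the defect accumulates and nearby orbits separate. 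So neither of your two justifications applies as stated:
\begin{itemize}
\item ``Gronwall and $(E_4)$'' fails because the linear system $\dot X=Dw(\theta_\sigma(t))X$ has no a priori bound $K_\theta e^{\beta|t|}$.
\item The roughness theorem fails because it needs $H(\theta_\sigma(t))$ to be uniformly close to $H$ along one orbit for all $t$.
\end{itemize}
Going through Alekseev's formula instead brings in $\partial^2_{\theta_0}\Upsilon$, which can grow like $e^{2\beta|t|}$, and the resulting inequality does not close.

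The missing idea is a windowing argument, essentially the finite-time-plus-submultiplicativity device the paper uses in the proof of \Cref{thm:GC}.
\begin{itemize}
\item On a window of length $L$, Alekseev's formula with first derivatives only gives $\|\theta_\sigma(t)-\Upsilon(t,s,\theta_\sigma(s))\|\le K_\theta L e^{\beta L}(q+M\rho)$.
\item Since $Dw$ and $H$ are Lipschitz by $(E_3)$, a finite-interval Gronwall estimate gives bounds $K_\theta e^{(\beta+\gamma)|t-s|}$ and $K_H e^{-(\alpha-\gamma)(t-s)}$ on that window, with $\gamma\to0$ as $q,\rho,\Delta\to0$ for fixed $L$.
\item Concatenating windows adds $(\ln K_\theta)/L$, respectively $(\ln K_H)/L$, to the exponents.
\item Fixing $L$ first and then $\e_1$ yields rates $\beta+\eta$ and $\alpha-\eta$ with $\eta$ as small as desired.
\end{itemize}
This is not $\beta+O(c)$, but it is exactly what the slack argument in your last paragraph uses. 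Since elements of $\mathcal{S}$ are only Lipschitz, run this for mollified $\sigma$ and pass to the limit.

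Two smaller points also need tightening.
\begin{itemize}
\item The $O(\Delta)$ term in your Lipschitz bound for $\mathcal{L}\sigma$ must carry a small coefficient, or $\mathrm{Lip}_\theta\,\mathcal{L}\sigma\le\Delta$ cannot be closed. It does: it arises from $\partial_{\mathbf{h}}\zeta\cdot\partial_\theta\sigma$ with $\|\partial_{\mathbf{h}}\zeta(t,\theta,\sigma)\|\le q+M\rho$. The non-small coupling $\partial_{\mathbf{h}}\Theta$ enters only the exponent.
\item For (b), a variation-of-constants formula for $\mathbf{h}(t)-\sigma(t,\theta(t))$ presupposes $t$-differentiability of $\sigma$, which the theorem does not provide; the paper obtains $\partial_t\sigma$ only afterwards, in \Cref{lemma.sigma.tdiff}. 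It is cleaner to compare an orbit with the solution on $\mathcal{C}_{\delta,\e}$ through $(t_1,\theta(t_1))$, and let $t_0\to-\infty$ for orbits confined to the ball.
\end{itemize}
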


The version of \Cref{thm.henry} appearing in the provided reference is much more general, allowing for the domain of the differential equation to be any Banach space, thus requiring an array of additional technical assumptions that are mostly trivial in our setting. Still, some comments comparing the original hypotheses with the ones adopted here are warranted. 

First, since \cite[Theorem 9.1.1]{Henry1981} does not consider a parameterized family of systems directly, we require that every assumption holds uniformly throughout the family in our setting---cf. \cite[Theorem 9.1.9]{Henry1981}, where stronger regularity with respect to parameters is assumed. Secondly, our assumption that all functions are of class $C^2$ and periodic in $\theta$ guarantees that, by fixing $t$, $\delta$, and $\e$, their first order derivatives with respect to the $\theta$ and ${\bf h}$ are uniformly Lipschitz continuous on the bounded domain corresponding to those variables. Hence, they are also uniformly Hölder continuous on this domain for any exponent $\mu \in (0,1]$. Finally, the assumption of ``integral-smallness'' given in the original reference is replaced here with the stronger upper bounds in \ref{hyp.thmhenry.perturbationsize}.

\subsection{Regularity of $\sigma_{\delta,\e}$ with respect to $t$}
The conclusions of \Cref{thm.henry} can only be employed as the tool to verify \ref{hyp.P-thmGC} in the case studied if the function $\sigma_{\delta,\e}$ provides an invariant manifold of class $C^1$ tending to ${\bf h} =0$ uniformly on $\delta$ as $\e \to 0^+$. However, only regularity with respect to $\theta$ is obtained directly from the theorem. Here, we show that this is sufficient, by proving that $\sigma_{\delta,\e}$ also has a continuous partial derivative with respect to $t$, which moreover goes to zero with $\e$.

\begin{lemma}\label{lemma.sigma.tdiff}
	Under the hypotheses of \Cref{thm.henry}, $\sigma_{\delta,\e}$ admits a continuous partial derivative with respect to $t$. Furthemore, this derivative satisfies the identity
	\begin{equation*} \label{eq:psigmaptidentity}
		\frac{\p \sigma_{\delta,\e}}{\p t} (t,\theta) = H(\theta) \sigma_{\delta,\e} (t,\theta) + \zeta(t,\theta,\sigma_{\delta,\e}(t,\theta),\delta,\e) - \frac{\p \sigma}{\p \theta }(t,\theta) \left[w(\theta) + \Theta (t,\theta, \sigma_{\delta,\e}(t,\theta),\delta,\e)\right].
	\end{equation*}
	In particular, $\|\sigma_{\delta,\e}\|_{C^1} \to 0$ uniformly on $\delta$ as $\e \to 0^+$.
\end{lemma}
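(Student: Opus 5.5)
The plan is to exploit invariance of the graph $\mathcal{C}_{\delta,\e}$ along trajectories, and then to turn differentiability along the flow into a genuine partial derivative in $t$ using differentiability in $\theta$. Fix $(\delta,\e)\in E(\delta_0,\e_1)$ and write $\sigma=\sigma_{\delta,\e}$, $G(t,\theta):=w(\theta)+\Theta(t,\theta,\sigma(t,\theta),\delta,\e)$ and $Z(t,\theta):=H(\theta)\sigma(t,\theta)+\zeta(t,\theta,\sigma(t,\theta),\delta,\e)$. Both $G$ and $Z$ are continuous, because $\sigma$ is continuous and the data are $C^2$ by \ref{hyp.thmhenry.C2andbounded}. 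By \Cref{thm.henry}(c), $G$ is also Lipschitz in $\theta$: $\p\sigma/\p\theta$ is bounded by \ref{thmitem.BDelta}, so $\sigma$ is Lipschitz in $\theta$.

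Consequently, for each $(t_0,\theta_0)$ the equation $\dot\theta=G(t,\theta)$, $\theta(t_0)=\theta_0$, has a unique $C^1$ solution $\theta(t)$. By invariance of $\mathcal{C}_{\delta,\e}$ (part (a) of \Cref{thm.henry}, together with uniqueness of solutions of \eqref{system.statement}), the pair $(\theta(t),\sigma(t,\theta(t)))$ is the solution of \eqref{system.statement} through $(t_0,\theta_0,\sigma(t_0,\theta_0))$. Hence $t\mapsto \sigma(t,\theta(t))$ is $C^1$ with derivative $Z(t,\theta(t))$.

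Next, write, for small $\tau$,
\begin{equation*}
\sigma(t_0+\tau,\theta_0)-\sigma(t_0,\theta_0)=\bigl[\sigma(t_0+\tau,\theta_0)-\sigma(t_0+\tau,\theta(t_0+\tau))\bigr]+\bigl[\sigma(t_0+\tau,\theta(t_0+\tau))-\sigma(t_0,\theta_0)\bigr].
\end{equation*}
The second bracket equals $\tau Z(t_0,\theta_0)+o(\tau)$. For the first, apply the mean value theorem in $\theta$ at fixed time $t_0+\tau$. Since $\theta(t_0+\tau)-\theta_0=\tau G(t_0,\theta_0)+o(\tau)$, the bracket equals $-\frac{\p\sigma}{\p\theta}(t_0+\tau,\xi)\,\tau G(t_0,\theta_0)+o(\tau)$, where the $o(\tau)$ uses the uniform bound on $\p\sigma/\p\theta$.

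The main obstacle is passing to the limit here. It requires $\frac{\p\sigma}{\p\theta}(t_0+\tau,\xi)\to\frac{\p\sigma}{\p\theta}(t_0,\theta_0)$, that is, joint continuity of $\p\sigma/\p\theta$ in $(t,\theta)$, whereas part (c) only gives Hölder continuity in $\theta$ uniform in $t$. I would first try to extract continuity in $t$ from Henry's construction: $\p\sigma/\p\theta$ is obtained as a fixed point of a contraction depending continuously on $(t,\theta)$, so it is jointly continuous. Failing that, I would use the invariance of the tangent bundle $\{(\xi,\frac{\p\sigma}{\p\theta}\xi)\}$ under the linearized flow, which transports $\p\sigma/\p\theta$ continuously along trajectories, and then combine this with uniform $\theta$-Hölder continuity by the same two-bracket splitting.

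Once this is in place, dividing by $\tau$ yields existence of $\p\sigma/\p t$ together with the stated identity. The right-hand side of the identity is continuous, so the partial derivative in $t$ is continuous.

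Finally, the $C^1$ estimate. By \ref{thmitem.BDelta}, $\|\sigma\|\le B(\e)$ and $\|\p\sigma/\p\theta\|\le\Delta(\e)$, both independent of $\delta$. In the identity, $\|H\sigma\|\le MB(\e)$ by \ref{hyp.thmhenry.uniformbound}, and $\|\frac{\p\sigma}{\p\theta}G\|\le 2M\Delta(\e)$. For $\zeta$, the mean value theorem in ${\bf h}$ gives
\begin{equation*}
\|\zeta(t,\theta,\sigma,\delta,\e)\|\le\|\zeta(t,\theta,0,\delta,\e)\|+M\|\sigma\|\le q(\e)+MB(\e),
\end{equation*}
using \ref{hyp.thmhenry.perturbationsize}. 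All of these bounds tend to $0$ as $\e\to0^+$, uniformly in $\delta$, and therefore $\|\sigma_{\delta,\e}\|_{C^1}\to0$ uniformly on $\delta$.
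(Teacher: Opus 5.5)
Your route is different from the paper's. The paper never forms a difference quotient. It inverts the restricted flow map $(t,\theta)\mapsto\phi_t(0,\theta)=\bigl(t,\theta_{\delta,\e}(t,0,\theta,\sigma_{\delta,\e}(0,\theta))\bigr)$ with the Inverse Function Theorem and writes $\sigma_{\delta,\e}(t,\theta)={\bf h}_{\delta,\e}\bigl(t,0,i_2(t,\theta),\sigma_{\delta,\e}(0,i_2(t,\theta))\bigr)$, which is a composition of $C^1$ maps. Joint $C^1$ regularity therefore comes out in one stroke, and the identity follows by differentiating the invariance relation along trajectories. That argument uses only that each slice $\sigma_{\delta,\e}(t,\cdot)$ is $C^1$.

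Your splitting into a transport term along the trajectory plus a fixed-time correction avoids the Inverse Function Theorem. However, as you correctly note, it cannot be closed without joint continuity of $\p\sigma_{\delta,\e}/\p\theta$ in $(t,\theta)$. Part (c) of \Cref{thm.henry} does not give this, and the continuity of the right-hand side of the identity needs the same fact. Your final $C^1$ estimate is the same as the paper's.

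The step you flagged is a genuine requirement. Your first fix, reading joint continuity off Henry's contraction, is not supported by \Cref{thm.henry} as stated. Your second fix is the one to carry out, and it works:
\begin{itemize}
\item Set $\Theta_t(\theta_0):=\theta_{\delta,\e}(t,t_0,\theta_0,\sigma_{\delta,\e}(t_0,\theta_0))$. Invariance gives $\sigma_{\delta,\e}(t,\Theta_t(\theta_0))={\bf h}_{\delta,\e}(t,t_0,\theta_0,\sigma_{\delta,\e}(t_0,\theta_0))$.
\item Both $\Theta_t(\theta_0)$ and this right-hand side are $C^1$ in $(t,\theta_0)$, because the full flow is $C^1$ and $\sigma_{\delta,\e}(t_0,\cdot)$ is $C^1$.
\item $D\Theta_t(\theta_0)$ is invertible, because $\Theta_t$ has the $C^1$ inverse $\theta\mapsto\theta_{\delta,\e}(t_0,t,\theta,\sigma_{\delta,\e}(t,\theta))$.
\item Differentiating in $\theta_0$ and solving exhibits $\frac{\p\sigma_{\delta,\e}}{\p\theta}(t,\Theta_t(\theta_0))$ as a jointly continuous function of $(t,\theta_0)$.
\item Composing with the continuous map $(t,\theta)\mapsto\Theta_t^{-1}(\theta)$, or using the uniform H\"older bound as you suggest, gives joint continuity of $\p\sigma_{\delta,\e}/\p\theta$.
\end{itemize}
Your limit passage then goes through.

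This is exactly the ingredient that justifies the paper's use of the Inverse Function Theorem: it shows $(t,\theta)\mapsto\phi_t(0,\theta)$ is $C^1$ with invertible derivative. Once you have it, the paper's composition argument is the shorter way to finish.
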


\begin{proof}
	Consider the restricted flow $\phi_t(\tau_0,\theta_0)$ on the invariant manifold ${\bf h} = \sigma_{\delta,\e}(\tau,\theta)$ of the autonomous version of \cref{system.statement}, i.e., the solution of the initial value problem
	\begin{equation*}
		\dot \tau =1, \quad \dot \theta = w(\theta) + \Theta(\tau,\theta,\sigma_{\delta,\e}(\tau,\theta),\delta,\e); \quad \tau(0) = \tau_0, \quad \theta(0) = \theta_0.
	\end{equation*}
	For each fixed $t \in \R$, the fact that $\phi_t$ is a flow guarantees that it is invertible. Thus, $(t,\theta) \mapsto \phi_t(0,\theta)$ must also be invertible. In fact, $\dot \tau =1$ implies that $\phi_{t_1}(0,\theta_1) = \phi_{t_2}(0,\theta_2)$ only if $t_1 = t_2$, which then implies $\theta_1 = \theta_2$ as well. Define $i(t,\theta) = (i_1(t,\theta),i_2(t,\theta)) \in \R \times \R^k$ as the inverse of $(t,\theta) \mapsto \phi_t(0,\theta)$. By the Inverse Function Theorem, $i_1$ and $i_2$ are of class $C^1$.
	
	Let $(\theta_{\delta,\e}(t,t_0,\theta_0,{\bf h_0}),{\bf h}_{\delta,\e}(t,t_0,\theta_0,{\bf h_0}))$ denote the solution of \cref{system.statement} satisfying the initial conditions $\theta_{\delta,\e}(t_0,t_0,\theta_0,{\bf h_0}) = \theta_0$ and ${\bf h}_{\delta,\e}(t_0,t_0,\theta_0,{\bf h_0}) = {\bf h_0}$. Invariance of ${\bf h} = \sigma_{\delta,\e}(t,\theta)$ with respect to \cref{system.statement} ensures that
	\begin{equation*} \label{eq:lemma.sigma.tdiff-invariance}
		{\bf h}_{\delta,\e} (t,t_0,\theta_0,\sigma_{\delta,\e}(t_0,\theta_0)) = \sigma_{\delta,\e} (t+ t_0, \theta_{\delta,\e}(t,t_0,\theta_0,\sigma_{\delta,\e}(t_0,\theta_0))),
	\end{equation*}
	for any $t,t_0 \in \R$ and $\theta_0 \in \R^k$. In particular, fixing $(t,\theta) \in \R \times \R^k$ and taking $t_0=0$ and $\theta_0 = i_2(t,\theta)$  it follows that 
	\begin{equation*}\label{eq:lemma.sigma.tdiff-invariance2}
		{\bf h}_{\delta,\e} \Big(t,0,i_2(t,\theta),\sigma_{\delta,\e}(0,i_2(t,\theta))\Big) = \sigma_{\delta,\e}\Big(t,\theta_{\delta,\e}\big(t,0,i_2(t,\theta),\sigma_{\delta,\e}(0,i_2(t,\theta))\big)\Big).
	\end{equation*} 
	Hence, considering that 
	\begin{equation*}
		\phi_t(0,\theta) = \Big(t, \theta_{\delta,\e}(t,0,\theta,\sigma_{\delta,\e}(0,\theta))\Big),
	\end{equation*}
	on account of the fact that both sides are solutions of the same initial value problem, it follows that \cref{eq:lemma.sigma.tdiff-invariance2} can be rewritten as
	\begin{equation*}\label{eq:lemma.sigma.tdiff-invariance3}
		{\bf h}_{\delta,\e} \Big(t,0,i_2(t,\theta),\sigma_{\delta,\e}(0,i_2(t,\theta))\Big) = \sigma_{\delta,\e}(t,\theta).
	\end{equation*} 
	It is thus clear that $\frac{\p \sigma_{\delta,\e}}{\p t}$ exists.
	
	We are then allowed to differentiate \cref{eq:lemma.sigma.tdiff-invariance} with respect to $t$ at $t=t_0$, yielding \cref{eq:psigmaptidentity}. Thus, taking the norm on both sides of this equation, it follows from \ref{hyp.thmhenry.C2andbounded} and \ref{hyp.thmhenry.perturbationsize} that
	\begin{equation*}
		\left \|\frac{\p \sigma_{\delta,\e}}{\p t}\right\| \leq  2 M B(\e) + q(\e) + 2M\Delta(\e),
	\end{equation*}
	which certainly tends to zero as $\e \to 0^+$. Combined with the estimates $\|\sigma_{\delta,\e}\| \leq B(\e)$ and $\left\|\frac{\p \sigma_{\delta,\e}}{\p \theta}\right\| \leq \Delta(\e)$ given by \Cref{thm.henry}, we conclude that $\|\sigma_{\delta,\e}\|_{C^1} \to 0$ uniformly on $\delta$ as $\e \to 0^+$.
\end{proof}

\section{Proof of \Cref{thm.main.wnhim}} \label{sec:proofthmA}
The proof of \Cref{thm.main.wnhim} can be summarized in four steps: first, \eqref{eq:thm.main1} is embedded in a two parameter family devised to allow the application of the method of continuation described in \Cref{sec:Continuation.General.Result}; secondly, a change of variables is applied to put this family in the form \eqref{eq:systemtransformedfinal}, which can be studied with the tools of \Cref{sec:existingresultshenry}; next, \Cref{thm.henry} is applied to obtain a family $\mathcal{C}_{\delta,\e}$ of invariant manifolds of \eqref{eq:systemtransformedfinal}; finally, returning to the original coordinates, this family of invariant manifolds becomes a family of tori $\mathcal{T}_{\delta,\e}$ satisfying \ref{hyp.P-thmGC}, so that \Cref{thm:GC} may be applied to obtain normally hyperbolic invariant tori of \eqref{eq:thm.main1}, as wished.

\subsection{The setting}
We investigate system \eqref{eq:thm.main1} assuming that hypotheses \ref{hyp.1-thm1} and \ref{hyp.2-thm1} hold. As mentioned above, before rewriting it in a form that allows the application of \Cref{thm.henry}, we must embed it in the two parameter family introduced in \Cref{sec:Continuation.General.Result}. Hence, we will study
\begin{equation} \label{eq:continuationfamilyproof}
	\dot x = \delta^\ell f(x) + \e^{\ell+1} h(t,x,\e).
\end{equation}

\subsection{Change of coordinates}

We employ an adapted coordinate system defined in a neighborhood $\mathcal{U}_0$ of the invariant torus $\mathcal{T}_0$ to write \eqref{eq:continuationfamilyproof} in a form to which \Cref{thm.henry} can be applied to obtain a family of invariant tori $\mathcal{T}_{\delta,\e}$. We will thus be able to verify hypothesis \ref{hyp.P-thmGC} of \Cref{thm:GC}, from which normal hyperbolicity of $\mathcal{T}_\e : = \mathcal{T}_{\e,\e}$ will be concluded.

The above-mentioned coordinate system comprises two sets of coordinates: the angular variables $\theta =(\theta_1,\ldots,\theta_d) \in \R^d$, essentially describing movement along $\mathcal{T}_0$, and ${\bf h} = ({\bf h}_1,\ldots,{\bf h}_{n-d}) \in \R^{n-d}$, which describe the remaining $(n-d)$ transversal directions. Periodicity will always be implied for the variables $\theta_i$ due to their angular nature. Hence, we will conveniently adopt the notation $\mathbb{T}^d$ as the topological $d$-torus, i.e., the Cartesian product of $d$ copies of $\cc^1$.

To define the coordinates $(\theta, {\bf h})$, we begin by observing that $\mathcal{T}_0$ is of class $C^3$. 
Since it is also a $d$-torus, there is a $C^{3}$ embedding $\overline{A}:\mathbb{T}^d \to \R^n$ whose image is $\mathcal{T}_0$. Hence, we obtain a $C^{3}$ function $A:  \R^d \to \R^n$ that is $1$-periodic in each of its entries whose image is $\mathcal{T}_0$. We can assume that $A$ is injective when restricted to $[0,1)^d$.

Triviality of the normal bundle of $\mathcal{T}_0$, on the other hand, ensures that there is a $C^{3}$ function $Q: \R^d \to \mathcal{L}(\R^{n-d},\R^n)$ such that, for each $\theta \in \R^d$, the operator $Q(\theta)$ is injective and satisfies
\begin{equation*} \label{eq:splittingQ}
	T_{A(\theta)} \mathcal{T}_0 \oplus \text{Range} (Q(\theta)) = \R^n.
\end{equation*}
The columns of $Q(\theta)$ are elements of a global frame on the normal bundle of $\mathcal{T}_0$, which is guaranteed to exist on account of triviality.

The function mapping the new coordinates $(\theta, {\bf h})$ to the old ones via $x = P(\theta,{\bf h})$ is defined by
\begin{equation*}
	P: (\theta, {\bf h}) \mapsto A(\theta) + Q(\theta) \cdot {\bf h}.
\end{equation*}
Evidently, $P$ is not a diffeomorphism, as each $p \in \mathcal{T}_0$ admits infinitely many representations with equivalent angular variables. However, $P$ provides a suitable coordinate system as a covering map from $\R^d \times \mathcal{B}_{n-k}(0,\rho_0)$, with $\rho_0 >0$ sufficiently small, to a neighborhood $\mathcal{U}_0$ of $\mathcal{T}_0$. This follows from the fact that $A(\theta)$ is an embedding and that the columns of $Q(\theta)$ constitute a global frame.

Lifting \eqref{eq:thm.main1} via the map $P$, we obtain
\begin{align}\label{eq:transformedpullback}
	\left[\begin{array}{c}
		\dot \theta \\
		\mathbf{\dot h}
	\end{array}\right] =\delta^\ell \left(DP(\theta,\mathbf{h})\right)^{-1} \left[ f (P(\theta,\mathbf{h})) + \frac{\e^{\ell+1}}{\delta^\ell} h (t,P(\theta,\mathbf{h}),\e)\right].
\end{align}
Considering that $(\theta,{\bf h}) \mapsto f(P(\theta,{\bf h}))$ is of class $C^3$ and $(\theta,{\bf h}) \mapsto (DP(\theta,{\bf h}))^{-1}$ is of class $C^2$, we can expand them near ${\bf h} =0$, obtaining
\begin{equation*} \label{eq:expansionsgellDP}
	\begin{aligned}
		&f (P(\theta,\mathbf{h})) = f (A(\theta)) + Df(A(\theta))\cdot Q(\theta) \cdot \mathbf{h} + R_f(\theta,\mathbf{h}), \\
		&\left(DP(\theta,\mathbf{h})\right)^{-1} = \left(	DP(\theta,0)\right)^{-1} + \frac{\p \left(DP\right)^{-1}}{\p \mathbf{h}}(\theta,0) \cdot \mathbf{h} + R_P(\theta,\mathbf{h}),
	\end{aligned} 
\end{equation*}
where $R_f(\theta,0)$, $R_P(\theta,0)$, $\frac{\p R_f}{\p \mathbf{h}}(\theta,0)$, and $\frac{\p R_P}{\p \mathbf{h}}(\theta,0)$ all vanish for any $\theta \in \R^d$. 

Since $\mathcal{T}_0$ is invariant under the flow of (\ref{eq:thm.main1}), it follows that ${\bf \dot h} =0$ if we take ${\bf h}=0$ in (\ref{eq:transformedpullback}). Hence, there is $w:\R^d \to \R^d$ such that 
\begin{align}\label{eq:defw(theta)}
	\left(DP(\theta,0)\right)^{-1} \cdot f (A(\theta)) =  \left[\begin{array}{c}
		w(\theta) \\
		0
	\end{array}\right].
\end{align}
Moreover, by definition of $P$, if $D$ denotes the Jacobian derivative, we have
\begin{equation*}
	DP(\theta,{\bf h}) = \left[DA(\theta) + \frac{\p}{\p \theta}(Q(\theta) \cdot {\bf h}) \; \Big| \; Q(\theta) \right],
\end{equation*}
where $\frac{\p}{\p \theta}(Q(\theta) \cdot {\bf h})$ is the $n \times d$ matrix function of class $C^2$ given by
\begin{equation*}
	\frac{\p}{\p \theta}(Q(\theta) \cdot {\bf h}) = \left[ \frac{\p Q}{\p \theta_1} (\theta) \cdot {\bf h} \; \; \ldots \; \; \frac{\p Q}{\p \theta_d} (\theta) \cdot {\bf h}\right].
\end{equation*}
Differentiating $DP(\theta,{\bf h})$ with respect to ${\bf h}$ yields
\begin{align*}
	\frac{\p \left(DP\right)}{\p \mathbf{h}}(\theta,0) \cdot {\bf h} = \left[\begin{array}{c|c}
		\frac{\p}{\p \theta}(Q(\theta) \cdot {\bf h})& 0
	\end{array}\right].
\end{align*}
Thus, it follows by differentiating the identity $(DP(\theta,{\bf h})) \cdot (DP(\theta, {\bf h}))^{-1} = \Id$ with respect to ${\bf h}$ at ${\bf h} = 0$ that
\begin{align*}
	\frac{\p \left(DP\right)^{-1}}{\p \mathbf{h}}(\theta,0) \cdot \mathbf{h} = - 	\left(DP(\theta,0)\right)^{-1} \cdot \left[\begin{array}{c|c}
		\frac{\p}{\p \theta}(Q(\theta) \cdot {\bf h}) & 0
	\end{array}\right] 	\cdot \left(DP(\theta,0)\right)^{-1}.
\end{align*}

Expanding the product of the two equations in \cref{eq:expansionsgellDP} and taking \cref{eq:defw(theta)} into account, we obtain $S(\theta)$, $H(\theta)$, $R_\theta(\theta,{\bf h})$, and $R_{\bf h}(\theta,{\bf h})$ such that
\begin{align} \label{eq:transformationR}
	\left(DP(\theta,\mathbf{h})\right)^{-1}\cdot f (P(\theta,\mathbf{h})) = \left[\begin{array}{c}
		w(\theta) \\
		0
	\end{array}\right] +  \left[\begin{array}{c}
		S(\theta) \cdot \mathbf{h} \\
		H(\theta) \cdot \mathbf{h} 
	\end{array}\right]+  \left[\begin{array}{c}
		R_{\theta}(\theta, \mathbf{h}) \\
		R_{{\bf h}}(\theta, \mathbf{h}),
	\end{array}\right]
\end{align}
and $R_\theta(\theta,0)$, $R_{{\bf h}}(\theta,0)$, $\frac{\p R_\theta}{\p \mathbf{h}}(\theta,0)$, and $\frac{\p R_{{\bf h}}}{\p \mathbf{h}}(\theta,0)$ all vanish for any $\theta \in \R^d$.
One can verify that $S(\theta) \cdot \mathbf{h}$ and $H(\theta) \cdot \mathbf{h}$ are respectively given by the the projection onto the first $d$ and last $n-d$ entries of
\begin{equation*}\label{eq:defSH}
	\begin{aligned}
		\left(\big(DP(\theta,0)\big)^{-1}\cdot Df(A(\theta)) \cdot Q(\theta)  -  \big(DP(\theta,0)\big)^{-1} \cdot \big(DQ(\theta) \cdot w(\theta)\big)\right)\cdot \mathbf{ h} .
	\end{aligned}
\end{equation*}

Similarly, we define the functions $Z$ and $W$ as the first $d$ and last $n-d$ entries of the product $\big(DP(\theta,\mathbf{h})\big)^{-1} \cdot h (t,P(\theta,\mathbf{h}),\e)$, i.e., 
\begin{align}\label{eq:defZW}
	\big(DP(\theta,\mathbf{h})\big)^{-1} \cdot h (t,P(\theta,\mathbf{h}),\e) = \left[\begin{array}{c}
		Z(t,\theta,\mathbf{h},\e) \\
		W(t,\theta,\mathbf{h},\e)
	\end{array}\right].
\end{align} 
We also define $\Theta$ and $\zeta$ directly by
\begin{equation*} \label{eq:defThetazeta}
	\begin{aligned}
		&\Theta(t,\theta,\mathbf{h},\delta,\e): =S(\theta) \cdot \mathbf{h}+ R_{\theta}(\theta, \mathbf{h}) + \frac{\e^{\ell+1}}{\delta^\ell} Z\left(\frac{t}{\delta^\ell},\theta,\mathbf{h},\e\right), \\
		&\zeta(t,\theta,\mathbf{h},\delta,\e): = R_{\mathbf{h}}(\theta, \mathbf{h}) + \frac{\e^{\ell+1}}{\delta^\ell} W\left(\frac{t}{\delta^\ell},\theta,\mathbf{h},\e\right),
	\end{aligned}
\end{equation*}
which are easily seen to be $\delta^\ell T$-periodic in $t$. Finally, re-scaling time $t \to \delta^\ell t$ in \cref{eq:transformedpullback}, it becomes
\begin{equation}\label{eq:systemtransformedfinal}
	\begin{aligned}
		& \dot \theta = w(\theta) + \Theta(t,\theta,\mathbf{h},\delta,\e), \\
		& \mathbf{\dot h} = H(\theta) \cdot \mathbf{h} + \zeta(t,\theta,\mathbf{h},\delta,\e),
	\end{aligned}
\end{equation}
which is in the form required for the application of \Cref{thm.henry}. In what follows, we show that it satisfies the hypotheses of this theorem. 

\subsubsection{Properties of the transformed system}

We will verify that \eqref{eq:systemtransformedfinal} satisfies the hypotheses of \Cref{thm.henry}. We start by recalling that the investigation is performed on parameter spaces of the form
\begin{equation*}
	E(\delta_0,\e_0) = \{(\delta,\e) \in (0,\delta_0) \times (0,\e_0):\e \leq \delta \}, \; \text{where} \; \delta_0>0, \; \e_0>0.
\end{equation*}
The first hypothesis, \ref{hyp.thmhenry.thetaperiodic}, follows easily from the fact that $A(\theta)$ and $Q(\theta)$ are $1$-periodic in each entry. For \ref{hyp.thmhenry.C2andbounded}, we will need the following lemma.

\begin{lemma} \label{lemma.regularity.transformedsystem}
	$w$, $R_\theta$, $R_\mathbf{ h}$, $Z$, $W$, and the functions  $(\theta,\mathbf{h}) \mapsto S(\theta) \cdot \mathbf{ h}$ and $(\theta,\mathbf{h}) \mapsto H(\theta) \cdot \mathbf{h}$ are of class $C^{2}$ for $(t,\theta,\mathbf{h},\e) \in \R \times \R^d \times \mathcal{B}_{n-d}(0,\rho_0) \times (-\e_0,\e_0)$. Moreover, $\Theta$ and $\zeta$ are of class $C^{2}$ on the domain $(t,\theta,\mathbf{h},\delta,\e) \in \R \times \R^d \times \mathcal{B}_{n-d}(0,\rho_0) \times E(\e_0,\e_0)$.
\end{lemma}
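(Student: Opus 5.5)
The plan is to track regularity through the construction, using only that $A$ and $Q$ are $C^3$, that $f$ and $h$ are $C^r$ with $r\ge 3$, and that composition, products and matrix inversion (on the open set of invertible matrices) preserve $C^k$ regularity.

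\emph{Regularity of $P$ and $DP^{-1}$.} First, $P(\theta,\mathbf{h})=A(\theta)+Q(\theta)\mathbf{h}$ is $C^3$, so $DP$ is $C^2$. Since $DP(\theta,0)=[DA(\theta)\,|\,Q(\theta)]$ is invertible and $\theta$ ranges over a compact torus, shrinking $\rho_0$ keeps $DP(\theta,\mathbf{h})$ invertible on $\R^d\times\mathcal{B}_{n-d}(0,\rho_0)$, uniformly in $\theta$ by periodicity. Inversion is smooth, so $(DP)^{-1}$ is $C^2$.

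\emph{The autonomous pieces.} The map $(\theta,\mathbf{h})\mapsto f(P(\theta,\mathbf{h}))$ is $C^3$, so the product $G(\theta,\mathbf{h}):=(DP)^{-1}f\circ P$ is $C^2$. By \eqref{eq:defw(theta)}, $w$ is the first block of $G(\theta,0)$, hence $C^2$. The linear terms $S(\theta)\mathbf{h}$ and $H(\theta)\mathbf{h}$ are the components of $\partial_{\mathbf{h}}G(\theta,0)\mathbf{h}$. A priori $\partial_{\mathbf{h}}G$ is only $C^1$, which would lose a derivative. To avoid this, use the explicit formula \eqref{eq:defSH}. It involves $DP(\theta,0)^{-1}$ ($C^2$), $Df(A(\theta))$ ($C^{r-1}\circ C^3$, hence at least $C^2$), $Q$ ($C^3$), $DQ$ ($C^2$) and $w$ ($C^2$). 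So $S\mathbf{h}$ and $H\mathbf{h}$ are $C^2$ in $(\theta,\mathbf{h})$. Then $R_\theta$ and $R_{\mathbf{h}}$, defined in \eqref{eq:transformationR} as $G$ minus these terms, are differences of $C^2$ functions and hence $C^2$.

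\emph{The non-autonomous pieces and the rescaling.} $Z$ and $W$ are the blocks of $(DP)^{-1}\,h(t,P(\theta,\mathbf{h}),\e)$, a product of a $C^2$ factor with a $C^r$ composition, so they are $C^2$ jointly in $(t,\theta,\mathbf{h},\e)$. Finally, $\Theta$ and $\zeta$ are sums of the above with
\[
(t,\theta,\mathbf{h},\delta,\e)\mapsto \frac{\e^{\ell+1}}{\delta^\ell}\,Z\!\left(\frac{t}{\delta^\ell},\theta,\mathbf{h},\e\right)
\]
and the analogous term in $W$. On $E(\e_0,\e_0)$ we have $\delta>0$, so $\delta^{-\ell}$ and $t\delta^{-\ell}$ are smooth there. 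The compositions are therefore $C^2$.

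\emph{Main obstacle.} The delicate point is the derivative count for $S$ and $H$: the naive route through $\partial_{\mathbf{h}}G$ gives only $C^1$, and the explicit formula \eqref{eq:defSH} is what restores $C^2$. This also requires $w\in C^2$, which uses the $C^3$ hypothesis on $\mathcal{T}_0$ through $DA$. The only other care needed is that $\delta\neq 0$ on $E(\e_0,\e_0)$, so the singular factor $\delta^{-\ell}$ causes no problem.
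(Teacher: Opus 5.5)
Your proposal is correct and follows the paper's proof essentially step by step. You get $w\in C^2$ from $P\in C^3$ via \eqref{eq:defw(theta)}, and $S(\theta)\cdot\mathbf{h}$, $H(\theta)\cdot\mathbf{h}$ in $C^2$ from the explicit formula. Then $R_\theta,R_{\mathbf{h}}$ follow from \eqref{eq:transformationR}, $Z,W$ from \eqref{eq:defZW}, and $\Theta,\zeta$ from $\delta\neq0$ on $E(\e_0,\e_0)$; you merely spell out details the paper leaves implicit, namely invertibility of $DP$ for small $\rho_0$ and the derivative count for each factor.
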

\begin{proof}
	$P$ is of class $C^{3}$ because $A$ and $Q$ are of class $C^3$. Hence, $w$ is of class $C^{2}$ by \cref{eq:defw(theta)}. Similarly, $(\theta,\mathbf{h}) \mapsto S(\theta) \cdot \mathbf{ h}$ and $(\theta,\mathbf{h}) \mapsto H(\theta) \cdot \mathbf{h}$ are of class $C^{2}$ by \cref{eq:defSH}. It then follows from \cref{eq:transformationR} that $R_\theta$ and $R_\mathbf{h}$ are also of class $C^{2}$. Furthermore, \cref{eq:defZW} ensures that $Z$ and $W$ are of class $C^{2}$ as well. Finally, since $(\delta,\e) \in E(\e_0,\e_0)$ excludes the possibility of $\delta=0$, \cref{eq:defThetazeta} guarantees that $\Theta$ and $\zeta$ are of class $C^{2}$ in that domain.
\end{proof}

The next lemma takes care of verifying \ref{hyp.thmhenry.uniformbound} and \ref{hyp.thmhenry.perturbationsize}.

\begin{lemma}
	Let $\rho_1 \in (0, \rho_0)$ and $\e_1 \in (0,\e_0)$. There is $M(\rho_1,\e_1)>0$ such that every function appearing in \cref{eq:systemtransformedfinal} is bounded by $M(\rho_1,\e_1)$ for $(\tau,\theta,\mathbf{h},\delta,\e) \in \R \times \R^d \times \overline{\mathcal{B}}_{n-d}(0,\rho_1) \times E(\e_0,\e_1)$. Moreover, there is $M_0(\e_1)>0$ such that $\Theta$ and $\zeta$ satisfy the inequalities in \ref{hyp.thmhenry.perturbationsize} with $q(\e) = \e M_0(\e_1)$.
\end{lemma}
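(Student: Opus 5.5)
The plan is to prove both assertions by compactness and periodicity, using the explicit definitions of $\Theta$ and $\zeta$ together with the restriction $\e\le\delta$ that defines $E(\e_0,\e_1)$.

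First, the uniform bound. The functions $w$, $H$, $(\theta,\mathbf h)\mapsto S(\theta)\mathbf h$, $R_\theta$ and $R_{\mathbf h}$ do not depend on $t$, $\delta$ or $\e$. By \Cref{lemma.regularity.transformedsystem} they are $C^2$, and they are $1$-periodic in each entry of $\theta$. They, together with their partial derivatives up to second order in $(\theta,\mathbf h)$, are therefore continuous on $[0,1]^d\times\overline{\mathcal B}_{n-d}(0,\rho_1)$, a compact set, and hence bounded on all of $\R^d\times\overline{\mathcal B}_{n-d}(0,\rho_1)$.

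The functions $Z$ and $W$ are $C^2$, $T$-periodic in $t$ and $1$-periodic in $\theta$. Their $(\theta,\mathbf h)$-derivatives up to order two are therefore bounded by their maximum on the compact set $[0,T]\times[0,1]^d\times\overline{\mathcal B}_{n-d}(0,\rho_1)\times[-\e_1,\e_1]$. Here we use $\e_1<\e_0$, so that this set lies inside the domain. Call this bound $M_1$.

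The time rescaling $t\mapsto t/\delta^\ell$ in \eqref{eq:defThetazeta} does not affect derivatives in $(\theta,\mathbf h)$. Moreover, for $(\delta,\e)\in E(\e_0,\e_1)$ we have $\e\le\delta$, so
\[
\frac{\e^{\ell+1}}{\delta^\ell}=\e\Big(\frac{\e}{\delta}\Big)^{\ell}\le\e\le\e_1 .
\]
Hence $\Theta$ and $\zeta$, together with their $(\theta,\mathbf h)$-derivatives up to order two, are bounded by the bounds for $S\mathbf h$, $R_\theta$, $R_{\mathbf h}$ plus $\e_1M_1$. Taking $M(\rho_1,\e_1)$ as the maximum of all these constants gives the first claim. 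This is the content of \ref{hyp.thmhenry.uniformbound}.

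Second, the smallness estimates in \ref{hyp.thmhenry.perturbationsize}. Evaluate at $\mathbf h=0$. The identities $R_\theta(\theta,0)=R_{\mathbf h}(\theta,0)=0$ also give $\partial_\theta R_\theta(\theta,0)=\partial_\theta R_{\mathbf h}(\theta,0)=0$. From \eqref{eq:transformationR} we also have $\partial_{\mathbf h}R_\theta(\theta,0)=\partial_{\mathbf h}R_{\mathbf h}(\theta,0)=0$. Since $S(\theta)\mathbf h$ vanishes at $\mathbf h=0$ together with its $\theta$-derivative, we get
\[
\Theta(t,\theta,0,\e)=\frac{\e^{\ell+1}}{\delta^\ell}Z\Big(\frac{t}{\delta^\ell},\theta,0,\e\Big),
\qquad
\zeta(t,\theta,0,\e)=\frac{\e^{\ell+1}}{\delta^\ell}W\Big(\frac{t}{\delta^\ell},\theta,0,\e\Big).
\]
The same formulas hold for $\partial_\theta\Theta$, $\partial_\theta\zeta$ and $\partial_{\mathbf h}\zeta$ at $\mathbf h=0$, with $Z$, $W$ replaced by the corresponding derivatives. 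The derivative $\partial_{\mathbf h}\zeta$ is the one that requires $\partial_{\mathbf h}R_{\mathbf h}(\theta,0)=0$; without it the bound would fail.

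Let $M_0(\e_1)$ be the maximum of $\|Z\|$, $\|W\|$ and their first $(\theta,\mathbf h)$-derivatives over the compact fundamental domain above. Using $\e^{\ell+1}/\delta^\ell\le\e\le\e_*$ for $(\delta,\e)\in E(\delta_0,\e_*)$, every quantity in \ref{hyp.thmhenry.perturbationsize} is bounded by $\e_*M_0(\e_1)=q(\e_*)$. The function $q$ is continuous, non-decreasing and tends to $0$ as $\e\to0^+$.

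The only delicate point is the role of the constraint $\e\le\delta$. Without it the factor $\e^{\ell+1}/\delta^\ell$ is unbounded as $\delta\to0$, and no uniform constant exists. The remaining care is to make sure that the $(\theta,\mathbf h)$-derivatives of the rescaled functions pick up no factors of $\delta^{-\ell}$, which holds because the rescaling acts only on $t$.
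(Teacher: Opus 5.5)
Your proof is correct and follows essentially the same route as the paper: periodicity plus compactness of a fundamental domain (with $\e$ restricted to $[-\e_1,\e_1]$), combined with the estimate $\e^{\ell+1}/\delta^\ell\le\e$ on $E$. You also spell out why the $\mathbf h=0$ derivatives of $S(\theta)\mathbf h$, $R_\theta$, $R_{\mathbf h}$ drop out, which the paper covers only by ``an analogous argument''. One small adjustment: take the maximum defining $M_0$ over $\mathbf h=0$ only, so that it depends on $\e_1$ alone, as the notation $M_0(\e_1)$ indicates.
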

\begin{proof}
	$Z$ and $W$ are $T$-periodic in their first entries, and periodic in each component of $\theta$. The same holds for the partial derivatives up to the second order of $Z$ and $W$ with respect to $\theta$ and ${\bf h}$. Thus, since $Z$ and $W$ are continuous, restricting $\e$ to the compact $[-\e_1,\e_1]$, it follows that there is $M_0(\e_1)> 0$ such that
	\begin{align*}
		\sum_{0\leq i_1+i_2 \leq 2} \left\|\frac{\p^{i_1+i_2}Z}{\p \theta^{i_1} \p {\bf h}^{i_2}} (t,\theta,0,\e)\right\|+\left\|\frac{\p^{i_1+i_2}W}{\p \theta^{i_1} \p {\bf h}^{i_2}} (t,\theta,0,\e)\right\| \leq M_0(\e_1),
	\end{align*}
	for $(t,\theta,\e) \in \R \times \R^d \times [-\e_1,\e_1]$. In particular, considering the definitions of $\Theta$ and $\zeta$ and the fact that $\e \leq \delta$ in $E(\e_0,\e_1)$, it follows that, for any chosen $\e_* \in (0,\e_1]$,
		\begin{align*}
				\|\Theta(t,\theta,0,\delta,\e)\|+\|\zeta(t,\theta,0,\delta,\e)\| \leq \e_* M_0(\e_1),
			\end{align*}
		for $(t,\theta,\delta,\e) \in \R \times \R^d \times E(\e_0,\e_*)$. An analogous argument establishes the same bound for the derivatives of $\Theta$ and $\zeta$ with respect to $\theta$, and of $\zeta$ with respect to ${\bf h}$, as stated in \ref{hyp.thmhenry.perturbationsize}. 
	
	Due to periodicity and compactness, a similar argument to the one above ensures the existence of $M(\rho_1,\e_1)>0$ such that
	\begin{align*}
		\sum_{0\leq i_1+i_2 \leq 2} \left\|\frac{\p^{i_1+i_2}Z}{\p \theta^{i_1} \p {\bf h}^{i_2}} (t,\theta,{\bf h},\e)\right\|+\left\|\frac{\p^{i_1+i_2}W}{\p \theta^{i_1} \p {\bf h}^{i_2}} (t,\theta,{\bf h},\e)\right\| \leq M(\rho_1,\e_1),
	\end{align*}
	for $(t,\theta,\mathbf{h},\e) \in \R \times \R^d \times \overline{\mathcal{B}}_{n-d}(0,\rho_1) \times [-\e_1,\e_1]$. In very much the same fashion, observe that $w$, $S$, $R_\theta$, $H$ and $R_\mathbf{h}$ are all continuous on $(\theta,\mathbf{h}) \in \R^d \times \overline{\mathcal{B}}_{n-d}(0,\rho_1)$ and periodic in each entry for $\theta$. Thus, they must all be uniformly bounded on this domain, and it is easy to see that this implies that $\Theta$ and $\zeta$ must be uniformly bounded on $(\tau,\theta,\mathbf{h},\delta,\e) \in \R \times \R^d \times \overline{\mathcal{B}}_{n-d}(0,\rho_1) \times E(\e_0,\e_1)$. By potentially redefining $M(\rho_1,\e_1)$, we can assume it is such a uniform bound.
\end{proof}

Reasoning as we did above but with $h \equiv 0$, one can easily verify that, in the adapted coordinate system we built, the equation $\dot x =f(x)$ becomes
\begin{equation}\label{eq:pullbackunperturbed}
	\begin{aligned}
		&\dot \theta = w(\theta) + S(\theta)\cdot \mathbf{h} + R_\theta(\theta,\mathbf{h}), \\
		&\mathbf{\dot h} = H(\theta)\cdot  \mathbf{h}	+ R_\mathbf{h}(\theta,\mathbf{h}).
	\end{aligned}
\end{equation}
Henceforth, we denote by $\phi_t(x)$ the flow of the original system $\dot x = f(x)$ and by $\psi_t(\theta,{\bf h})$ the flow of \eqref{eq:pullbackunperturbed}, in adapted coordinates. By nature of the coordinates $(\theta,\mathbf{h})$, $\mathcal{T}_0$ corresponds to the set $\mathcal{C}_0=\{(\theta,0): \theta \in \R^k\}$. Thus, the transformed equations restricted to this invariant manifold are $\dot \theta =w(\theta)$, $\mathbf{\dot h} =0$. 

As in \Cref{sec:existingresultshenry}, let $Y(t,t_0,\theta_0)$ be the solution of $\dot \theta = w(\theta)$ for which $\theta(t_0) = \theta_0$. The first variational equation of \cref{eq:pullbackunperturbed} along its solution $(Y(t,t_0,\theta_0),0,0) \in \mathcal{C}_0$ is
\begin{equation} \label{eq:firstvariationalunperturbed}
	\begin{aligned}
		&\dot u = Dw(Y(t,t_0,\theta_0))\cdot u +S(Y(t,t_0,\theta_0)) \cdot {\mathbf v} , \\
		&\dot {\mathbf v} = H(Y(t,t_0,\theta_0))\cdot {\mathbf v}, \\
	\end{aligned} 
\end{equation}
with $u$ providing the tangential direction and ${\mathbf v}$ the attracting normal direction. From now on, we follow the notation established in \ref{hyp.thmhenry.normalbehavior}, denoting the principal fundamental matrix solution of $\dot {\mathbf v} = H(Y(t,t_0,\theta_0))\cdot {\mathbf v}$ by $\Phi(t,s,t_0,\theta_0)$. 

We will show that absolute normal hyperbolicity of $\mathcal{T}_0$ implies \ref{hyp.thmhenry.tangentbehavior} and \ref{hyp.thmhenry.normalbehavior}. Recall that, by definition, the fact that $\mathcal{T}_0$ is absolutely $r_H$-normally hyperbolic is equivalent (see \Cref{sec:NH}) to there being $K_A,\, K_B>1$ and $\rho_M,\, \rho_N >0$ such that $\rho_N > r_H \rho_M$ and, for all $p \in \mathcal{T}_0$, the following hold
\begin{equation} \label{eq:ANH-conditions}
	\| A_t(p) \| \leq K_A e^{\,\rho_M |t|}, \; \forall t \in \R \quad \text{and} \quad \|B_t(p)\| \leq K_B e^{-\rho_N t }, \; \forall t>0.
\end{equation} 

Since the restricted flow $\phi_{t-t_0}|_{\mathcal{T}_0}(x_0)$ corresponds, in adapted coordinates, to $$ \psi_{t-t_0}(\theta_0,0) = (Y(t,t_0,\theta_0),0) \in \R^d \times \R^{n-d},$$
where $x_0 = P(\theta_0,0)$ and $t,\, t_0 \in \R$ can be chosen arbitrarily, the first condition in \eqref{eq:ANH-conditions} can be rewritten as 
\begin{equation} \label{eq:pYptheta0bound}
	\sup_{\theta_0 \in \R^d} \left\| \frac{\p Y}{\p \theta_0}(t,t_0, \theta_0) \right\| \leq K'_A e^{\rho_M|t-t_0|},
\end{equation}
for any $t,t_0 \in \R$, where $K'_A$ is a modified constant to take into account metric distortions introduced by the coordinate lift. In order to prove that \ref{hyp.thmhenry.tangentbehavior} holds, fix $t, \, t_0 \in \R$ and let $\theta_1, \, \theta_2 \in \R^d$ be given. Define $G_{t,t_0}:[0,1] \to \R^d$ by
\begin{equation*}
	G_{t,t_0}(s) = Y(t,t_0,\theta_2 + s(\theta_1-\theta_2)).
\end{equation*}
Then, it follows from \eqref{eq:pYptheta0bound} that
\begin{equation*}
	\|G_{t,t_0}'(s)\| = \left\|\frac{\p Y}{\p \theta_0} (t,t_0,\theta_2+s(\theta_1-\theta_2)) \cdot (\theta_1-\theta_2)\right\| \leq K'_A e^{\rho_M|t-t_0|} \|\theta_2-\theta_1\|.
\end{equation*}
Therefore, considering that $Y(t,t_0,\theta_1) = G_{t,t_0}(1)$ and $Y(t,t_0,\theta_2) = G_{t,t_0}(0)$, the Fundamental Theorem of Calculus ensures that
\begin{equation*}
	\|Y(t,t_0,\theta_1) - Y(t,t_0,\theta_2)\| \leq \int_0^1 \|G'(s)\| ds \leq K'_A e^{\rho_M|t-t_0|} \|\theta_2-\theta_1\|,
\end{equation*}
proving \ref{hyp.thmhenry.tangentbehavior}.

To prove \ref{hyp.thmhenry.normalbehavior}, we first observe that, by properties of the flow,
\begin{equation} \label{eq:Yflowproperty}
	(Y(t,t_0,\theta_0),0) = \psi_{t-t_0}(\theta_0,0) = \psi_{t-s} \circ \psi_{s-t_0}(\theta_0,0) = (Y(t,s,Y(s,t_0,\theta_0)),0).
\end{equation}
Now, if we define $(u(t,s,t_0,\theta_0),{\bf v}(t,s,t_0,\theta_0)) := D\psi_{t-s}(Y(s,t_0,\theta_0),0)$, it follows by changing order of derivatives that $(u,{\bf v})$ satisfies the differential system
\begin{equation*} \label{eq:firstvariationalunperturbedcomposition}
	\begin{aligned}
		&\dot u = Dw(Y(t,s,Y(s,t_0,\theta_0)))\cdot u +S(Y(t,s,Y(s,t_0,\theta_0))) \cdot {\mathbf v} , \\
		&\dot {\mathbf v} = H(Y(t,s,Y(s,t_0,\theta_0)))\cdot {\mathbf v}, \\
	\end{aligned} 
\end{equation*}
which, on account of \eqref{eq:Yflowproperty}, is equivalent to \eqref{eq:firstvariationalunperturbed}. Moreover, if $\Pi_{(n-d)}:\R^n \to \R^{(n-d)}$ and $\iota_{(n-d)}: \R^{(n-d)} \to \R^n$ denote, respectively, the projection and corresponding inclusion onto the last $n-d$ coordinates, it is clear that 
\begin{equation*}
	\Pi_{(n-d)} \cdot D\psi_{s-s} (Y(s,t_0,\theta_0),0) \cdot \iota_{(n-d)} = \Id \in \R^{(n-d) \times (n-d)},
\end{equation*}
so that both $\Pi_{(n-d)} \cdot D\psi_{t-s} (Y(s,t_0,\theta_0),0) \cdot \iota_{(n-d)}$ and $\Phi(t,s,t_0,\theta_0)$ are solutions of the same initial value problem. Hence, 
\begin{equation*}
	\Pi_{(n-d)} \cdot D\psi_{t-s} (Y(s,t_0,\theta_0),0) \cdot \iota_{(n-d)} = \Phi(t,s,t_0,\theta_0).
\end{equation*}

As remarked in \Cref{sec:NH}, normal hyperbolicity is independent of the choice of metric. Hence, we can choose a metric over $\mathcal{T}_0$ that considers the fibers generated by the global frame $Q(\theta)$ of the normal bundle as everywhere orthogonal to the tangent space of $\mathcal{T}_0$. In that case, the projection $\Pi$ onto the normal bundle $N$ of $\mathcal{T}_0$ corresponds to $\Pi_{(n-d)}$ in the adapted coordinates, so that $\Pi_{(n-d)} \cdot D\psi_{t-s} (Y(s,t_0,\theta_0),0) \cdot \iota_{(n-d)}$ itself corresponds to 
\begin{equation*}
	\Pi \cdot D\phi_{t-s} \big(\phi_{s-t_0}\big(P(\theta_0,0)\big)\big)\big|_{N}.
\end{equation*}
Moreover, the second condition in \eqref{eq:ANH-conditions} guarantees that, for $t \geq s$,
\begin{equation*}
	\sup_{p \in \mathcal{T}_0} \left\| \Pi \cdot D\phi_{t-s} (p)\big|_{N}\right\| \leq K_B e^{-\rho_N (t-s)}.
\end{equation*}
Thus, by modifying the constant to take into account the coordinate lift, it follows that
\begin{equation*}
	\|\Phi(t,s,t_0,\theta_0)\| \leq \sup_{\theta_0 \in \R^d} \left\|\Pi_{(n-d)} \cdot D\psi_{s-s} (Y(s,t_0,\theta_0),0) \cdot \iota_{(n-d)}\right\| \leq K'_B e^{-\rho_N(t-s)},
\end{equation*}
for $t \geq s$, proving \ref{hyp.thmhenry.normalbehavior}.

\subsection{Invariant manifolds for the adapted system}
We have proved that \eqref{eq:systemtransformedfinal} satisfies hypotheses \ref{hyp.thmhenry.thetaperiodic}---\ref{hyp.thmhenry.perturbationsize} with $\alpha = \rho_N$ and $\beta = \rho_M$. By \ref{hyp.2-thm1}, the torus $\mathcal{T}_0$ is absolutely normally hyperbolic, so that $\beta = \rho_M < \rho_N = \alpha$, and we can apply \Cref{thm.henry} with an appropriately chosen $\mu>0$ to find $\e_1 \in (0,\e_0)$, as well as $\mathcal{C}_{\delta,\e}$ and $\sigma_{\delta,\e}$ as described in the theorem.

Thus, if $(\delta,\e) \in E(\e_0,\e_1)$, the manifold $\mathcal{C}_{\delta,\e} = \{(\theta,\sigma_{\delta,\e}(t,\theta)) : t \in \R, \, \theta \in \R^d\}$ is invariant under \eqref{eq:systemtransformedfinal}. Equivalently, the manifold 
$$\mathcal{C}'_{\delta,\e} = \{(s,\theta,\sigma_{\delta,\e}(s,\theta)) : s \in \R, \, \theta \in \R^d\} \subset \R^{1+n}$$
is invariant under the flow of the autonomous differential system
\begin{equation*}
	\begin{aligned}
	& \dot s = 1, \\
	& \dot \theta = w(\theta) + \Theta(s,\theta,\mathbf{h},\delta,\e), \\
	& \mathbf{\dot h} = H(\theta) \cdot \mathbf{h} + \zeta(s,\theta,\mathbf{h},\delta,\e).
\end{aligned}
\end{equation*}
\Cref{thm.henry} guarantees that $\sigma_{\delta,\e}$ is $1$-periodic in each entry of $\theta$ and $\delta^\ell T$-periodic in its first entry. Moreover, \Cref{lemma.sigma.tdiff} ensures that $\sigma_{\delta,\e}$ is of class $C^1$ and that $\|\sigma_{\delta,\e}\|_{C^1} \to 0$ uniformly on $\delta$ as $\e \to 0^+$.

\subsection{Returning to the original coordinates and finishing the proof}

The final step of the proof is returning $(\theta,{\bf h})$ to the original coordinates $x$, after which $\mathcal{C}'_{\delta,\e}$ becomes the $C^1$ manifold
$$
\mathcal{M}_{\delta,\e} = \big\{\big(s \mod \delta^\ell T,\, A(\theta) + Q(\theta)\cdot \sigma_{\delta,\e}(s, \theta)\big): s \in \R, \, \theta \in \R^d \big\},
$$
invariant under the flow of 
\begin{equation*} \label{eq:thm.proofmain.finalde}
	\begin{aligned}
		&\dot s = 1, \quad
		&\dot x = f(x) + \frac{\e^{\ell+1}}{\delta^\ell} h\left(\frac{s}{\delta^\ell},x,\e\right).
	\end{aligned}
\end{equation*}
This return is well-behaved with respect to the covering map employed in the coordinate change on account of the established periodicity of $\sigma_{\delta,\e}$ with respect to $\theta$.

The first two hypotheses of \Cref{thm:GC} are certainly valid for \eqref{eq:thm.main1} on account of \ref{hyp.1-thm1} and \ref{hyp.2-thm1}. Furthermore, since $\sigma_{\delta,\e}$ is $C^1$ and $\|\sigma_{\delta,\e}\|_{C^1} \to 0$ uniformly on $\delta$ as $\e \to 0^+$, it follows easily that hypothesis \ref{hyp.P-thmGC} of the same theorem also holds for the family $\mathcal{M}_{\delta,\e}$ constructed above. Hence, we are justified in applying \Cref{thm:GC}, potentially having to redefine $\e_1$, to obtain, for each $\e_1 \in (0,\e_0)$ an attracting, absolutely $r_H$-normally hyperbolic, compact invariant manifold $\mathcal{N}_\e =: \mathcal{T}_\e^{d+1}$ for
\begin{equation*}
	\dot \tau =1, \quad \dot x =\e^\ell f(x) +\e^{\ell+1} h(\tau,x,\e),
\end{equation*}
defined on the quotient space $\R / ( \mathbb{Z} T) \times \R^n$.

It remains to prove that $\mathcal{N}_\e$ is a $(d+1)$-torus satisfying the statements given in \Cref{thm.main.wnhim}. This follows from the way $\mathcal{N}_\e$ is constructed in the first paragraph of the proof of \Cref{thm:GC}. In fact, it is stated therein that, by re-scaling time $t\to \delta^\ell t$ and changing variables via $s = \delta^\ell \tau $, one obtains the family $\mathcal{N}_{\delta,\e}$ from $\mathcal{M}_{\delta,\e}$. In the case we are studying, we can use the explicit expression of $\mathcal{M}_{\delta,\e}$ to obtain
$$\mathcal{N}_{\delta,\e} = \big\{\big(\tau \mod T,\, A(\theta) + Q(\theta)\cdot \sigma_{\delta,\e}(\delta^\ell \tau , \theta)\big): \tau \in \R, \, \theta \in \R^d \big\} \subset \cc^1 \times \R^{n},$$
where $\cc^1$ denotes the quotient space $\R / (T\Z)$. Periodicity of $\sigma_{\delta,\e}$ ensures that each $\mathcal{N}_{\delta,\e}$ is a $(d+1)$-torus, given in the $(\tau,x) \in \cc^1 \times \R^n$ coordinate system by the relation $x = \sigma_{\delta,\e} (\delta^\ell \tau, \theta)$.

Also following the proof of \Cref{thm:GC}, $\mathcal{N}_\e$ is defined to be $\mathcal{N}_{\e,\e}$, which must be of class $C^{r_H}$ because it is a $C^1$ invariant manifold which is $r_H$-normally hyperbolic (see, for instance, \cite[Theorem 4.1]{hirschpughshub}). This proves statement \ref{item.thm.main.wnhim.smooth} of \Cref{thm.main.wnhim}. 
 
Statements \ref{item.thm.main.wnhim.F} and \ref{item.thm.main.wnhim.periodic} follow straightforwardly from defining $\mathcal{F}_\e(\tau,\theta) : = A(\theta) +Q(\theta) \cdot \sigma_{\e,\e} (\e^\ell \tau, \theta) $, which is easily verified to be $T$-periodic in $\tau$ and $1$-periodic in each entry of $\theta$. Moreover, since $\|\sigma_{\delta,\e}\|_{C^1} \to 0$ uniformly on $\delta$ as $\e \to 0^+$, it follows that $\mathcal{F}_\e$ as just defined must converge in the $C^1$ norm to $(\tau,\theta) \mapsto A(\theta)$, proving \ref{item.thm.main.wnhim.Fconvergence}. Finally, statement \ref{item.thm.main.wnhim.Tconvergence} follows directly from \ref{item.thm.main.wnhim.Fconvergence}.

To conclude the proof of \Cref{thm.main.wnhim}, it remains only to argue that $\mathcal{T}_\e^{d+1}$ must have trivial normal bundle. This follows directly from \ref{item.thm.main.wnhim.F} and \ref{item.thm.main.wnhim.Fconvergence}. In fact, those properties guarantee that $\mathcal{T}_\e^{d+1}$ can be seen as the graph of a smooth section of the normal bundle of $\cc^1 \times \mathcal{T}_0^d$, which is trivial on account of being the pullback of the trivial normal bundle of $\mathcal{T}_0^d$ via the projection onto the second factor.

\section{Proof of Theorems \ref{thm.main.avg} and \ref{thm:app}} \label{sec:proofsBC}

\subsection{Proof of \Cref{thm.main.avg}} \label{sec:proofThB}
As noted in \Cref{sec:IntroAveraging}, there is a $T$-periodic change of variables \eqref{eq:AvgCofV} taking \eqref{system.standardform.intro} into \eqref{system.standardform.intro.avg}, which is in the form to which \Cref{thm.main.wnhim} can be applied. Also, it is easy to see that \ref{hyp.1-thm1} holds on account of periodicity in $t$ of the functions in \eqref{system.standardform.intro}. Finally, the assumption that the guiding system $\dot z = g_\ell(z)$ has an attracting $r_H$-normally hyperbolic invariant $d$-torus $\mathcal{T}_0^d$ corresponds directly to \ref{hyp.2-thm1}.

Therefore, an application of \Cref{thm.main.wnhim} guarantees the existence of a $(d+1)$-torus in the extended phase space of \eqref{system.standardform.intro.avg}, which becomes the $(d+1)$-torus $\mathcal{T}_\e^{d+1}$ of \eqref{system.standardform.intro.ext} after reverting the change of variables given by \eqref{eq:AvgCofV}. The fact that this $(d+1)$-torus is of class $C^{r_H}$ follows directly from its being $r_H$-normally hyperbolic \cite[Theorem 4.1]{hirschpughshub}, and its convergence from property \ref{item.thm.main.wnhim.Tconvergence} of \Cref{thm.main.wnhim}.

\subsection{Proof of \Cref{thm:app}}\label{sec:proofThC}

In cylindrical coordinates $(x,y,Z)=(r\cos\theta,r\sin\theta,Z)$, system \eqref{eq:3dpde} takes the form
\begin{equation*}\label{eq:3dpde2}
	\begin{cases}
		\dot \T=1+\e\,\sin(\T)\cos(\T)P(r^2,Z),\\
		\dot r=\e\, r\sin^2(\T) P(r^2,Z),\\
		\dot Z=2\,\e\, r^2\sin^2(\T) Q(r^2,Z).
	\end{cases}
\end{equation*}
Let $D=(0,\sqrt{b})\times U$. There exists $\varepsilon_0>0$ such that
$\dot \theta>0$ for all $(\theta,r,Z)\in[0,2\pi]\times\overline D$ and
$\varepsilon\in[0,\varepsilon_0]$.
Hence, $\theta$ can be used as a new time variable, yielding
\begin{equation*}\label{eq:napde}
	\begin{cases}
		\dfrac{d r}{d\T}=\dfrac{ \e\,r\sin^2(\T) P(r^2, Z)}{1+\e\,\sin(\T)\cos(\T)P(r^2, Z)}=\e\, r \sin^2(\T)P(r^2, Z)+\e^2R_1(\T,r, Z,\e),\vspace{0.1cm}\\
		\dfrac{ d Z}{d \T}=\dfrac{2\,\e\, r^2\sin^2(\T) Q(r^2,Z)}{1+\e\,\sin(\T)\cos(\T)P(r^2,Z)}=\e\,2r^2 \sin^2(\T) Q(r^2,Z)+\e^2R_2(\T,r,Z,\e).
	\end{cases}
\end{equation*}
This system is in the standard form to be studied by means of the method of averaging described in \Cref{sec:IntroAveraging}. To this end, set
\[
\begin{aligned}
	&T=2\pi,\quad t=\T,\quad \x=(r, Z),\\ &F_1(\T,\x)=\Big( r \sin^2(\T)P(r^2, Z),2r^2 \sin^2(\T) Q(r^2, Z)\Big),\text{ and}\\
	&R(\T,\x,\e)=\Big(R_1(\T,r, Z,\e),R_2(\T,r, Z,\e)\Big).
\end{aligned}
\] 

The corresponding guiding system corresponds to $\ell = 1$ and is given by
\begin{equation}\label{eq:gs}
	(r', Z')=\dfrac{1}{2\pi}\int_0^{2\pi} F_1(\T,\x)\,d\T=\left(\dfrac{1}{2}r P(r^2, Z),r^2Q(r^2, Z)\right),\quad (r, Z)\in D.
\end{equation}
Applying the change of variables $(\rho, Z)=(r^2, Z)$,
which is a diffeomorphism on $D$, \eqref{eq:gs} becomes
\begin{equation}\label{eq:gs2}
	(\rho', Z')=\left(\rho P(r, Z),\rho Q(\rho, Z)\right), \quad (\rho, Z)\in K=(0,b)\times(\alpha,\beta).
\end{equation}
Finally, by re-scaling time to divide the right-hand side
by $\rho$, we conclude that \eqref{eq:gs2},
hence also the guiding system \eqref{eq:gs}, admits an attracting $r_H$-normally
hyperbolic invariant $d$-dimensional torus of class $C^3$ and trivial normal bundle.

The proof is completed by applying \Cref{thm.main.avg}
and reverting the successive changes of variables.

\section{Normally hyperbolic invariant $d$-tori in polynomial systems} \label{sec:ToriPoly}

In 1900, David Hilbert proposed a list of 23 open problems that would shape the development of mathematics in the 20th century \cite{hilbert1900mathematische}. Although some of those problems have since been solved, some still stand, one of which is the 16th problem in the list. Part of this problem is concerned with the number of limit cycles for polynomial vector fields in the plane. We describe it briefly in this section.

Let $\pi(P,Q)$ denote the number of limit cycles of the polynomial differential system $$\dot x=P(x,y), \quad \dot y=Q(x,y)$$ in the plane. The second part of Hilbert's 16th Problem investigates whether a uniform upper bound $H(m)$ for the amount of limit cycles of planar polynomial vector fields exists as a function of their degree $m$. Formally, the question can be expressed as whether the $m$-th Hilbert number
\[
H(m):=\sup\{\pi(P,Q):\deg(P),\, \deg(Q)\leq m\}
\]
is finite. If so, its value is also to be ascertained.

Presently, only $H(1)$ is known, since linear vector fields in the plane do not admit limit cycles. For $m\geq2$, even finiteness of $H(m)$ is still uncertain. Nonetheless, attempts to solve this problem have unquestionably prompted important progress in the qualitative theory of vector fields (see, e.g., \cite{Ilyashenko02}). 

Most of the results obtained concerning $H(m)$ consist in providing asymptotic growth estimates and lower bounds for this number. In \cite{CL95}, it was proved that $H(m)$ grows as fast as $m^2\log m$, which was improved to $(m+2)^2\log (m+2)$ in \cite{HL12}. For small $m$, the best currently known lower bounds are provided in \cite{PT19}. 

As an extension of Hilbert's 16th problem, we submit the question of whether the quantity $N_r^d(m)$ defined in \Cref{sec:IntroNHIT} is finite. This problem was already formally proposed (for $r=1$) by the present authors in \cite[Section 4.2]{NP25}, but here we provide new results concerning the general case of $d$-tori, as well as an important clarification on why we should restrict our attention to $d$-tori in $(d+1)$-dimensional ambient space. We first address the latter issue in \Cref{sec:infinitetori} and then return to the results concerning $N_r^d$ in \Cref{sec:dTori}.

\subsection{Infinitely many high-codimension tori} \label{sec:infinitetori}

For polynomial differential systems of dimension $n\geq 3$, one should not expect a finite upper bound on the number of limit cycles. In fact, infinitely many attracting hyperbolic limit cycles may coexist in such systems. For instance, \cite[Section 4]{champneys1996non} constructs a three-dimensional polynomial vector field $X$ of degree $7$ possessing infinitely many attracting hyperbolic limit cycles. These limit cycles correspond to attracting normally hyperbolic $1$-tori with trivial normal bundle (see \Cref{rm:curves}). Applying \Cref{thm:app} to $X$ then produces arbitrarily many coexisting normally hyperbolic invariant $2$-tori in four-dimensional polynomial systems. Iterating this construction further via the same theorem proves that no uniform upper bound exists for the number of normally hyperbolic invariant $(n-2)$-tori in $n$-dimensional polynomial differential systems.

For tori of codimension $k>2$, i.e., $d$-tori in $\R^n$ with $k=n-d>2$, the polynomial vector field $Y_{n}(\mathbf{x},\mathbf{y}) := (X(\mathbf{x}),-\mathbf{y})$, $\mathbf{y}\in\R^{{n}-3}$, also has degree $7$ and possesses infinitely many attracting hyperbolic limit cycles. Hence, a similar application of \Cref{thm:app} to $Y_n$ establishes the following.

\begin{corollary}\label{cor:unbound}
		Let $n$ and $k$ be integers satisfying $n>k\geq 2$. Then, for each $L>0$, there exists an $n$-dimensional polynomial vector field of degree $9\cdot2^{n-k-1}-2$ for which the number of attracting normally hyperbolic codimension-$k$ invariant tori is greater than $L$. 
	\end{corollary}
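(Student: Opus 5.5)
The plan is to start from the three-dimensional degree-$7$ field $X$ of \cite[Section 4]{champneys1996non} and apply \Cref{thm:app} $n-k-1$ times. Each application raises both the ambient dimension and the torus dimension by one, so the codimension $k$ is preserved. At each stage we keep track of the degree and of the number of attracting normally hyperbolic tori.

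\textbf{Base case.} We take the base field $Y_{k+1}(\mathbf{x},\mathbf{y})=(X(\mathbf{x}),-\mathbf{y})$ on $\R^{k+1}$, with $\mathbf{y}\in\R^{k-2}$; for $k=2$ this is $X$ itself. It has degree $7$ and infinitely many attracting hyperbolic limit cycles, which are $1$-tori of codimension $k$.

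Fix $L$ and an integer $r_H\geq 3$. Choose $L+1$ of these cycles. Each is an attracting $r_H$-normally hyperbolic $1$-torus, for the following reasons:
\begin{itemize}
\item Tangential growth along a periodic orbit is bounded, so $\rho_M$ can be taken arbitrarily small.
\item Hyperbolicity, together with the extra decoupled contraction $-\mathbf{y}$, gives $\rho_N>0$.
\item Each cycle is analytic, hence of class $C^3$.
\item Each has trivial normal bundle by \Cref{rm:curves}.
\end{itemize}

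\textbf{Inductive step.} Suppose that at stage $j$ we have a polynomial field on $\R^{k+1+j}$ of degree $q_j$ with at least $L+1$ pairwise disjoint attracting $r_H$-normally hyperbolic $(1+j)$-tori. Each is of class $C^3$ (indeed $C^{r_H}$) and has trivial normal bundle.

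\begin{enumerate}
\item Since finitely many compact tori lie in a bounded set, we translate the first coordinate, $u\mapsto u+c$. This keeps the degree and puts all tori inside a region $K=(0,b)\times U$, as \Cref{thm:app} requires.
\item We apply \Cref{thm:app} to each torus separately and take $\e$ smaller than the minimum of the finitely many thresholds $\bar\e$.
\item For that single $\e$, system \eqref{eq:3dpde} has an attracting $r_H$-normally hyperbolic $(2+j)$-torus near $\mathbb{S}^1\times\mathcal{T}$ for each original torus $\mathcal{T}$. Each new torus has trivial normal bundle and is of class $C^{r_H}\subset C^3$.
\item The limits $\mathbb{S}^1\times\mathcal{T}$ are pairwise disjoint compact sets and the tori converge to them. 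Hence, after shrinking $\e$, the new tori are disjoint, so they remain $L+1>L$ distinct tori.
\end{enumerate}

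\textbf{Degree count.} If $P,Q$ have degree $q$, then $y\,P(x^2+y^2,Z)$ has degree at most $2q+1$ and $2\e y^2Q(x^2+y^2,Z)$ has degree at most $2q+2$. So $q_{j+1}=2q_j+2$, which gives $q_j+2=2^j(q_0+2)=9\cdot 2^j$. After $j=n-k-1$ steps the dimension is $n$, the tori have dimension $n-k$ (codimension $k$), and the degree is $9\cdot2^{n-k-1}-2$.

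If the degree at some stage is smaller than this value, the bound still holds, since the claim only needs degree at most $9\cdot2^{n-k-1}-2$. Alternatively, one can add a vanishing-coefficient convention; either way the stated degree bound is obtained.

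\textbf{Main obstacle.} The delicate point is keeping the iteration's hypotheses intact. One needs the $C^3$ regularity and trivial normal bundle at every stage; \Cref{thm:app}'s conclusions supply these once $r_H\geq3$. One also needs a single $\e$ that works simultaneously for all $L+1$ tori while keeping them distinct. I would handle this using the $C^1$ convergence statement of \Cref{thm.main.wnhim}(v), inherited through \Cref{thm.main.avg}, together with the finiteness of the chosen family.
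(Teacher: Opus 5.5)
Your proposal is correct and follows the paper's argument. You start from $Y_{k+1}$, apply \Cref{thm:app} $n-k-1$ times with a single sufficiently small $\e$ at each stage so that all selected tori survive, and track the degree through $q_{j+1}=2q_j+2$ with $q_0=7$. Your extra details fill in points the paper's proof leaves implicit: selecting more than $L$ cycles, translating the first coordinate so the tori lie in $(0,b)\times U$, fixing $r_H\geq 3$ so each new torus is again $C^3$ and eligible for the next application, and getting disjointness from convergence to the pairwise disjoint sets $\mathbb{S}^1\times\mathcal{T}$.
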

	\begin{proof}
		Consider the $(k+1)$-dimensional polynomial vector field $Y_{k+1}$, which has degree $7$ and admits infinitely many attracting normally hyperbolic invariant $1$-tori. Selecting $L$ of these tori, we apply \Cref{thm:app} iteratively $n-k-1$ times, choosing $\e>0$ sufficiently small so that each of the $L$ tori survive in every application of the theorem. This yields a $n$-dimensional polynomial vector field $\mathcal{P}_n$ possessing $L$ attracting normally hyperbolic invariant $(n-k)$-tori. Moreover, it follows by monitoring the increase in degree introduced by the right-hand side of \eqref{eq:3dpde} that $\mathcal{P}_n$ has degree $9\cdot 2^{n-k-1}-2$.
	\end{proof}

	\Cref{cor:unbound} establishes that, for $n>k\geq 2$, the maximal number of normally hyperbolic codimension-$k$ invariant tori in $n$-dimensional polynomial vector fields of degree $m$ is unbounded for each $m\geq 9\cdot 2^{n-k-1}-2$.  Consequently, the problem formulated in \Cref{sec:IntroNHIT} is the only general toroidal extension of Hilbert’s $16$th problem that may admit a nontrivial answer. In other words, it is only meaningful to address the problem of finiteness of the maximal number of normally hyperbolic codimension-$k$ invariant tori for polynomial differential systems as a function of their degree when $k=1$. However, whether this maximal number is already unbounded for degrees below $9\cdot 2^{n-k-1}-2$ remains an open problem.

\subsection{Codimension-$1$ normally hyperbolic invariant tori} \label{sec:dTori}

Let $\bP: \R^d \to \R^d$ be a polynomial vector field such that the differential system $\dot \bx = \bP(\bx)$ admits $L$ absolutely $r$-normally hyperbolic invariant $(d-1)$-tori in its phase space, with $r\geq3$. As $r$-normally hyperbolic tori of codimension $1$, they must each have trivial normal bundle (see \Cref{rm:codimension1}), be of class $C^r$ \cite[Theorem 4.1]{hirschpughshub}, and be strictly attracting in either forward or backward time.

Notice that \Cref{thm.main.wnhim}---and hence its corollaries, Theorems \ref{thm.main.avg} and \ref{thm:app}---remains valid if the assumption that $\mathcal{T}_0^d$ is attracting is replaced by the assumption that it is attracting in backward time, i.e., under time reversal $t \to -t$. Thus, each of those $(d-1)$-tori is subject to the application of \Cref{thm:app} and, by keeping track of the increase in degree between systems \eqref{eq:2dpde} and \eqref{eq:3dpde}, one obtains the following corollary.

\begin{corollary}\label{cor:lowerbound}
	Let $d\geq 2$ and $r\geq3$ be integers. Then, the following relationship holds for each natural number $m\geq 2$:
	\begin{equation*}\label{eq:thm}
		N^d_r(m)\geq N^{d-1}_r\left(\left\lfloor\dfrac{m}{2}\right\rfloor-1\right).
	\end{equation*}
	In particular, from \eqref{eq:ineqN1H} and the fact that $H(2)\geq 4$, we conclude that
	\[
	N^d_r( 2^{d+1}-2)\geq 4.
	\]
\end{corollary}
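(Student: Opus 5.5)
The plan is to apply \Cref{thm:app} with $n=d$ to a polynomial system in $\R^d$ that realizes (or approaches) $N^{d-1}_r(q)$, where $q=\lfloor m/2\rfloor-1$. We then track degrees.

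\textbf{Step 1: choose the seed system.} Let $\bP:\R^d\to\R^d$ have $\deg \bP\le q$, and let $\dot\bx=\bP(\bx)$ have $L$ absolutely $r$-normally hyperbolic invariant $(d-1)$-tori. Here $L\le N^{d-1}_r(q)$ is any achievable finite number. If $N^{d-1}_r(q)$ is infinite, $L$ is arbitrary and the conclusion reads $N^d_r(m)=\infty$. As remarked before the statement, each torus has trivial normal bundle (\Cref{rm:codimension1}), is of class $C^r\subset C^3$, and is attracting in forward or in backward time. Backward-attracting tori are handled by time reversal, using the remark that \Cref{thm.main.wnhim}, and hence \Cref{thm:app}, remains valid in that case.

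\textbf{Step 2: place the tori in the required region.} \Cref{thm:app} needs the tori inside $K=(0,b)\times U$, i.e.\ with positive first coordinate $u$. Since the finitely many tori are compact, I translate $u\mapsto u+c$ with $c$ large, which preserves polynomiality and degree. I write the result as $\dot u=P(u,V)$, $\dot V=Q(u,V)$, with $\deg P,\deg Q\le q$.

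\textbf{Step 3: apply \Cref{thm:app} to all tori at once.} I apply the theorem to each torus and take $\bar\e$ to be the minimum over the $L$ tori. For $\e\in(0,\bar\e)$, system \eqref{eq:3dpde} in $\R^{d+1}$ then has $L$ attracting (or repelling) $r$-normally hyperbolic $d$-tori, each converging to $\cc^1\times\mathcal{T}^{d-1}_{0,i}$. Distinctness is the step I expect to need the most care. Because the limits $\cc^1\times\mathcal{T}^{d-1}_{0,i}$ are pairwise disjoint compact sets and each $\mathcal{T}^d_{\e,i}$ converges to its limit in the Hausdorff sense, the $L$ perturbed tori are pairwise disjoint for small $\e$. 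Normal hyperbolicity of degree $r$ passes through by the statement of the theorem with $r_H=r$; the requirement $r\ge3$ matches the smoothness hypotheses of \Cref{thm.main.wnhim} and \Cref{thm.main.avg}, with $\ell=1$ and smooth right-hand sides.

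\textbf{Step 4: degree count.} In \eqref{eq:3dpde}, $y\,P(x^2+y^2,Z)$ has degree at most $1+2q$, and $y^2Q(x^2+y^2,Z)$ has degree at most $2+2q$. The new system therefore has degree at most $2q+2=2\lfloor m/2\rfloor\le m$, which gives $N^d_r(m)\ge L$. Taking the supremum over $L$ yields the inequality.

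\textbf{The particular bound.} Iterating the inequality gives $N^d_r(2^{d+1}-2)\ge N^{d-1}_r(2^d-2)\ge\dots\ge N^1_r(2)$, since $\lfloor(2^{k+1}-2)/2\rfloor-1=2^k-2$. By \eqref{eq:ineqN1H}, $N^1_r(2)\ge H(2)\ge4$. This relies on the fact, cited before \eqref{eq:ineqN1H}, that the four quadratic limit cycles can be taken hyperbolic, and hyperbolic limit cycles are $r$-normally hyperbolic for every $r$.
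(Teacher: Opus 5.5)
Your proposal is correct and follows the paper's argument. You apply \Cref{thm:app} to each of the $L$ codimension-one tori of a degree-$q$ system, using its backward-time version for repelling tori, observe that \eqref{eq:3dpde} has degree at most $2q+2=2\lfloor m/2\rfloor\le m$, and iterate down to $N^1_r(2)\ge H(2)\ge 4$; the translation $u\mapsto u+c$ and the common $\bar\e$ are useful details the paper leaves implicit. One small correction: you have not justified that the limit tori $\cc^1\times\mathcal{T}^{d-1}_{0,i}$ are pairwise disjoint, but you do not need it, since Hausdorff convergence to pairwise distinct compact limits already makes the perturbed tori distinct for small $\e$.
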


The asymptotic growth of $N_r^{d}(m)$ was analyzed in \cite[Section~4.2]{NP25}, albeit only for $N^d_1(m)$, which was represented by $N_h^d(m)$ in that reference. Essentially the same argument for general $r \in \N$ allows us to draw the following conclusion: $N_r^{d}(m)$ will grow at least as fast as $m^{d+1}$, provided that there exists a $(d+1)$-dimensional polynomial vector field of some degree $m_0>0$ possessing any number $\tau_0>0$ of absolutely $r$-normally hyperbolic invariant tori. More specifically,
\[
\liminf_{m\to\infty}\frac{N_r^{d}(m)}{m^{d+1}}
\geq
\frac{\tau_0}{\bigl(d+m_0\bigr)^{d+1}}.
\]
Taking Corollary~\ref{cor:lowerbound} and \eqref{eq:relationNrNr'} into account, we can now guarantee this asymptotic growth under no further assumptions.

\begin{corollary}
	For any $r \geq1$ and $d\geq2$, the quantity $N_r^{d}(m)$ grows at least as fast as $m^{d+1}$. More precisely,
	\[
	\liminf_{m\to\infty}\frac{N_r^{d}(m)}{m^{d+1}}
	\geq
	\frac{4}{\bigl(2^{d+1}+d-2\bigr)^{d+1}}.
	\]
\end{corollary}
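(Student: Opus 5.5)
The plan is to combine the asymptotic statement quoted just before the corollary (the analogue of \cite[Section~4.2]{NP25}) with Corollary~\ref{cor:lowerbound}, and then pass from $r\geq 3$ to arbitrary $r\geq1$ using \eqref{eq:relationNrNr'}. The quoted statement says that if some $(d+1)$-dimensional polynomial vector field of degree $m_0$ has $\tau_0$ absolutely $r$-normally hyperbolic codimension-one invariant $d$-tori, then
\[
\liminf_{m\to\infty}\frac{N_r^{d}(m)}{m^{d+1}}\geq\frac{\tau_0}{(d+m_0)^{d+1}}.
\]
So it suffices to exhibit such a seed with $\tau_0=4$ and $m_0=2^{d+1}-2$, since then $d+m_0=2^{d+1}+d-2$, which is exactly the stated denominator.

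First fix $r\geq3$. By Corollary~\ref{cor:lowerbound}, $N^d_r(2^{d+1}-2)\geq 4$, so there is a polynomial field $\bP$ on $\R^{d+1}$ with $\deg \bP\leq 2^{d+1}-2$ and at least four $r$-normally hyperbolic invariant $d$-tori. This is the seed with $\tau_0=4$ and $m_0\le 2^{d+1}-2$. Because the right-hand side $\tau_0/(d+m_0)^{d+1}$ decreases in $m_0$, we may take $m_0=2^{d+1}-2$, and the quoted bound gives the claim for every $r\geq3$.

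For completeness I would recall why the quoted bound holds, since it is the actual content. The idea is to place disjoint rescaled copies of the seed in a lattice of boxes, using products of Chebyshev-type or periodic-in-each-coordinate polynomial constructions as in \cite{NP25}. Roughly $(m/(d+m_0))^{d+1}$ copies then fit in degree $m$, and normal hyperbolicity is preserved by persistence (Theorem~\ref{thm:fenichelper}), together with the fact that affine rescalings preserve absolute normal hyperbolicity. I would take this from the cited argument rather than redo it.

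Finally, let $1\leq r<3$. Then \eqref{eq:relationNrNr'} gives $N^d_r(m)\geq N^d_3(m)$ for every $m$. Dividing by $m^{d+1}$ and taking $\liminf$ transfers the bound from $r=3$, which completes the argument.

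The main obstacle is the quoted ``essentially the same argument'' extension of \cite{NP25} from $r=1$ to general $r$. Specifically, one must check that the gluing and perturbation step keeps each of the $\tau_0$ copies \emph{absolutely} $r$-normally hyperbolic, not just $1$-normally hyperbolic. I would handle this by noting that the perturbation in that construction is $C^1$-small near each torus. Persistence then applies to the relatively $r$-normally hyperbolic tori, and the continuity estimates of the type in Lemmas~\ref{lemma.Bestimate} and~\ref{lemma.Aestimate} (finite-time comparison plus submultiplicativity) keep the exponents satisfying $\tilde\rho_N>r\tilde\rho_M$. Lemma~\ref{lemma.verifyabsNH} then gives absolute $r$-normal hyperbolicity.
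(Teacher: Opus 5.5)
Your proposal is correct and matches the paper's argument. The paper likewise feeds the seed $N^d_r(2^{d+1}-2)\geq 4$ from Corollary~\ref{cor:lowerbound} (valid for $r\geq 3$) into the quoted bound $\tau_0/(d+m_0)^{d+1}$ with $d+m_0=2^{d+1}+d-2$, takes the $r$-version of the \cite{NP25} growth bound as given (``essentially the same argument''), and uses \eqref{eq:relationNrNr'} to extend to all $r\geq 1$; your closing discussion of how that construction preserves absolute $r$-normal hyperbolicity goes beyond what the corollary needs.
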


\section*{Funding Information}

DDN was partially supported by the São Paulo Research Foundation (FAPESP), grants 2024/15612-6 and 2026/03312-3; by the Conselho Nacional de Desenvolvimento Científico e Tecnológico (CNPq), grant 301878/2025-0; and by the Coordenação de Aperfeiçoamento de Pessoal de Nível Superior - Brasil (CAPES), through the MATH-AmSud program, grant 88881.179491/2025-01. PCCRP was supported by the S\~{a}o Paulo Research Foundation (FAPESP) grant 2026/04931-9.

\bibliographystyle{abbrv}
\bibliography{references}

\end{document}